\renewcommand{\setminus}{{\smallsetminus}}
\newcommand\blfootnote[1]{%
  \begingroup
  \renewcommand\thefootnote{}\footnote{#1}%
  \addtocounter{footnote}{-1}%
  \endgroup
}
\newcommand{\bp}{\begin{pmatrix}}
\newcommand{\ep}{\end{pmatrix}}
\newcommand{\be}{\begin{equation}}
\newcommand{\ee}{\end{equation}}
\newcommand{\ba}{\begin{array}}
\newcommand{\ea}{\end{array}}
\DeclareMathOperator{\PD}{PD}
\DeclareMathOperator{\Hom}{Hom}
\DeclareMathOperator\Ab{Ab}
\DeclareMathOperator\im{im}
\DeclareMathOperator\Id{Id}
\DeclareMathOperator\Aut{Aut}
\DeclareMathOperator\ev{ev}
\DeclareMathOperator\lk{lk}
\DeclareMathOperator\LM{LM}
\DeclareMathOperator\BL{Bl}
\DeclareMathOperator\SL{ALM}
\DeclareMathOperator\ESL{EASL}
\DeclareMathOperator\rit{right}
\DeclareMathOperator\lef{left}
\declaretheorem[
style=plain,
name=Theorem,
numberwithin=section
]{theo}
\declaretheorem[
style=plain,
name=Proposition,
numberlike=theo
]{prop}
\declaretheorem[
style=plain,
name=Lemma,
numberlike=theo
]{lem}
\declaretheorem[
style=plain,
name=Corollary,
numberlike=theo
]{cor}
\declaretheorem[
style=definition,
name=Definition,
numberlike=theo
]{defn}
\declaretheorem[
style=definition,
name=Example,
numberlike=theo
]{exam}
\declaretheorem[
style=definition,
name=Remark,
numberlike=theo
]{rem}
\declaretheorem[
style=definition,
name=Construction,
numberlike=theo
]{con}
\declaretheorem[
style=definition,
name=Claim,
numberlike=theo
]{cla}
\numberwithin{equation}{section}
\title{Three-component link homotopy}
\author{scott Stirling}
\begin{document}
\begin{abstract}
    In 2019, Schneidermann and Teicher showed that the Kirk invariant classifies two-component link maps of two-spheres in the four-sphere up to link homotopy. In this paper, we
construct a three-component link homotopy invariant.  We construct two link maps
where each component has the same image, and apply our invariant to prove that nevertheless they are not link homotopic.
We develop tools to help distinguish between three-component link maps. We then construct a similar
invariant for three-component annular link maps. Towards the end of the paper we discuss how
 to generalise to an $n$-component link map invariant.
\end{abstract}

\maketitle

\section{Introduction}
A continuous map \blfootnote{2020 \emph{Mathematics Subject Classification}. 57N35, 57K45,  57N35}\blfootnote{e-mail: scottspeirsstirling@gmail.com}

\[
f=f_1\sqcup\cdots\sqcup f_n:\coprod_{i=1}^nS^{2}\rightarrow S^4,
\]
 which keeps disjoint components in the domain disjoint in the image, i.e $f_i(S^2)\cap f_j(S^2)=\phi$, is called a \emph{link map}. We consider link maps up to \emph{link homotopy}, i.e. a homotopy through link maps. 

The classical study of link homotopy started with Milnor in his Bachelor's thesis \cite{LGM}. He made use of a quotient of the fundamental group of the complement of three disjoint circles in $S^3$, which we now call the Milnor group. Using this group Milnor  classified three-component links in $S^3$ up to link homotopy. He did this by constructing a triple linking number which measures linking behaviour for links of three or more components. This number is only well defined modulo the greatest common divisor of the linking numbers between each pair of components. The triple linking number, along with the linking number of each two component sublink, classifies three-component links up to link homotopy. 

Since then, higher dimensions have been studied and the study of link maps of $S^2$ inside $S^4$ began with Fenn and Rolfsen who showed there exists a link homotopically non-trivial link map $f:S^2\coprod S^2~\rightarrow~S^4$ which contains a self-intersection on each sphere \cite{FR}. Self-intersections are a necessary condition for generic smooth link maps of $S^2$ in $S^4$ to be non-trivial \cite{BT}.

In 1988, Paul Kirk defined a two-component link map invariant
\[
\sigma:\LM_{2,2}^4\rightarrow \mathbb{Z}\left[\mathbb{Z}\right]\oplus\mathbb{Z}\left[\mathbb{Z}\right].
\]

 Schneiderman and Teichner showed that the Kirk invariant classifies two-component link maps up to link homotopy \cite{ST}, computing the group $\LM_{2,2}^4$. Hence, the Kirk invariant is analogous to the linking number in the classical two-component case. 
 
 To generalise the Kirk invariant we will use $\mathbb{Z}[F/F_3]$ in place of $\mathbb{Z}[\mathbb{Z}]$. The group $F/F_3$ is the third lower central series quotient of the free group on two generators - $F_3$ is the third lower central series subgroup. This is also the free Milnor group on two generators as proven in Lemma \ref{MF3}. The group $F/F_3$ admits the presentation
\[
F/F_3\cong\left<y,z, s\mid \left[z,s\right],\left[y,s\right], \left[y,z\right]s^{-1}\right>\text{,}
\]
as shown by subsection \ref{ff3}.
It can also be described as the following central extension
\[
\begin{tikzcd}
0\arrow[r]&\mathbb{Z}\arrow[r]&F/F_3\arrow[r]&\mathbb{Z}^2\arrow[r]&0\text{,}
\end{tikzcd}
\]
where the inclusion maps $1$ to $[y,z]$.
 In this article, we construct a function 
 \[
 \widetilde{\sigma}^3:\LM_{2,2,2}^4\rightarrow \widetilde{K}\text{,}
\]
where $\widetilde{K}:=\left(\mathbb{Z}[F/F_3]\right)^3/\sim$ for some choice of equivalence relation, specified in Section \ref{chap5}. Unlike the Kirk invariant, $\widetilde{\sigma}^3$ is not a group homomorphism as  $\LM_{2,2,2}^4$ cannot be turned into a group using connect sum as the operation is not well defined up to link homotopy, see Proposition~\ref{introconnect}.
This invariant is constructed by first considering an invariant of based three-component link maps up to link homotopy, which takes values in $\left(\mathbb{Z}\left[F/F_3\right]\right)^3$. These values are given by taking each component and computing the self-intersection number in the complement of the other two components. We then consider the effect of changing the basings of a link map on the invariant and determine the finest equivalence relation required to define an unbased invariant. The effect of changing the basing path transforms the group elements by multiplying group elements by
 the power of a commutator term. 

 In $\left(\mathbb{Z}\left[F/F_3\right]\right)^3$, the group our based link map invariant takes values in, the group generators we choose in each factor are different. In the first component we use the generators $y, z$ and $s=\left[y,z\right]$; in the second factor we have generators $z, x$ and $t=\left[z,x\right]$; and in the third factor we use the generators $x,y$ and $u=\left[x,y\right]$. In each of these, $x, y$ and $z$ represent meridians of the first, second, and third sphere respectively. 
Using $\widetilde{\sigma}^3$ we prove the following.
\begin{theo}\label{introthm}
There exists a three-component link map $f$ with a choice of basing path such that
\[
\widetilde{\sigma}^3(f)=\left(z\left(s-1\right)+z^{-1}\left(s^{-1}-1\right),\, 0,\, x\left(1-u\right)+x^{-1}\left(1-u^{-1}\right)\right).
\]
Furthermore, removal of any component gives a trivial two-component link map. But $f$ is not link homotopically trivial.
\end{theo}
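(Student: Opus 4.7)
The plan is to exhibit $f$ explicitly, compute $\widetilde{\sigma}^3(f)$ directly from the construction, verify the sublink statement via the Kirk invariant, and finally show the resulting tuple survives the passage to $\widetilde{K}$.

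For the construction, I would start from the standard unknotted, unlinked three-component link map $f_0 = f_1^0 \sqcup f_2^0 \sqcup f_3^0$ of $2$-spheres in $S^4$ and perform two Kirk-type finger moves. On $f_1^0$, I push a small $2$-disc across $f_3^0$ to create a pair of oppositely-signed self-intersections, and arrange the pushing arc in $S^4 \setminus (f_2^0 \cup f_3^0)$ so that it traverses a meridian of $f_2^0$ once between the two sheets. Reading sheet loops in $F/F_3 = \langle y, z, s \mid [z,s], [y,s], [y,z]s^{-1}\rangle$, the positive double point then contributes loops $z$ and $zs$, while the negative contributes $z^{-1}$ and $z^{-1}s^{-1}$; the commutator factor $s$ appears because the finger detours around $f_2^0$. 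A mirrored construction on $f_3^0$ (finger across $f_1^0$, detour around $f_2^0$) produces the contribution $x(1-u) + x^{-1}(1-u^{-1})$ in the third coordinate, while $f_2^0$ is left untouched, yielding $0$ in the middle. Reading off signed sheet-loop contributions per the definition of $\widetilde{\sigma}^3$ in Section~\ref{chap5} then gives exactly the claimed tuple.

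For the sublink statement, deleting component $k$ removes the $k$th factor of $\widetilde{\sigma}^3$ and, on each surviving factor, induces the augmentation $\mathbb{Z}[F/F_3] \to \mathbb{Z}[\mathbb{Z}]$ sending the meridian of the removed sphere to $1$; the image is the Kirk invariant of the remaining $2$-sublink. Inspection: deleting sphere $2$ sends $y \mapsto 1$, killing both $s = [y,z]$ and $u = [x,y]$, so the first and third factors collapse to $0$; deleting sphere $1$ sends $x \mapsto 1$, killing $u$ and hence the third factor; deleting sphere $3$ sends $z \mapsto 1$, killing $s$ and hence the first factor. Thus every $2$-component sublink has vanishing Kirk invariant, and by the Schneiderman--Teichner theorem \cite{ST} each is link homotopically trivial.

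The main obstacle is showing $[\widetilde{\sigma}^3(f)] \neq 0 \in \widetilde{K}$, since link-homotopy invariance of $\widetilde{\sigma}^3$ (established in Section~\ref{chap5}) will then force $f$ itself to be non-trivial. The equivalence relation $\sim$ is generated by basing-path changes, which, as described in the introduction, multiply each factor by powers of a commutator, and so modify the first (resp.\ third) coordinate only by elements of the ideal generated by $s-1$ (resp.\ $u-1$). My strategy is to detect the class by a $\mathbb{Z}$-linear functional on the first coordinate that reads, say, the coefficient of $zs$ minus that of $z$, combined with the analogous functional on the third coordinate, and then to verify that these descend to well-defined nonzero functionals on $\widetilde{K}$ by checking that no basing-change correction can cancel against them. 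Making this compatibility check precise, which requires carefully unpacking the definition of $\sim$ from Section~\ref{chap5} and ruling out any basing-change that produces matched cancellations, is where the real technical work lies.
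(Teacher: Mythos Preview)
The construction has a genuine gap. If ``push a small $2$-disc across $f_3^0$'' means pushing a finger of $f_1$ through $f_3$, you create intersections $f_1 \cap f_3$, violating the link-map condition. If instead you intend a self-finger move on $f_1$ along a guiding arc in $S^4 \setminus (f_2^0 \cup f_3^0)$ (as your parenthetical suggests), then this is a homotopy of $f_1$ inside the fixed ambient manifold $S^4 \setminus (f_2^0 \cup f_3^0)$, so $\lambda(f_1,f_1)$ is unchanged: the two new double points carry opposite signs and the \emph{same} group element (the class of the guiding arc), hence cancel in $\mu$. Starting from the unknotted, unlinked configuration, any sequence of self-finger moves therefore leaves $\sigma^3$ identically zero. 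Relatedly, your statement that ``the positive double point contributes loops $z$ and $zs$'' is not how $\mu$ (or $\lambda$) works: a single double point contributes one group element $g$ to $\mu$ (and $g^{-1}$ to $\bar\mu$), not two independent loops.

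The paper builds $f$ quite differently, via a three-component JK-type construction: start from the specific link $L \subset S^3 \times \{\tfrac12\}$ in Figure~\ref{fig:Pic1} and trace null-homotopies of the first and third components forward and backward in the $I$-direction, then cap off. The self-intersections arise from crossing changes during those null-homotopies, and their group elements are read off from the Wirtinger presentation of $L$; the commutator $s=[y,z]$ appears because the relevant crossing change on the first component occurs after a strand has been dragged under the second. This is not a link homotopy from the trivial link map---the resulting $f$ lies in a genuinely different link-homotopy class.

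Your sublink argument via the maps $p_i$ and Schneiderman--Teichner matches the paper and is correct. For non-triviality in $\widetilde{K}$ you are working harder than necessary: by Definition~\ref{action} the $\mathbb{Z}^6$-action only shifts the exponent $j$ in each monomial $a_{ijk}\,y^i s^j z^k$, preserving every coefficient $a_{ijk}$ and every pair $(i,k)$. In particular the orbit of $(0,0,0)$ is the singleton $\{(0,0,0)\}$, so any nonzero representative already gives a nonzero class---no auxiliary functional is needed.
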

This shows that the invariant can detect linking information which only occurs in three or more components. Using Theorem \ref{introthm} we prove the following 
\begin{theo}
For each $n\geq3$ there exists link maps $f=f_1\sqcup\cdots\sqcup f_n$ and $f^\prime=f_1^\prime\sqcup\cdots\sqcup f_n^\prime$ such that for every $i$, $f_i(S^2)=f_i^\prime(S^2)$, but $f$ and $f^\prime$ are not link homotopic.
\end{theo}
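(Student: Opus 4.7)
The plan is to deduce both claims from Theorem~\ref{introthm}. For $n=3$, I would take the link map $f$ produced there and construct a second link map $f'$ with the same component images but a different value of the invariant $\widetilde{\sigma}^3$. For $n \geq 4$, I would append identical trivial components to both link maps and reduce to the $n=3$ case by restriction.

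For $n=3$: let $f=f_1\sqcup f_2\sqcup f_3$ be the link map of Theorem~\ref{introthm}. Choose a basepoint-preserving orientation-reversing self-homeomorphism $\rho:S^2\to S^2$ and set $f' := f_1 \sqcup f_2 \sqcup (f_3\circ\rho)$. Then $f_i'(S^2)=f_i(S^2)$ set-theoretically for each $i$, so the component images agree on the nose. The step to verify is that $\widetilde{\sigma}^3(f')\neq \widetilde{\sigma}^3(f)$ in $\widetilde{K}$. Precomposition by $\rho$ interchanges the two preimages of each self-intersection point of $f_3$ and reverses the orientations of the local sheets; tracking these effects on the equivariant self-intersection count shows that the third coordinate of the invariant is altered by an involution, while the first and second coordinates are unchanged. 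One then checks by direct comparison that this alteration is not absorbed by the equivalence relation $\sim$, using the explicit form of the element $x(1-u)+x^{-1}(1-u^{-1})$ that appears in $\widetilde{\sigma}^3(f)$.

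For $n\geq 4$: choose $n-3$ pairwise unlinked, standardly embedded $2$-spheres $e_4,\ldots,e_n$ inside a small ball of $S^4$ disjoint from the image of $f$ (equivalently of $f'$), and consider the $n$-component link maps
\[
F := f\sqcup e_4\sqcup\cdots\sqcup e_n,\qquad F' := f'\sqcup e_4\sqcup\cdots\sqcup e_n.
\]
The component images of $F$ and $F'$ agree by construction. If $F$ and $F'$ were link homotopic, restricting the homotopy to the first three components would produce a link homotopy between $f$ and $f'$, since disjointness of those three components is automatically preserved. This contradicts the $n=3$ case.

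The main obstacle is showing that the modification of the third coordinate of $\widetilde{\sigma}^3$ induced by $\rho$ really lands in a different class of $\widetilde{K}$. This requires unpacking the equivalence relation $\sim$ defined in Section~\ref{chap5} and verifying, on the explicit formula from Theorem~\ref{introthm}, that the involution-type transformation of $x(1-u)+x^{-1}(1-u^{-1})$ cannot be realised by any allowed change of basing.
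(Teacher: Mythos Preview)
Your $n\geq 4$ reduction by appending trivial components and restricting a putative link homotopy is fine and matches the paper's (implicit) argument. The problem is the $n=3$ step: reflecting the \emph{third} component leaves $\widetilde\sigma^3$ unchanged, so your $f'$ cannot be distinguished from $f$ by this invariant.

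The third coordinate $\sigma_z$ is $M\lambda(f_3,f_3)$, computed in $\Gamma_z$ whose generators are the meridians $x,y$ of the \emph{other} two spheres. Precomposing $f_3$ with $\rho$ replaces $[f_3]$ by $-[f_3]$ in $\pi_2$, and $\lambda(-f_3,-f_3)=\lambda(f_3,f_3)$ by sesquilinearity; since $x,y$ are unaffected, the identification with $\Gamma_z$ is unchanged as well. Hence $\sigma_z(f')=\sigma_z(f)$, contrary to your claim. (Precomposition by $\rho$ does not ``interchange the two preimages'', and reversing both local sheets at a double point leaves the sign unchanged.) The only genuine effect of reflecting $f_3$ is on the identifications with $\Gamma_x$ and $\Gamma_y$, where the meridian $z$ becomes $z^{-1}$ and so $s\mapsto s^{-1}$, $t\mapsto t^{-1}$. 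But $\sigma_y=0$, and $\sigma_x(f)=z(s-1)+z^{-1}(s^{-1}-1)$ is invariant under $z\mapsto z^{-1},\, s\mapsto s^{-1}$. Thus $\sigma^3(f')=\sigma^3(f)$ and your argument collapses.

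The paper reflects the \emph{second} (embedded) component instead. This keeps $\sigma_y=0$ but sends $y\mapsto y^{-1}$, hence $s\mapsto s^{-1}$ in $\Gamma_x$ and $u\mapsto u^{-1}$ in $\Gamma_z$, with $z$ and $x$ fixed; now $\sigma_x$ and $\sigma_z$ genuinely change, and one verifies directly that the resulting triple lies outside the $\mathbb{Z}^6$-orbit of $\sigma^3(f)$. The general principle is that reflecting the $i$-th sphere never alters $\sigma_i$; it only inverts the $i$-th meridian inside the remaining $\Gamma_j$. So one must reflect a component whose meridian enters some other coordinate asymmetrically --- here, the second.
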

This is done by precomposing the link map in Theorem \ref{introthm} with a map which reflects the second sphere, reversing the orientation on the second sphere. Both maps evaluate to distinct elements of $\widetilde{K}$.

This new invariant fits into a commutative diagram for each $i\in \{1,2,3\}$
 \[
 \begin{tikzcd}
 \LM_{2,2,2}\arrow[d, "\widetilde{\sigma}^3"]\arrow[r, "i"]&\LM_{2,2}^4\arrow[d, "\sigma"]\\
\widetilde{K}\arrow[r, "p_i"]&\left(\mathbb{Z}\left[\mathbb{Z}\right]\right)^2\text{,}
 \end{tikzcd}
 \]
 where $i$ is the map which forgets the $i$th component; $\sigma$ is the Kirk invariant and the lower horizontal map, $p_i$, projects onto the other two components, and sets the $i$th meridian to $1$. Hence, $\widetilde{\sigma}^3$ also contains all the information of the Kirk invariant for each two-component sublink. 

Since $\widetilde{\sigma}^3$ takes values in an orbit space, it can be difficult to tell whether one has two representatives of the same value in $\widetilde{K}$ or whether you have two distinct values. To resolve this, we construct invariants to help differentiate the different orbits of  $\widetilde{K}$.
Consider
\[
v=\bigg(\sum_{(i,j,k)\in\mathbb{Z}^3}a^v_{ijk}y^is^jz^k,\, \sum_{(i,j,k)\in\mathbb{Z}^3}b^v_{ijk}z^it^jx^k,\,\sum_{(i,j,k)\in\mathbb{Z}^3}c^v_{ijk}x^iu^jy^k\bigg)
\]
and
\[
w=\bigg(\sum_{(i,j,k)\in\mathbb{Z}^3}a^w_{ijk}y^is^jz^k,\, \sum_{(i,j,k)\in\mathbb{Z}^3}b^w_{ijk}z^it^jx^k,\,\sum_{(i,j,k)\in\mathbb{Z}^3}c^w_{ijk}x^iu^jy^k\bigg)\\
\]
where we have used a normal form of $F/F_3$ to describe each element of $\mathbb{Z}\left[F/F_3\right]$.

Then we show the following theorem holds.
\begin{theo}\label{introthm2}
Suppose $v$ and $w$ represent the same element of $\widetilde{K}$. Then for each $a_{ijk}^v$ there exists an $a^w_{lmp}$ such that $a^v_{ijk}=a^w_{lmp}$, $i=l$, $k=p$ and 
\[
j\equiv m\mod \gcd(i,k)\text{.}
\]
\end{theo}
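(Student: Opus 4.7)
The plan is to unpack how the equivalence relation $\sim$ on $(\mathbb{Z}[F/F_3])^3$ from Section \ref{chap5} acts on the coefficients in the normal form $y^i s^j z^k$. According to that construction, $\sim$ is generated by conjugation in each factor separately by elements of the corresponding Milnor group (because changing basing paths multiplies group elements by commutator terms, i.e.\ conjugates them). In particular, the first coordinate is acted on by conjugation by arbitrary $h \in F/F_3 = \langle y, z \rangle / F_3$, and this action is independent of what happens in the other two coordinates. Thus it suffices to verify the claim on a single coordinate under a single conjugation, then iterate and apply the analogous result to the other two coordinates by relabelling generators.

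Fix $h = y^a z^b s^c \in F/F_3$ and $g = y^i s^j z^k$. The key calculation uses that $s = [y,z]$ is central together with the commutation relation $zy = s^{-1} y z$, from which induction gives $z^b y^i = s^{-bi} y^i z^b$ and $z^b y^{-a} = y^{-a} s^{ab} z^b$. Computing $h^{-1} = y^{-a} z^{-b} s^{-c-ab}$ in normal form and then multiplying out $hg$ followed by $h^{-1}$, a direct calculation yields
\[
h g h^{-1} = y^i \, s^{\,j + ak - bi} \, z^k .
\]
Note that the exponent $c$ of $s$ in $h$ drops out, since $s$ is central; only the abelianisation of $h$ contributes.

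Applying this term by term to $v = \sum a^v_{ijk} y^i s^j z^k$ gives $h v h^{-1} = \sum a^v_{ijk} y^i s^{\,j + ak - bi} z^k$. Hence if $w = h v h^{-1}$, reindexing by $l = i$, $p = k$, $m = j + ak - bi$ shows that $a^w_{lmp} = a^v_{ijk}$ with $i = l$, $k = p$ and $m - j = ak - bi \in \gcd(i,k)\,\mathbb{Z}$. A single conjugation therefore preserves the indices $i$ and $k$ and alters $j$ only within its residue class modulo $\gcd(i,k)$. Since $\sim$ is the transitive closure of these moves, iterating preserves these properties, which is exactly the statement of the theorem for the first coordinate; the second and third are identical after cycling generators.

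The only non-routine step is the first one: identifying the precise generators of $\sim$ from Section \ref{chap5}. Once one accepts that $\sim$ is generated exactly by the inner-automorphism actions on each factor arising from basing-path changes, everything else is a clean commutator calculation in the nilpotent class-$2$ group $F/F_3$.
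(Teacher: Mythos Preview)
Your argument is correct and follows the same route as the paper: both compute that the action on the first coordinate sends $y^i s^j z^k$ to $y^i s^{\,j+\alpha k+\beta i} z^k$ for integers $\alpha,\beta$, and then observe that $j$ changes only within its residue class modulo $\gcd(i,k)$. The paper simply quotes this transformation formula from the earlier basing-change Corollary, whereas you rederive it via a direct conjugation calculation in $F/F_3$.

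One small inaccuracy worth noting: the equivalence $\sim$ is \emph{not} generated by independent conjugation in each factor. The $\mathbb{Z}^6$ action of Definition~\ref{action} has only three effective parameters $(b-c)$, $(f-a)$, $(d-e)$, and these are shared across the coordinates, so the actions on the three factors are correlated. Your description therefore gives a strictly coarser equivalence relation. This does not damage the proof, since the theorem asserts a necessary condition for $v\sim w$: if the condition survives the coarser relation, it survives the finer one a fortiori. But your final paragraph's claim that $\sim$ is generated ``exactly'' by independent inner automorphisms is not what the paper defines, and you should be careful not to rely on that equality elsewhere.
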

\begin{exam}
Let
 \[
 v=\left(z^2s^2+z^{-2}s^{-2}-4zs-4z^{-1}s^{-1}+6,\,0,\, 4xu+4x^{-1}u^{-1}-x^2u^2-x^{-2}u^{-2}-6\right)
 \]
 and 
 \[
 w=\left(z^2s+z^{-2}s-4zs-4z^{-1}s^{-1}+6,\, 0, z^2u+z^{-2}u-4zu-4z^{-1}u^{-1}+6\,\right)\text{.}
 \]
Consider the term $a_{022}^v=1$. In $w$ there is no term $a^w_{ijk}$ such that $i=0, k=2, j\equiv 0 \mod \gcd(i,k)$ and $a_{ijk}^w=1$. Hence, $v$ and $w$ are distinct elements in $\widetilde{K}$. 
\end{exam}
In the above example $v$  is in the image of $\widetilde{\sigma}^3$ but we were unable to determine whether $w$ is.

 This kind of indeterminacy is directly analogous to the triple linking number in the classical dimension. Next we go further and extract a stronger invariant. The naive approach would be to create an invariant listing the values of $k\mod\gcd(i,j)$ for each non-zero $a_{ijk}\neq0$. However, the power of the commutator term changes simultaneously for all group elements, so we construct an invariant based on the total Milnor quotient in \cite{MP},
\[
\overline{\mu}:\widetilde{K}\rightarrow \mathcal{A},
\]
where $\mathcal{A}$ is a space of affine points, lines, and planes which measures how all the commutator terms change when we change basing paths. We show this invariant can be used to distinguish two link maps. See Section \ref{newinv} for details on how $\mathcal{A}$ is defined.
\begin{exam}
Let
\[
v=(zs-z+z^{-1}s^{-1}-z^{-1}, 0, x-xu+x^{-1}-x^{-1}u^{-1})
\]
and
\[
w=(zs^2-z+z^{-1}s^{-2}-z^{-1}, 0, x-xu^2+x^{-1}-x^{-1}u^{-2})\text{.}
\]
We cannot use Theorem \ref{introthm2} to distinguish between $v$ and $w$, both of which are realised by link maps. However, we show that $\overline{\mu}$ can detect that these maps are different in Section \ref{newinv}.
\end{exam}

We now consider $n$-component annular link maps, which by definition are immersions 
\[
\coprod_{i=1}^n S^1\times I \rightarrow B^3\times I,
\]
such that restricting to the boundary $\coprod_{i=1}^n S^1\times\{j\} \rightarrow B^3\times \{j\}$ is an embedding of $n$-component unlink of circles in $B^3$ when $j=\{0,1\}$ where the $i$th component of maps onto the $i$th component of the unlink for each embedding. We consider the set of three-component annular link maps up to link homotopy, which we denote by $\SL_{2,2,2}^4$. Using $\widetilde{\sigma}^3$ we prove the following proposition.

\begin{prop}
The group of $n$-component annular link map up to link homotopy is non-abelian when $n\geq 3$.
\end{prop}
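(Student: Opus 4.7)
The plan is to use the invariant $\widetilde{\sigma}^3$ of Section \ref{chap5} pulled back along a closure map $\cl\colon\SL_{2,2,2}^4\to\LM_{2,2,2}^4$. First, equip $\SL_{2,2,2}^4$ with a group structure by stacking: given annular link maps $F,G\colon\coprod_{i=1}^3 S^1\times I\to B^3\times I$ whose boundary unlinks both agree with the standard unlink, the product $F\cdot G$ is obtained by placing $G$ above $F$ along the common boundary and rescaling the $I$-factor. The closure $\cl(F)$ caps off the two boundary unlinks by standard disjoint disks, converting each $S^1\times I$ component into a $2$-sphere inside $S^4$; this respects link homotopy, so $\widetilde{\sigma}^3\circ\cl\colon\SL_{2,2,2}^4\to\widetilde{K}$ is a well-defined set map.

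To exhibit non-commuting elements I would imitate the classical construction of non-commuting string links in $S^3$. Construct two annular link maps $A$ and $B$ supported in neighborhoods of the pairs of components $(1,2)$ and $(2,3)$ respectively, each introducing a canceling pair of self-intersections on component $2$ whose Whitney disks meet the other relevant component. These are the annular analogues of the local building blocks used to realise the sphere link map in Theorem \ref{introthm}. The closures $\cl(AB)$ and $\cl(BA)$ both have trivial two-component sublinks after deleting any single component, so the first and third coordinates of $\widetilde{\sigma}^3(\cl(AB))$ and $\widetilde{\sigma}^3(\cl(BA))$ vanish; all of the interesting data is concentrated in the middle coordinate, which records the triple linking.

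The key observation is that the self-intersections on component $2$ coming from $A$ are based by paths which, in the stacking $AB$, must pass through the annulus of $B$ before reaching the boundary base point, while in $BA$ they do not. This conjugates the associated element of $F/F_3$ by a meridional commutator factor, so the middle coordinates of $\widetilde{\sigma}^3(\cl(AB))$ and $\widetilde{\sigma}^3(\cl(BA))$ differ by a non-trivial shift in the commutator exponent. Theorem \ref{introthm2} distinguishes such orbits whenever the shift survives the $\gcd$ ambiguity, and in any remaining cases the finer invariant $\overline{\mu}$ of Section \ref{newinv} suffices. It follows that $AB\neq BA$ in $\SL_{2,2,2}^4$, so the group is non-abelian for $n=3$; the same configuration with $n-3$ trivial extra annuli handles $n>3$.

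The main obstacle is making precise the claim that commuting $A$ past $B$ multiplies the middle-component basing paths by a meridional element of $\pi_1$ of the complement, and verifying that the resulting change is detectable by $\widetilde{\sigma}^3$ rather than absorbed by the equivalence relation defining $\widetilde{K}$. This dictates the choice of building blocks for $A$ and $B$: they must be arranged so that the shift in the commutator exponent of the middle factor lies outside the basing-path orbit, which is exactly the situation the examples following Theorem \ref{introthm2} are designed to detect.
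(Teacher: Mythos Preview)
Your approach has a genuine gap: the shift you hope to detect in the middle coordinate is exactly what the equivalence relation defining $\widetilde K$ kills. With $A$ supported on components $(1,2)$ and $B$ on $(2,3)$, the contribution of $A$ to $\sigma_y$ is a Laurent polynomial in $x$ alone and that of $B$ is one in $z$ alone. Passing the intersections of one block through the other conjugates these polynomials by the longitude of component~$2$ in the lower block, which lies in $\langle x\rangle$ or $\langle z\rangle$ respectively. But conjugation in $\Gamma_y$ is precisely the effect of rebasing the second sphere (Lemma~\ref{changebase}), and hence lies in a single orbit of the $\mathbb Z^6$-action of Definition~\ref{action}: conjugating $z^i$ by $x^n$ multiplies it by $t^{-ni}$ and conjugating $x^k$ by $z^m$ multiplies it by $t^{mk}$, and the choice $b-c=m$, $d-e=n$ in that action realises exactly this combined shift on the $y$-coordinate. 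Since the first and third coordinates of $\sigma^3$ vanish for both $\cl(AB)$ and $\cl(BA)$, one obtains $\widetilde\sigma^3(\cl(AB))=\widetilde\sigma^3(\cl(BA))$, and neither Theorem~\ref{introthm2} nor $\overline\mu$ can separate equal classes. (Note also that in Theorem~\ref{R1} the middle sphere is \emph{embedded}; the self-intersections live on components~$1$ and~$3$, contrary to your description.)

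The paper's argument is different in kind. It takes a three-component JK construction $X$ based on the Whitehead link with the second component split (so self-intersections on components~$1$ and~$3$), together with an \emph{embedded} pure braid $J$ whose longitudes are non-trivial, and compares $XJ\overline X$ with $X\overline X J$. The closure of the latter is link-homotopically trivial, since $X\overline X$ cancels and the remaining embedded $J$ has unlinked closure; the closure of the former is precisely the non-trivial link map of Theorem~\ref{R1}. Hence $J$ and $\overline X$ fail to commute. The essential feature missing from your setup is that the non-trivial longitude of $J$ acts on self-intersection data living in the \emph{first and third} coordinates of $\sigma^3$, where it enters as a change of the meridian of another component rather than as a rebasing of the component carrying the intersections, and so is not absorbed by the $\widetilde K$-equivalence.
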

This result was proven in \cite{M21} by studying the isomorphisms of the free Milnor group, which is isomorphic to the group of embedded annular link maps up to link homotopy, whereas our proof uses the self-intersections of annular link maps to prove the result.

The benefit of studying annular link maps instead of link maps is that the set of annular link maps up to link homotopy is a group under a natural stacking operation. This is a similar move Habegger and Lin made in \cite{HL90} which allowed them to classify $n$-component classical links up to link homotopy.
Combining our approach of studying intersections with Meilhan and Yasuhara's approach of studying the induced isomorphism, we construct an invariant of three-component annular link maps up to link homotopy giving a group homomorphism
\[
\Theta:\SL_{2,2,2}^4\rightarrow \left(\mathbb{Z}\left[F/F_3\right]\right)^3 \rtimes \Aut(MF(3)),
\]
where the semi-direct product records the self-intersection information on each component and the automorphism of the free Milnor group on three-generators, $\Aut(MF(3))$, which is induced by the annular link map. This map is an extension of the work by Meilhan and Yasuhara in \cite{M21} to the non-embedded case. Note that since annular link maps have a canonical choice of meridians and basing there is less indeterminacy in this invariant compared to $\widetilde{\sigma}_3$.

We show that $\Theta$ is a group homomorphism and there exists a closure map $\SL_{2,2,2}^4\rightarrow \LM_{2,2,2}^4$ and establish the following theorem.

\begin{theo}
The following diagram commutes
\[
\begin{tikzcd}
\SL_{2,2,2}^4\arrow[d, "\Theta"]\arrow[r,two heads]&\LM_{2,2,2}^4\arrow[d, "\widetilde{\sigma}^3"]\arrow[r,two heads, "i"]&\LM_{2,2}^4\arrow[d, "\sigma"]\\
\left(\mathbb{Z}\left[F/F_3\right]\right)^3 \rtimes \Aut(MF(3))\arrow[r]& \widetilde{K}\arrow[r, "p_i"]&\left(\mathbb{Z}\left[\mathbb{Z}\right]\right)^2\text{.}
\end{tikzcd}
\]

\end{theo}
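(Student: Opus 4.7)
The right-hand square has already been recorded as commutative in the introductory discussion following the definition of $\widetilde{\sigma}^3$, so the work is to verify the left-hand square. I would begin by unpacking the bottom-left map as the composition of the projection $(\mathbb{Z}[F/F_3])^3\rtimes \Aut(MF(3))\to(\mathbb{Z}[F/F_3])^3$ onto the self-intersection factor, followed by the quotient map to $\widetilde{K}$. The closure map $\SL_{2,2,2}^4\to\LM_{2,2,2}^4$ sends an annular link map in $B^3\times I$ to the link map of three spheres in $S^4$ obtained by capping each annulus with the unique pair of disjoint embedded disks in $B^3\times\{0\}$ and $B^3\times\{1\}$ bounding its unknotted boundary circles, and then gluing these two $B^3$'s onto $B^3\times I$ to form $S^4$.

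The core computation is, for each component $i\in\{1,2,3\}$, to match the $i$th coordinate of $\Theta$ with the $i$th coordinate of $\widetilde{\sigma}^3$ applied to the closure. This reduces to a transverse self-intersection count: the self-intersections of the closed $i$th sphere in the complement of the other two closed components are in canonical bijection with the self-intersections of the $i$th annulus in the complement of the other two annuli, because the capping disks are embedded and disjoint from every other component. I would then verify that the double-point loops used to compute the Milnor group element at each self-intersection, measured against the closure, are freely homotopic in the relevant complement to the loops producing the corresponding $\mathbb{Z}[F/F_3]$ coefficient in $\Theta$. For this, one notes that the canonical meridians and basing in $B^3\times\{0\}$ for an annular link map extend in a preferred way to a basing of the closure in $S^4$, and this is the basing with respect to which $\widetilde{\sigma}^3(c(f))$ is to be compared.

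The main obstacle is the $\Aut(MF(3))$ factor: $\Theta$ records the automorphism of the free Milnor group induced by the annular link map, whereas $\widetilde{\sigma}^3$ has no analogous piece of data, having already been quotiented by the equivalence relation $\sim$ defining $\widetilde{K}$. Thus the key lemma to prove is that the $\Aut(MF(3))$-action on $(\mathbb{Z}[F/F_3])^3$ in the semidirect product descends to the identity on $\widetilde{K}$. Concretely, changing the automorphism factor corresponds to choosing different meridians relative to the canonical basepoint; on the closure this is precisely the basing-path change that, by construction in Section \ref{chap5}, generates $\sim$. Most of the technical content will be this compatibility check, performed by tracking how a generator of $\Aut(MF(3))$ (a Nielsen-type move on meridians) acts on a self-intersection contribution $\gamma\in\mathbb{Z}[F/F_3]$ and matching it with the corresponding commutator-conjugation move on $\widetilde{K}$. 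Once this is done, the projection of $\Theta(f)$ to $\widetilde{K}$ agrees with $\widetilde{\sigma}^3(c(f))$, and combining with the already-established right square proves the theorem.
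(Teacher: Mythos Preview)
Your first three paragraphs are correct and are precisely what the paper means by ``from the construction of $\Theta$'': the capping discs are embedded and disjoint, so the self-intersections of the closed sphere $f_{X,i}$ coincide with those of the annulus $X_i$; the basing system at $t=0$ extends to a basing of the closure; and with that basing $\sigma^3(f_X)=(\Theta_x(X),\Theta_y(X),\Theta_z(X))$ on the nose. Passing to the class in $\widetilde{K}$ then gives exactly the image of $\Theta(X)$ under the bottom-left map. Combined with Lemma~\ref{containment} for the right square, this is the entire proof.

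Your fourth paragraph, however, introduces a spurious obstacle. The bottom-left map $L\to\widetilde{K}$ is simply the set map $(k,\psi)\mapsto[k]$: project onto the $K$ factor and take the orbit. There is nothing to reconcile with the $\Aut(MF(3))$ component, because $L=K\rtimes\Aut(MF(3))$ is not a quotient---as a set it is $K\times\Aut(MF(3))$, and projection onto the first factor is automatically well defined. The automorphism $\eta(X)$ is a fixed datum attached to $X$, not something that varies; there is no sense in which one ``changes the automorphism factor'' for a given $X$. Consequently your proposed key lemma---that the $\Aut(MF(3))$-action on $K$ descends to the identity on $\widetilde{K}$---is neither needed nor (for arbitrary automorphisms, e.g.\ those permuting generators) true. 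Drop that paragraph entirely: the proof is finished at the end of your third paragraph.
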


We show that the subgroup of annular link maps whose closure is a Brunnian link map -- link maps where each two component sublink is trivial up to link homotopy -- is a normal subgroup, containing the subgroup of annular link maps link homotopic to a embedded annular link map.  

Using the work of Fenn and Rolfsen \cite{FR}, Brendle and Hatcher \cite{HB}, and Benjamin Audoux, Paolo Bellingeri, Jean-Baptiste Meilhan, and Emmanuel Wagner \cite{A}, we show how to construct a class of link maps and compute $\widetilde{\sigma}^3$ on this class in terms of the Kirk invariants of the two-component sublinks. From this we prove the following proposition.
\begin{prop}\label{introconnect}
Connect sum on the set of $n$-component link maps does not give a well defined group structure for $n\geq 3$.
\end{prop}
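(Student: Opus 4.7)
The plan is to exhibit two arc-choices in the connect-sum construction that produce link maps with different values of $\widetilde{\sigma}^3$, showing that connect sum does not descend to a well-defined binary operation on $\LM_{2,2,2}^4$, and hence gives no group structure on link homotopy classes.

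First I would fix the definition of componentwise connect sum: given three-component link maps $L$ and $L'$, for each $i \in \{1,2,3\}$ choose an embedded arc $\alpha_i \subset S^4$ from the $i$th component of $L$ to the $i$th component of $L'$, with each $\alpha_i$ meeting the union of the six sphere images only at its endpoints and disjoint from $\alpha_j$ for $j \neq i$, and then tube the $i$th components along $\alpha_i$. For $n=2$ there is enough room in $S^4$ to isotope any two such arc families to each other through admissible arcs, and the Kirk invariant is known to be additive under the resulting operation; for $n\geq 3$ the arcs can be genuinely threaded around the other components in a way that cannot be undone by an ambient link homotopy.

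Next I would use the construction of Fenn--Rolfsen and its refinements by Brendle--Hatcher and by Audoux--Bellingeri--Meilhan--Wagner, together with the formula expressing $\widetilde{\sigma}^3$ on this class in terms of the Kirk invariants of the two-component sublinks mentioned in the paragraph preceding the proposition, to produce a pair $L, L'$ and two admissible arc families $\alpha, \beta$ such that $L \#_\alpha L'$ and $L \#_\beta L'$ evaluate under $\widetilde{\sigma}^3$ to elements of $\widetilde{K}$ lying in different orbits. The mechanism is that replacing $\alpha_i$ by $\beta_i$ drags the $i$th tube around another component, which produces additional self-intersections contributing extra monomials whose commutator-exponents differ between the two choices.

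The hard part will be certifying that the two resulting elements of $\widetilde{K}$ really represent distinct orbits rather than being identified by the basing equivalence defining $\widetilde{K}$. This is where I would invoke Theorem \ref{introthm2}: the $(i,k)$-exponent pair of each non-zero monomial is rigid under the equivalence relation, so exhibiting monomials whose $(i,k)$-pairs disagree already certifies distinctness of the two orbits. With this, the ill-definedness of $\#$ on $\LM_{2,2,2}^4$ is immediate, and for $n\geq 4$ the conclusion follows by appending trivially embedded extra components that can be left untouched throughout the argument.
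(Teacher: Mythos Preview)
Your overall strategy matches the paper's: produce two tubings of the same pair of summands whose closures have distinct values of $\widetilde{\sigma}^3$. The paper realises this via the annular formalism (Corollary~\ref{tube}): with $X$ the three-component JK construction having trivial $y$-component and $J$ the ribbon braid of Figure~\ref{fig:PB}, the closures $f_{XJ\overline{X}}$ and $f_{XJJ\overline{X}}$ are connect sums of $f_X$ with $f_{\overline{X}}$ along tubes that wind once, respectively twice, around the $y$-sphere, and Theorem~\ref{main1} computes
\[
\sigma^3(f_{XJ\overline{X}})=\bigl(z(s-1)+z^{-1}(s^{-1}-1),\,0,\,x(1-u)+x^{-1}(1-u^{-1})\bigr)
\]
versus the same expression with $s,u$ replaced by $s^2,u^2$.

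The gap in your proposal is the distinguishing step. Dragging the tube around another component does \emph{not} create new self-intersections; the tube is embedded. What changes is the group element attached to each existing intersection, and the change is a conjugation in $\Gamma_i$, which alters only the commutator exponent $j$ while leaving the $(i,k)$-pair fixed. Consequently the $(i,k)$-multisets of the two values above are identical, and since every monomial has $\gcd(i,k)=1$, the congruence $j\equiv m\pmod{\gcd(i,k)}$ of Theorem~\ref{introthm2} is vacuous. So Theorem~\ref{introthm2} cannot separate these two orbits, and more generally will not separate any pair obtained by re-routing tubes in this way. The paper instead invokes the finer invariant $\overline{\mu}$ of Section~\ref{newinv} (Example~\ref{examp}), which records the affine line in $\mathbb{Z}^n$ swept out by all the commutator exponents simultaneously and distinguishes the two values by the Euclidean distance of the completed line from the origin. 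If you want to salvage your route you would need either to use $\overline{\mu}$ in place of Theorem~\ref{introthm2}, or to manufacture a more elaborate pair of summands whose intersections carry group elements with $\gcd(i,k)\geq 2$.
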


This paper has been extracted from my 2022 PhD thesis at Durham University.

\subsection{Organisation of paper}

In Section \ref{chap1}, we define the equivariant intersection form, geometric intersections invariant, and establish their relationship to each other.

In Section \ref{chap2}, three types of homotopy are defined which allow us to understand link homotopy in terms of a more rigid structure. We also discuss the results which have been achieved in the two-component case.

In Section \ref{chap4}, Milnor groups are defined and we revise commutator calculus. We then show that the Milnor group of the free group on two generators, $F/F_3$, is isomorphic to the integral Heisenberg group of matrices and compute its group homology. We then calculate the homology of the complement of generically immersed two-spheres in the four-sphere and show that the second homology is generated by Clifford tori around each self-intersection. Using Dwyer's theorem we show that the Milnor group of the fundamental group of the complement of two generically immersed disjoint spheres is isomorphic to $F/F_3$. 

In Section \ref{chap5}, we give a link homotopy invariant for based three-component link maps of oriented two-spheres. We then remove the based restriction to create $\widetilde{\sigma}^3$.  We then develop tools for being able to differentiate between values of $\widetilde{\sigma}^3$ since our invariant takes values in a quotient space. We show there exist examples of link maps with the same image in $S^4$ but are distinct. Additionally, we develop an invariant which highlights how analogous our invariant is to the triple linking number. We then construct an invariant which measures the size of the values of our invariant.

In Section \ref{chap6}, we construct an invariant for annular link maps and show how one can use annular link maps to create link maps and establish the existence of an annular link map invariant which records induced automorphism information and the intersection information.

In Section \ref{chap7}, we discuss generalising $\widetilde{\sigma}^3$ to establish an $n$-component link map invariant.

\subsection*{Acknowledgments}I would like to thank my PhD supervisor Mark Powell for his guidance during my PhD and advice in crafting this paper. I also wish to thank my external examiner Rob Schneidermann for his helpful suggestions.

\section{Algebraic and geometric intersections}\label{chap1}

\subsection{The intersection form}
Let $X$ be a CW complex and  $\widetilde{X}$ be the regular cover of $X$, associated to the kernel of a surjective group homomorphism  $\phi:\pi_1(X)\rightarrow \pi$. Let $Y\subset X$ and define $\widetilde{Y}$ to be the pre-image of $Y$ under the covering map.  Using deck transformations, $C_*(X, Y;\Lambda)\colon=C_*(\widetilde{X},\widetilde{Y})$ is given a left $\Lambda$-module structure, where $\Lambda:=\mathbb{Z}\pi$; where $\Lambda$ has a involution coming from extending the involution $\overline{g}:=g^{-1}$ on $\pi$ linearly. With homology denoted by $H_*(X,Y;\Lambda)$.

We define the cochain complex by 
\[
C^*(X, Y; \Lambda)\colon= \Hom_{\rit-\Lambda}\left(\overline{C_*(X, Y;\Lambda)}, \Lambda\right),
\]
where $\overline{C_*(X, Y;\Lambda)}$ is the involuted chain complex, with corresponding cohomology denoted $H^*(X, Y; \Lambda)$. This is a left $\Lambda$-module with the action of $\Lambda$ defined by
\[
(r\cdot f)(\sigma)\colon=rf(\sigma),
\]
where $r\in \Lambda$, $f\in C^*(X, Y; \Lambda)$, and $\sigma\in\overline{C_*(X, Y;\Lambda)}$.

\begin{defn}
Define $\kappa:C^n(X, Y; \Lambda)\rightarrow \overline{\Hom_{\lef-\Lambda}\left(C_n(X,Y;\Lambda), \Lambda)\right)}$ to be the map
\[
f\mapsto \left( \sigma \mapsto \overline{f(\sigma)}\right)\text{.}
\]
\end{defn}

We denote the evaluation map by 
\[
p:H^n\left(\overline{\Hom_{\lef-\Lambda}\left(C_*(X,Y;\Lambda), \Lambda\right)}\right)\rightarrow\overline{\Hom_{\lef-\Lambda}\left(H_n(X,Y;\Lambda\right), \Lambda)}
\]
and define $\ev\colon:=p\circ \kappa$. We often refer to $\ev$ as the evaluation map, however we will be specify which map if it is unclear.

Let $W$ be a compact orientable connected four-manifold with $\partial W$ which is equipped with Poincar\'e duality maps
\[
PD:H^n(W;\Lambda)\rightarrow H_{4-n}(W, \partial W;\Lambda)
\text { and }PD:H^n(W, \partial W;\Lambda)\rightarrow H_{4-n}(W;\Lambda)\text{.}
\]
Consider the following composition:
\[
\Phi:H_{2}(W;\Lambda)\xrightarrow{q} H_{2}(W, \partial W;\Lambda)\xrightarrow{\PD^{-1}}H^2(W;\Lambda)\xrightarrow{\ev}\overline{\Hom_{\lef-\Lambda}\left(H_2(W;\Lambda), \Lambda\right)}\text{.}
\]

\begin{defn}
The \emph{equivariant intersection form} on $W$ is the map
\begin{align*}
H_2(W;\Lambda)\times H_2(W;\Lambda)&\rightarrow \Lambda\\
(a,b)&\mapsto\Phi(b)(a)\text{.}
\end{align*}
\end{defn}

From the definition, it is clear this form is sesquilinear and it is well known that the intersection form is hermitian \cite{supsup}.

\subsection{Geometric Intersections}
We define geometric intersections of $2$-spheres in an oriented four-manifold $M$. 
 
Fix an orientation of $S^2$ with a basepoint $x_0\in S^2$. Let $\left(f, \gamma_f\right):S^2\rightarrow M$ be a based map of a sphere i.e a map paired with a path $\gamma_f:\left[0,1\right]\rightarrow M$ where $\gamma(0)=m_0$ and $\gamma(1)=f\left(x_0\right)$. We often suppress the $\gamma_f$ in our notation and instead just write $f:S^2\rightarrow M$ to be a based map, where it is implicit there exists a specific $\gamma_f$. Let $g:S^2\rightarrow M$ be another based map. Assume $f$ and $g$ are generic transverse immersions and further assume the map $f\cup g$ is generic also.  By compactness there exists a finite number of intersections between the images of $f$ and $g$. Suppose $x$ is an intersection of $f$ and $g$ where $p_1\in f^{-1}\left(x\right)$ and $p_2\in g^{-1}\left(x\right)$. Let $k_x:\left[0,1\right]\rightarrow S^2$ be a path such that $k_x(0)=x_0$ and $k_x\left(1\right)=p_1$. Similarly, define $l_x:\left[0,1\right]\rightarrow S^2$ with $l_x(0)=x_0$ and $l_x(1)=p_2$. We define
\[
g_x=\gamma_f\cdot f\left(k_x\right)\cdot g\left(\overline{l}_x\right)\cdot  \overline{\gamma}_g\in \pi_1(M)
\]

\begin{defn}\normalfont
We define the \emph{geometric intersection} to be
\[
\lambda_{\text{geo}}(f, g)=\sum_{x\in f(S^2)\cap g(S^2)}\varepsilon_xg_x\in\mathbb{Z}\pi,
\]
where $\varepsilon_x$ is the sign of the intersection at $x$.
\end{defn}
The geometric intersection is independent of the choice of paths on each sphere as $S^2$ is simply connected. However, if we consider $f$ with a different choice of basing path this may change the value of the geometric intersection. Hence, it is only well defined on based maps. The geometric intersection form agrees with the equivariant intersection form \cite{AR}. Hence, we will use $\lambda$ to refer to both the geometric and the equivariant intersection forms.

Let $f^+\colon S^2\rightarrow M$ be a normal push-off, using a section of the normal bundle of $f$ which is transverse to the $0$-section. To make sense of $\lambda_{geo}(f, f)$ we  define:
\[
\lambda_{\text{geo}}(f,f):=\lambda(f, f^+).
\]
This is independent of the choice of normal push off.

We define a self-intersection number related to $\lambda$. Let $I$ be the set of all double points of $f$ and let $p\in I$. Let $p_1, p_2\in f^{-1}\left(p\right)$ with $p_1\neq p_2$ and let $\delta_1,\delta_2:[0,1]\rightarrow S^2$ be paths from $x_0$ to $p_1$ and $p_2$ respectively. Set
\[
g_p=\gamma_f\cdot f\left(\delta_1\right)\cdot f\left(\bar{\delta}_2\right)\cdot\bar{\gamma}_f\in\pi_1(M)\text{.}
\]

We could have swapped the roles of $\delta_1$ and $\delta_2$ and this would give us the loop $g^{-1}_p$. To resolve this we take values in the quotient $\mathbb{Z}\widetilde{\pi}\colon=\mathbb{Z}\pi/\{g\sim g^{-1}\}$. This is a group quotient where we view $\mathbb{Z}\pi$ as an abelian group.
\begin{defn}\normalfont
The \emph{self-intersection number} of a generic (based) immersion $f:S^2\rightarrow M$ is defined to be 
\[
\mu(f)\colon=\sum_{p\in I}\varepsilon_pg_p\in \mathbb{Z}\widetilde{\pi}\text{,}
\]
where $\varepsilon_p$ is the sign of the intersection at $p$.
\end{defn}
We can relate the self-intersection number and geometric intersection number by the following.
\begin{theo}\label{intersection}
Let $f:S^2\rightarrow M$ be a generic immersion and let $\iota:\mathbb{Z}\rightarrow \mathbb{Z}\pi$ be the ring homomorphism where $n\mapsto n1$. Then the following equation holds
\begin{equation}
\lambda\left(f, f\right):=\mu(f)+\overline{\mu(f)}+\iota\left(\chi(f)\right)\in\mathbb{Z}\pi,
\end{equation}
where $\overline{\mu(f)}$ is given by taking the representatives we use to describe $\mu(f)$ and sending each group element to its inverse, and $\chi(f)$ is the Euler number of the normal bundle of $f$.
\end{theo}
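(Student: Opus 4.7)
The plan is to compute $\lambda(f,f) = \lambda(f,f^+)$ geometrically by arranging the normal push-off $f^+$ so that the intersections of $f$ with $f^+$ split cleanly into two types, one contributing the double-point terms and one contributing the Euler number.

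First I would fix a generic section $s$ of the normal bundle $\nu_f$ with transverse zeros, and take $f^+$ to be the image of $s$ in a tubular neighborhood of $f$. Since $f$ is generic, its singular set consists of finitely many transverse double points $p \in I$ with two preimages $p_1,p_2 \in S^2$. Near each such $p$, the image of $f$ looks like two transverse sheets $A = f(D_1)$ and $B = f(D_2)$, and the corresponding push-off sheets $A^+,B^+$ lie close to $A,B$. The intersections of $f$ with $f^+$ near $p$ come in two distinct flavors: the ``cross'' intersections $A \cap B^+$ and $B \cap A^+$, one of each, and the ``diagonal'' contributions $A \cap A^+$ and $B \cap B^+$, which occur precisely at the zeros of $s$ restricted to $D_1$ and $D_2$, and which we can arrange (by genericity of $s$) to be disjoint from the double-point neighborhoods.

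Next I would compute each type of contribution to $\lambda(f,f^+) = \sum_{x \in f \pitchfork f^+} \varepsilon_x g_x$. For the cross intersections near a double point $p$: the intersection $A \cap B^+$ sits arbitrarily close to $p$, and pulling the push-off back along the normal bundle homotopes the associated loop (in $M$) to $\gamma_f \cdot f(\delta_1) \cdot f(\overline{\delta_2}) \cdot \overline{\gamma_f} = g_p$, with sign equal to the sign $\varepsilon_p$ of the double point (since the push-off preserves orientations). Symmetrically, $B \cap A^+$ contributes $\varepsilon_p g_p^{-1}$, because swapping the roles of $\delta_1$ and $\delta_2$ inverts the group element but not the sign. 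Summing over all $p \in I$ gives precisely $\mu(f) + \overline{\mu(f)}$ in $\mathbb{Z}\pi$, where the sum is now genuinely well defined even though $\mu(f)$ alone lives only in $\mathbb{Z}\widetilde{\pi}$. For the diagonal intersections: an intersection of $f$ with $f^+$ inside a single sheet corresponds to a zero of $s$, the associated loop is contractible (since $f$ and $f^+$ agree up to a small homotopy), and counted with signs these give $\chi(\nu_f) = \chi(f)$. So the total diagonal contribution is $\iota(\chi(f))$.

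Adding the two contributions gives the formula. The main technical obstacle is verifying the sign and loop calculation for the cross terms: one must check that the orientation conventions indeed yield the same sign $\varepsilon_p$ for both $A \cap B^+$ and $B \cap A^+$, while the group elements are mutually inverse, and that the homotopy sliding the push-off back to $f$ does not alter the homotopy class of the basing loop in $\pi_1(M)$. This is essentially a local calculation in a neighborhood of $p$, and once it is established the global formula follows by summing and invoking the agreement of $\lambda_{\mathrm{geo}}$ with the algebraically defined intersection form.
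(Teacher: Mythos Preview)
The paper does not actually prove this theorem; it is stated as a known result (the standard Wall-type formula relating the equivariant self-intersection number to the self-intersection invariant $\mu$ and the normal Euler number), and the paper moves on immediately to remark on its consequence that $\mu$ is not a homotopy invariant. Your argument is correct and is precisely the standard proof one finds in the literature (e.g.\ Freedman--Quinn or Ranicki): split the intersections of $f$ with a generic push-off $f^+$ into the cross-sheet intersections near each double point, which contribute $\mu(f)+\overline{\mu(f)}$, and the single-sheet intersections coming from the zeros of the defining section, which contribute $\iota(\chi(f))$. The local sign/loop check you flag is indeed the only nontrivial point, and it goes through exactly as you describe.
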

It is easily shown from this that  $\mu$ is not a homotopy invariant. Cusp homotopies (defined in the next section) change the Euler number of the normal bundle by $\pm 2$ but $\lambda$ remains fixed as it is a homotopy invariant. Therefore must $\mu$ must change for $\lambda$ to remain constant.

\section{Homotopy of surfaces and the Kirk invariant}\label{chap2}

\subsection{The structure of link homotopy in dimension four}
\begin{defn}
A map
\[
f=f_1\sqcup\ldots\sqcup f_n:\coprod_{i=1}^nS^2_i \rightarrow S^4,
\]
where $S^2_i\cong S^2$, is called a \emph{link map} if components disjoint in the domain are disjoint in the image, i.e $f_i(S^2_i)\cap f_j(S^2_j)=\phi$ for $i\neq j$. A \emph{link homotopy} is a homotopy through link maps. We denote the set of $n$-component link maps up to link homotopy by $\LM_{\underbrace{2,\ldots, 2}_{n}}^4$. If each of our spheres comes equipped with a basing path from the basepoint in $S^4$ to $f_i(S^2_i)$ we call $f$ a \emph{based link map}. The set of based link maps up to link homotopy is denoted $\LM_{\underbrace{2,\ldots, 2}_{n}*}^4$. 
\end{defn}

\begin{prop}[\cite{FQ}]\label{FQ}
A generic (link-)homotopy between two generic immersions is (link-)homotopic to concatenations of isotopies, finger moves, Whitney moves, and cusp homotopies.
\end{prop}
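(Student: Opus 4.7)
The plan is to analyze the track of the homotopy and read off the three elementary moves from its generic singularities. Given a generic link homotopy $H : \coprod_i S^2_i \times I \to S^4$, form the track $\widetilde{H} : \coprod_i S^2_i \times I \to S^4 \times I$ by $(x,t) \mapsto (H(x,t), t)$. Since $H$ is a link homotopy, the tracks of distinct components are disjoint, so I can focus on a single component $\widetilde{H}_i := \widetilde{H}|_{S^2_i \times I}$, a map of a $3$-manifold into the $5$-dimensional product $S^4 \times I$. A small perturbation rel boundary, supplied by multi-jet transversality, arranges that $\widetilde{H}_i$ is a self-transverse immersion everywhere except at finitely many isolated cusp points where the immersion condition itself degenerates. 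The double-point locus $\Sigma_i$ of $\widetilde{H}_i$ is then a proper $1$-dimensional subcomplex of $S^4 \times I$, smooth away from the cusp times.

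The decomposition is obtained by slicing $\Sigma_i$ with the projection $\pi : \Sigma_i \to I$. By genericity I may assume $\pi$ is Morse on the smooth part of $\Sigma_i$, with the isolated cusp times of $\widetilde{H}_i$ all disjoint from the Morse critical values of $\pi$. Away from these finitely many critical times, $\pi$ is a submersion, so the double-point set of $H_i(\cdot, t)$ moves by an ambient isotopy and $H_i(\cdot, t)$ itself varies by an ambient isotopy of $S^4$; these are the isotopies in the statement. At a Morse minimum of $\pi$ on $\Sigma_i$, an arc of $\Sigma_i$ is born: locally two sheets of $H_i$ become tangent and then split into two transverse double points of opposite sign, which is exactly the finger move. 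A Morse maximum is the same picture in reverse and gives the Whitney move. At a cusp time of $\widetilde{H}_i$, a single branch passes through the standard cusp normal form, creating or destroying one self-intersection and altering the normal Euler number of $f_i$ by $\pm 2$; this is the cusp homotopy. Concatenating these finitely many moves in the order dictated by increasing $t$, separated by the intervening isotopies, reconstructs $H$ up to link homotopy.

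The main obstacle is the singularity-theoretic classification implicit in the preceding paragraph: one must verify that for a generic $1$-parameter family of immersions of a surface into a $4$-manifold, the only codimension-one degeneracies are the tangential double-point birth and the isolated cusp, and that no other phenomenon such as a triple point or a higher-order self-tangency occurs. Triple points are easy to rule out by a dimension count: three sheets of $\widetilde{H}_i$ meeting at a point in $S^4 \times I$ would require the intersection of three $3$-dimensional submanifolds inside a $5$-manifold, which has expected dimension $3 \cdot 3 - 2 \cdot 5 = -1$. The remaining classification is a standard consequence of Mather-type multi-jet transversality for maps of $3$-manifolds into $5$-manifolds, which I would quote from Freedman--Quinn rather than rederive. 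Once the classification is in hand, the link-homotopy refinement is immediate, because each elementary move is supported in a small ball meeting only one component of $H$, so disjointness of distinct components is preserved throughout the concatenation.
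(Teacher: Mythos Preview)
The paper does not prove this proposition at all; it simply cites it from Freedman--Quinn \cite{FQ} and uses it as a black box. Your sketch is essentially the standard argument that Freedman--Quinn give: pass to the track in $S^4\times I$, put it in general position via multi-jet transversality, and read off the elementary moves from the Morse theory of the projection restricted to the double-point locus, together with the isolated cusp singularities of the immersion itself. So there is nothing to compare on the level of approach; you have supplied the proof that the paper omits, and it is the expected one.

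One small remark on your write-up: the perturbation you invoke must be performed in a way that preserves the level-preserving structure of the track, i.e.\ you perturb $H$ itself rather than $\widetilde{H}$ as an arbitrary map of a $3$-manifold into a $5$-manifold, since otherwise the slices $\widetilde{H}(\cdot,t)$ need not land in $S^4\times\{t\}$ and the interpretation as a homotopy is lost. This is implicit in your phrasing but worth making explicit. Otherwise the argument is sound.
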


Hence, we can think of a link homotopy between two link maps as a combination of isotopy, Whitney moves, finger moves and cusp homotopies.

\subsection{Two-component link maps and the Kirk invariant}
For this subsection all link maps will have two components unless stated otherwise. In 1988, Koschorke showed that, up to link homotopy, connect sum is  well defined on two-component link maps \cite{K}. This turns $\LM_{2,2}^4$ an abelian monoid. In 1999 Bartels and Teichner showed that each element in $\LM_{2,2}^4$ has an inverse, establishing that $\LM_{2,2}^4$ is a group \cite{BT}. Two-component link maps have an invariant defined by Paul Kirk in 1988, which we now call the Kirk invariant. In 2018, Schneiderman and Teichner showed that the Kirk invariant is a complete invariant of $\LM_{2,2}^4$ \cite{ST}. We define this invariant since this is the inspiration for our three-component invariant.

Let $f=f_1\sqcup f_2:S^2_1\coprod S^2_2\rightarrow S^4$ be a link map and define
\[
M_i=S^4\setminus \nu(f_j),
\]
where $i\neq j$ and $\nu(f_j)$ is the regular neighbourhood of $f_j$. 

We compute intersections $\lambda(f_1, f_1)\in \mathbb{Z}\pi_1(M_1)$. This is not a link homotopy invariant since the fundamental group of the complement of $f_2$ changes over a generic link homotopy. To fix this we use homology; since the homology of the complement of a generic sphere in $S^4$ is constant throughout a generic link homotopy. Define $\lambda(f_1, f_1)\in \mathbb{Z}H_1(M_1)$  to be $\sigma_1(f)$. Swapping the roles of $f_1$ and $f_2$ we get another invariant of $f$ called $\sigma_2(f)$.

\begin{defn}
The \emph{Kirk invariant} of a link map $f$ is defined to be 
\[
\sigma(f)=\left(\sigma_1(f), \sigma_2(f)\right)\in\mathbb{Z}[\mathbb{Z}]\oplus\mathbb{Z}[\mathbb{Z}]\text{.}
\]
\end{defn}
We can realise the image of the Kirk invariant \cite{ST} using a technique Fenn and Rolfsen created in their paper to construct the first example of a link homotopically non-trivial link map in dimension two \cite{FR}. 

\begin{con}[The Jin-Kirk construction]
Consider a two-component link, $L=L^1\coprod L^2\subset S^3\times\big\{\frac{1}{2}\big\}\subset S^3\times I$, where $I=\left[0,1\right]$. Suppose the linking number of $L$ is zero; both components null-homotopic in the complement of the other and both components unknotted. Consider the trace of a homotopy inside $S^3\times \left[\frac{1}{2}, 1\right]$ which fixes the second component and does a null homotopy of the first. Then take the trace of a homotopy running backwards from $S^3\times \{\frac{1}{2}\}$ to $S^3\times \{0\}$ with the first component fixed and a null-homotopy applied to the second component. Capping off both ends of the cylinder with a $D^4$ and both ends traces of the link with a $D^2$ gives a link map in $S^4$.
\end{con}

The construction of the link map above is, up to link homotopy, independent of the choice of null homotopy on each component as $\pi_2\left(S^3\setminus L^i\right)=0$, where $L^i$ is a component of the link we started the construction with. This gives a surjective map \cite{ST}
\[
JK:\mathfrak{L}\twoheadrightarrow \LM_{2,2}^4\text{,}
\]
where $\mathfrak{L}$ is the set of two-component links with each component null-homotopic in the complement of the other and both components unknotted.

\begin{figure}
    \centering
    \includegraphics[scale=0.75]{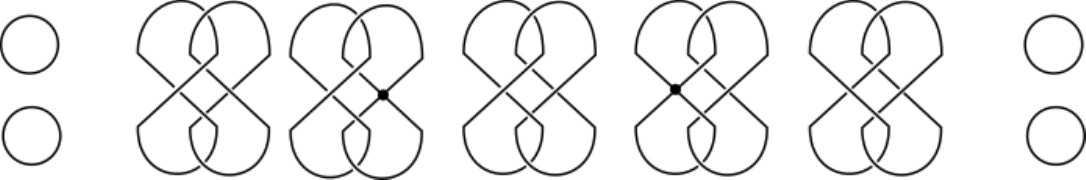}
    \caption{The JK-construction applied to the Whitehead link with time moving from left to right with a single intersection on each sphere.}
    \label{JKW}
\end{figure}
\begin{exam}\normalfont
Let $FR$ be the link map created from the JK construction described in Figure \ref{JKW}. Then
\[
\sigma(FR)=\left(t+t^{-1}-2, 2-t-t^{-1}\right)\text{.}
\]
\end{exam}

Schniederman and Teichner proved the following result which classified two-component link maps.
\begin{theo}\label{clasif}
Let $z=2-t-t^{-1}$ and identify $\mathbb{Z}=z\cdot\mathbb{Z}\left[z\right]/z^2\cdot\mathbb{Z}\left[z\right]$. Then the sequence
\[
\begin{tikzcd}
0\arrow[r]&\LM_{2,2}^4\arrow[r, "\sigma"]&z\cdot\mathbb{Z}\left[z\right]\oplus z\cdot\mathbb{Z}\left[z\right]\arrow[r]&\mathbb{Z}\arrow[r]&0\text{,}
\end{tikzcd}
\]
where the right most map is addition, is exact.
\end{theo}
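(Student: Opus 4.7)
The plan is to establish the exact sequence in three stages: well-definedness of the map into the claimed subgroup, surjectivity onto the kernel of the addition map, and injectivity.

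First, I would check that $\sigma$ lands in $z\cdot\mathbb{Z}[z]\oplus z\cdot\mathbb{Z}[z]$ with the sum of the two coordinates lying in the subgroup $\mathbb{Z}\subset z\cdot\mathbb{Z}[z]/z^2\cdot\mathbb{Z}[z]$. Using Theorem~\ref{intersection}, $\lambda(f_i,f_i)=\mu(f_i)+\overline{\mu(f_i)}+\iota(\chi(f_i))$, so after passing to the abelianisation $H_1(M_i)\cong\mathbb{Z}$ and modding out by cusp homotopies (which adjust $\chi$ by $\pm 2$ and shift $\sigma_i$ by integers), each $\sigma_i(f)$ is a $\bar{\cdot}$-invariant element of $\mathbb{Z}[\mathbb{Z}]$ with vanishing augmentation. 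The $\bar{\cdot}$-symmetric elements of the augmentation ideal in $\mathbb{Z}[\mathbb{Z}]$ are freely generated as an abelian group by the powers of $z=2-t-t^{-1}$, placing $\sigma_i(f)$ in $z\cdot\mathbb{Z}[z]$. The coupling constraint (sum of the two coordinates landing in the designated $\mathbb{Z}$) is Kirk's original relation; I would verify it by analysing how the basic local moves (finger, Whitney, cusp) move $\sigma_1$ and $\sigma_2$ in tandem, with the leading coefficient mod $z^2$ controlled by a single integer.

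Second, for surjectivity onto the kernel of the addition map I would invoke the Jin--Kirk construction. The Whitehead link example with $\sigma(FR)=(t+t^{-1}-2,\,2-t-t^{-1})=(-z,\,z)$ lies in the kernel, and by varying the link used as input to the JK-construction (for instance, by iterating Bing doubles or by Whitehead-doubling components with extra clasps) one produces link maps whose Kirk invariants realise $(-nz,nz)$ and more generally any element of $z\cdot\mathbb{Z}[z]\oplus z\cdot\mathbb{Z}[z]$ whose coordinate sum vanishes mod $z^2$. Since connect sum is well defined on $\LM_{2,2}^4$ and induces addition of Kirk invariants, linear combinations suffice to hit the entire kernel.

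Third, and most substantively, I would prove that $\sigma$ is injective. Suppose $\sigma(f)=0$. Using Proposition~\ref{FQ}, $f$ is equivalent through link-homotopy generators to an immersion whose self-intersections come in algebraically cancelling pairs in $\mathbb{Z}H_1(M_i)$. Vanishing of $\sigma_i(f)$ means such pairs bound Whitney disks in $M_i$ after a sequence of cusp moves normalises the Euler numbers. The obstruction to actually performing the Whitney moves is the secondary intersections of these Whitney disks with $f_i$ itself and with the transverse component; using the Bartels--Teichner argument that $\LM_{2,2}^4$ is a group, one can stabilise by adding an inverse so that all secondary obstructions become trivial in the abelian quotient, and then iteratively push intersections off via higher-order Whitney towers. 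The key input is that in the two-component setting only the abelian fundamental group of the complement participates in the obstruction, so the Milnor-type indeterminacies collapse and the tower can actually be completed. The main obstacle is precisely this injectivity step: the geometric argument controlling the Whitney tower requires delicate bookkeeping of how cusp, finger, and Whitney moves interact with basepoints and with the other component, and is where the full strength of the Schneiderman--Teichner analysis is needed. Once injectivity is established, exactness at the middle term follows from the augmentation/symmetry analysis in the first paragraph, and exactness at $\mathbb{Z}$ is immediate from realisability of $(-z,z)$.
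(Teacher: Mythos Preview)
The paper does not give its own proof of this theorem. It is stated as a result of Schneiderman and Teichner, attributed to \cite{ST}, with the preceding sentence ``Schniederman and Teichner proved the following result which classified two-component link maps.'' There is therefore nothing in the paper to compare your proposal against.

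As for the proposal itself: your three-stage outline (image constraints, realisability via JK, injectivity) is the right shape and matches the architecture of the Schneiderman--Teichner argument you cite. A small wording issue in your first paragraph: you say the sum of the two coordinates should ``land in the designated $\mathbb{Z}$'', but exactness at the middle term means the sum must vanish in $\mathbb{Z}=z\mathbb{Z}[z]/z^2\mathbb{Z}[z]$, not merely lie there. More substantively, your injectivity sketch is honest about where the real content lies but is not itself a proof: the phrase ``the tower can actually be completed'' hides the entire Schneiderman--Teichner argument, which requires their metabolic-form / Whitney-sphere machinery rather than a generic Whitney-tower iteration. If you intend this as a pointer to \cite{ST} rather than a self-contained argument, that is consistent with how the paper treats the result.
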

Schniederman and Teichner show that $\LM_{2,2}^4$ has an interesting module structure.
\begin{theo}\label{2-compmod}
Let $R:=\mathbb{Z}\left[z_1,z_2\right]/\left(z_1z_2\right)$. Then $\LM_{2,2}^4$ is a free $R$-module of rank $1$ and as a $\mathbb{Z}$-module is free of infinite rank.
\end{theo}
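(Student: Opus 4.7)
The plan is to deduce everything from the classification in Theorem \ref{clasif}, which via the injective Kirk map $\sigma$ identifies $\LM_{2,2}^4$ with the subgroup
\[
K \;:=\; \{(a,b)\in z\mathbb{Z}[z]\oplus z\mathbb{Z}[z] : a+b\in z^2\mathbb{Z}[z]\}\;\subset\; z\mathbb{Z}[z]\oplus z\mathbb{Z}[z].
\]
The proof then becomes a matter of transporting a natural $R$-action through $\sigma$, exhibiting a generator, and checking freeness.

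First I would define an action of $R$ on $z\mathbb{Z}[z]\oplus z\mathbb{Z}[z]$ by letting $z_i$ act as multiplication by $z$ on the $i$th summand and by zero on the other. Because each operator vanishes on the image of the other, the relation $z_1 z_2=0$ is satisfied and the action descends to $R$. Multiplication by $z$ sends $z\mathbb{Z}[z]$ into $z^2\mathbb{Z}[z]$, so $K$ is preserved, and pulling back along $\sigma$ gives an $R$-module structure on $\LM_{2,2}^4$.

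Next I would take the Fenn-Rolfsen link map $FR$, which by the worked example satisfies $\sigma(FR)=(-z,z)$, and argue it is a free generator. For generation, decompose any $(a,b)\in K$ as
\[
(a,b)\;=\;-a_1(-z,z)\;+\;(a-a_1 z,\,0)\;+\;(0,\,b+a_1 z),
\]
where $a_1$ denotes the coefficient of $z$ in $a$; the kernel condition $a_1=-b_1$ forces the last two summands to lie in $z^2\mathbb{Z}[z]\oplus 0$ and $0\oplus z^2\mathbb{Z}[z]$ respectively, which are hit by integer combinations of $z_1^n\cdot\sigma(FR)=(-z^{n+1},0)$ and $z_2^n\cdot\sigma(FR)=(0,z^{n+1})$ for $n\geq 1$. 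For freeness, put any $p\in R$ in its canonical normal form $p=c_0+\sum_{n\geq 1}c_{1,n}z_1^n+\sum_{n\geq 1}c_{2,n}z_2^n$ (using $z_1z_2=0$) and compute
\[
p\cdot(-z,z)\;=\;\Bigl(-c_0 z-\sum_{n\geq 1}c_{1,n}z^{n+1},\; c_0 z+\sum_{n\geq 1}c_{2,n}z^{n+1}\Bigr),
\]
which vanishes precisely when every coefficient of $p$ does. This gives $\LM_{2,2}^4\cong R$ as $R$-modules, and since $R$ admits the $\mathbb{Z}$-basis $\{1\}\cup\{z_1^n:n\geq 1\}\cup\{z_2^n:n\geq 1\}$, it is free of countably infinite $\mathbb{Z}$-rank.

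The main obstacle is to match this abstractly-defined $R$-action with the geometric one that Schneiderman and Teichner intend; presumably $z_i$ acts by a Fenn-Rolfsen-style connect sum which alters only the self-intersection of the $i$th component. Verifying via additivity of the Kirk invariant under connect sum that this geometric operation multiplies $\sigma_i$ by $z$ while fixing the other coordinate is the crux that bridges the algebraic and geometric viewpoints; once that identification is established, the calculation above completes the proof.
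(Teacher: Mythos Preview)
The paper does not supply its own proof of this theorem; it is quoted as a result of Schneiderman and Teichner, so there is no in-paper argument to compare against.

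Your argument is correct. Granting Theorem~\ref{clasif}, you correctly identify $\sigma(\LM_{2,2}^4)$ with the kernel $K$, your $R$-action on $z\mathbb{Z}[z]\oplus z\mathbb{Z}[z]$ is well defined and preserves $K$, and the generation and freeness computations for $(-z,z)=\sigma(FR)$ are accurate. The normal form $c_0+\sum_{n\ge 1}c_{1,n}z_1^n+\sum_{n\ge 1}c_{2,n}z_2^n$ for $R$ simultaneously handles freeness of the annihilator and the infinite $\mathbb{Z}$-rank claim.

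Your closing paragraph is more caution than is strictly required. The theorem as stated in this paper does not name a particular $R$-module structure on $\LM_{2,2}^4$; it only asserts the existence of one making it free of rank one. Your algebraic action, transported along $\sigma$, furnishes such a structure, so the proof is already complete without the geometric bridge you describe. In the Schneiderman--Teichner paper the action does have the geometric origin you guess at, and verifying that amounts to checking additivity of $\sigma$ under connect sum together with the elementary identity $\sigma_i(f\#_i FR)=z\cdot\sigma_i(f)$; but for the statement recorded here that is an enrichment rather than a gap.
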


\section{The Heisenberg group}\label{chap4}

In this section we develop the  commutator calculus and group theory necessary for our three-component invariant. We discuss Milnor groups which will be necessary for generalising the Kirk invariant since the $n$-component invariant Kirk developed is based in homology which loses too much geometric information due to homology being abelian.

\subsection{Milnor groups and commutator calculus}\label{ff3}
\begin{defn}\normalfont
Let $G$ be a group normally generated by elements $x_1,\ldots, x_n\in G$. Then we call the group
\begin{align*}
MG:=G/\left<\left<\left[x_i, gx_ig^{-1}\right]\right>\right>\ 1\leq i\leq n,\ \forall g\in G\\
\end{align*}
the \emph{Milnor group of $G$}. We call $G$ a \emph{Milnor group} if $G=MG$.
\end{defn}

\begin{rem}
There is still a question about if the choice of normal generators affects the resulting quotient group. However, for any choice of normal generators we will choose $x_1,\ldots, x_n$ for a group $G$, any other  set of normal generators we choose $x_1^\prime,\ldots,x_n^\prime$  will have the property $x_i^\prime=gx_ig^{-1}$ for some $g\in G$. This will guarantee that this quotient is fixed for our purposes as we will be choosing the meridians of a link as our choice of normal generators of the fundamental group of the complement of the link map\footnote{The author tried to find a reference to show that the Milnor quotient is independent of the choice of normal generators and was unable to find one. However, he did find that Freedman and Teichner in \cite{DT}, define Milnor groups relative to a choice of normal generators, this suggest they are either possibly different or it is not known if all such quotients are equal.}.
\end{rem}

Milnor used such groups in the study of classical links up to link homotopy in \cite{LGM}, where he was able to classify three-component links up to link homotopy. We will see that for three-components, one dimension up, we only care about about one particular Milnor group. Before studying this Milnor group we will first discuss commutator calculus and lower central series quotients.

Let $F(n)$ be the free group on $n$ generators. Consider the free group on two generators $F:=F(2)$, and its lower central series quotient $F/F_3$. 
\begin{lem}
The group $F/F_3$ admits the presentation 
\[
\left<x, y\,|\left[x,\left[x,y\right]\right],\, \left[y,\left[x,y\right]\right]\right>.
\]
\begin{proof}
Let $N$ be the normal subgroup generated by $\left[x, \left[x, y\right]\right]$ and $\left[y,\left[x, y\right]\right]$. It is clear that $N\subseteq F_3$. We wish to show the relations defining $N$ imply that $F_3\subseteq N$. First, let us prove that for any $\omega\in F$ we have
\begin{equation}\label{F/F_3cong}
\left[\omega, \left[x,y\right]\right]\equiv 1 \text{ mod } N\text{.}
\end{equation}
We do this by induction of the length of the word $\omega$. It is clear in the cases where $\omega$ is equal to $x$ or $y$. Checking their inverses we have
\[
\left(x(\left[x^{-1}, \left[x,y\right]\right]x^{-1}\right)^{-1}=\left[x,\left[x,y\right]\right].
\]
Hence
\[
\left[x^{-1}, \left[x,y\right]\right]\equiv 1 \text{ mod } N.
\]
The case for $y^{-1}$ is similar.
Now assume that the result holds true for words of length $n-1$. Let $\omega$ be a word of length $n$. Now $\omega=g\omega^\prime$ where $g$ is equal to $x$, $y$ $x^{-1}$ or $y^{-1}$ and $\omega^\prime$ is a word of length $n-1$. Assume that $g=x$; the proofs of the other cases are similar. Using results from \cite{CGT} and the inductive hypothesis we have
\begin{align*}
\left[x\omega^\prime, \left[x, y\right]\right]&=\left[x,\left[\omega^\prime, \left[x,y\right]\right]\right]\left[\omega^\prime, \left[x, y\right]\right]\left[x,\left[x,y\right]\right]\\
&\equiv 1 \text{ mod } N  
\end{align*}

Every element of $F_2$ is a product of $\left[x,y\right]$, its inverse, and their conjugates. So any element can be expressed as a product of these commutators and as such they can be expressed as some minimum length as a product of commutators. We will prove that $\left[\omega, K\right]\in N$, where $K$ is some product of commutators, by induction. The base case is done by checking that all commutators of the form $h\left[x, y\right]\bar{h}$, for some $h\in F$. Applying congruence \eqref{F/F_3cong}, we have \[
\left[\omega, h\left[x,y\right]h^{-1}\right]=h\left[h^{-1}\omega h,\left[x, y\right]\right]h^{-1}\equiv 1 \text{ mod } N.
\]
Similarly, 
\[
\left[\omega, h\left[y,x\right]h^{-1}\right]=h\left[h^{-1}\omega h,\left[y, x\right]\right]h^{-1}\equiv 1 \text{ mod } N.
\]
Assume this holds true for any element in $F_2$ that can be written as product of $n-1$ $\left[x,y\right]$, its inverse, and their conjugates. Let $K$ be an element of $F_2$ which can be written $n$ elements of such commutators. Then we have $K=LK^\prime$ where $L$ is equal to $h\left[x,y\right]h^{-1}$ or $h\left[y,x\right]h^{-1}$. Again assume $L=h\left[x,y\right]h^{-1}$; the proof of the other case is similar. Using results from \cite{CGT} and the inductive hypothesis we have
\[
\left[\omega, LK\right]=\left[\omega, L\right]\left[L,\left[\omega, K^\prime\right]\right]\left[\omega,K^\prime\right]
\equiv 1 \text{ mod } N\text{.}
\]
Hence, $F_3\subseteq N$ as required.
\end{proof}
\end{lem}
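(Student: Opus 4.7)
The plan is to let $N$ denote the normal closure of $\{[x,[x,y]],\ [y,[x,y]]\}$ in $F$ and prove that $N=F_3$. Since both generators of $N$ are triple commutators, the inclusion $N\subseteq F_3$ is immediate, so the real content is the reverse inclusion $F_3\subseteq N$. Because $F_2=[F,F]$ is normally generated by $[x,y]$ and $F_3=[F,F_2]$, it suffices to show that every commutator of the form $[\omega, K]$ with $\omega\in F$ and $K\in F_2$ lies in $N$.

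First I would carry out an induction on word length to establish the key congruence
\[
[\omega,[x,y]]\equiv 1\pmod{N}\qquad\text{for every }\omega\in F.
\]
The base cases $\omega\in\{x,y\}$ are literally the defining relators of $N$, and the cases $\omega\in\{x^{-1},y^{-1}\}$ follow by inverting, since $x[x^{-1},[x,y]]x^{-1}=[x,[x,y]]^{-1}$. For the inductive step on a word $\omega=g\omega'$ of length $n$, I would apply the standard commutator expansion $[gh,c]=g[h,c]g^{-1}\cdot[g,c]$ together with the Hall--Witt style identity $[g,[\omega',[x,y]]]$ vanishes mod $N$ (since it is a triple commutator with inner factor already known to be trivial mod $N$). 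Each of the three resulting factors lies in $N$ by the inductive hypothesis or by a base case.

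Next I would upgrade this to arbitrary $K\in F_2$ by a second induction, this time on the minimal number of factors needed to write $K$ as a product of conjugates $h[x,y]^{\pm 1}h^{-1}$. For the base case, I would use the identity $[\omega,h[x,y]h^{-1}]=h[h^{-1}\omega h,[x,y]]h^{-1}$, which reduces to the congruence established above; the same trick handles the $[y,x]=[x,y]^{-1}$ case. For the inductive step, writing $K=LK'$ and expanding
\[
[\omega, LK']=[\omega,L]\cdot L[\omega,K']L^{-1}
\]
(or equivalently the identity used in the statement), each factor sits in $N$ by the previous step or by the inductive hypothesis.

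I expect the main obstacle to be bookkeeping: the commutator calculus identities from \cite{CGT} need to be applied in the correct order, and one must be careful that when rewriting $[\omega,L]\equiv 1\pmod N$ after conjugation by $L^{-1}$, the conjugate still represents a valid instance of the induction hypothesis (which is where the \emph{normality} of $N$ is essential). Once this is handled, conclude that $F_3=[F,F_2]\subseteq N$, giving $F_3=N$ and hence the stated presentation.
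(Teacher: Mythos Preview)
Your proposal is correct and follows essentially the same approach as the paper: both prove $N=F_3$ by first establishing $[\omega,[x,y]]\equiv 1\pmod N$ via induction on word length, then upgrading to $[\omega,K]\equiv 1\pmod N$ for all $K\in F_2$ via induction on the number of conjugate-commutator factors, using the same base-case identity $[\omega,h[x,y]h^{-1}]=h[h^{-1}\omega h,[x,y]]h^{-1}$. The only cosmetic difference is which commutator expansion you invoke at the inductive steps, but these are equivalent modulo conjugation and the normality of $N$, which you rightly flag as the key point.
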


\begin{lem}\label{MF3}
 The groups $MF$ and $F/F_3$ are isomorphic.
 \begin{proof}
 Let $N$ be the normal subgroup we quotient $F$ with to get $MF$. We have the following equivalence 
 \[
 \left(\left(\left[x, \left[x,y\right]\right]\right)^{\left[y,x\right]}\right)^{-1}=\left[x, yxy^{-1}\right]\text{.}
\]
 Thus, adding the $\left[x, \left[x, y\right]\right]=1$ relation is equivalent to adding the relation $\left[x, yxy^{-1}\right]=1$. Similarly, adding the relation $\left[y, \left[y, x\right]\right]=1$ is equivalent to $\left[y, xyx^{-1}\right] =1$. Thus, we have $F_3\subseteq N$.  We need to show $\left[x, \omega x\omega^{-1}\right]=1$ and $\left[y, \omega y \omega^{-1}\right]=1$ hold for any reduced word $\omega$ when the relations for $F/F_3$ are added. We will do this by induction on the word length of $\omega$ where the base case is the relations given by $F/F_3$. Let $k$ be one of the generators $x$, $y$ or their inverses. Let $\omega=k\omega^\prime$, where $\omega$ and $\omega^\prime$ are freely reduced words of length $k$ and $k-1$ respectively. Using commutator calculus results in \cite{CGT} we have
\[
 \left[x, \omega x\omega^{-1}\right]=\left[x, k\right]\left[k, \left[x, \omega^\prime x\omega^{\prime-1}\right]\right]\left[x, k^{-1}\right]\text{ mod } F_3 \\
\]
By the inductive hypothesis the central term vanishes and we have
\begin{align*}
\left[x, \omega x\omega^{-1}\right]&\equiv \left[x, k\right]\left[x, k^{-1}\right]\text{ mod } F_3\\
&\equiv\left[x,k\right]\left[k, x\right]\text{ mod } F_3\\
&\equiv 1 \text{ mod } F_3
\end{align*}
Since $F_3\subseteq N$, these relations still hold mod $N$ and thus $N=F_3$.
  \end{proof}
 \end{lem}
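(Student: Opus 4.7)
The plan is to prove both containments of normal subgroups: if $N \triangleleft F$ is the normal subgroup generated by the Milnor relators $\{[x,\omega x\omega^{-1}], [y,\omega y\omega^{-1}] : \omega \in F\}$, then $N = F_3$, so $MF = F/N = F/F_3$. The previous lemma supplies a finite presentation $F/F_3 = \langle x,y \mid [x,[x,y]], [y,[x,y]]\rangle$, so one direction reduces to showing that those two specific words lie in $N$, and the other direction reduces to showing every Milnor relator lies in $F_3$.

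For $F_3 \subseteq N$, I would observe that $[x,[x,y]]$ is, up to conjugation and inversion, the Milnor relator $[x, yxy^{-1}]$. Explicitly, conjugating $[x,[x,y]]$ by $[y,x]$ and inverting yields $[x, yxy^{-1}]$, so since $N$ is normal and closed under inversion, $[x,[x,y]] \in N$. The symmetric identity with roles of $x,y$ swapped gives $[y,[x,y]] \in N$. Together these generate $F_3$ as a normal subgroup (by the previous lemma), so $F_3 \subseteq N$.

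For $N \subseteq F_3$, I would show by induction on the word length of $\omega$ that $[x, \omega x\omega^{-1}] \in F_3$ for every reduced word $\omega$ in $F$ (the analogous claim for $y$ follows by symmetry). The base case $\omega$ a single generator $k \in \{x^{\pm 1}, y^{\pm 1}\}$ covers the defining relators of $F/F_3$ (and trivially for $k = x^{\pm 1}$). For the inductive step, write $\omega = k\omega'$ and use the standard Hall commutator identity
\[
[x, \omega x\omega^{-1}] = [x, k(\omega' x \omega'^{-1})k^{-1}]
\equiv [x,k]\,[k,[x,\omega'x\omega'^{-1}]]\,[x,k^{-1}] \pmod{F_3}.
\]
The middle term vanishes modulo $F_3$ by the inductive hypothesis (its inner commutator already lies in $F_3$, and taking another commutator bumps it into $F_4 \subseteq F_3$). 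The remaining factor $[x,k][x,k^{-1}]$ equals $[x,k][k,x]$ modulo $F_3$ (conjugation is trivial mod $F_3$ on elements of $F_2$), and $[x,k][k,x] = 1$. Hence $[x, \omega x\omega^{-1}] \in F_3$ as claimed.

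The only nontrivial step is the commutator expansion in the inductive step; the rest is bookkeeping. I would draw the expansion identity from the commutator calculus reference already cited in the paper \cite{CGT} rather than rederive it. Combining both containments yields $N = F_3$ and hence $MF \cong F/F_3$.
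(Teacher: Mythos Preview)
Your proposal is correct and follows essentially the same approach as the paper: the same conjugation identity to get $F_3\subseteq N$, the same induction on word length with the same commutator expansion from \cite{CGT} to get $N\subseteq F_3$. Your write-up is slightly more explicit about why the middle term vanishes (pushing into $F_4$) and frames the two containments more cleanly, but the argument is identical.
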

\subsection{The Heisenberg group and its homology}
 We  show that $F/F_3$ is isomorphic to a group of integral matrices and use this to to compute the homology of the group. First, we define what we mean by the homology of a group.

\begin{defn}\normalfont
Let $G$ be a group. Then the group homology and cohomology of $G$ are defined to be 
\[
H_*(G;\mathbb{Z}):=H_*(\textbf{B}G;\mathbb{Z}),\,\,\,\,\,\
H^*(G;\mathbb{Z})=H^*(\textbf{B}G; \mathbb{Z}),
\]
where $\textbf{B}G$ is the classifying space.
\end{defn}

 We will work exclusively with discrete finitely presented groups. Their classifying spaces are Eilenberg-Maclane spaces, $K(G, 1)$ i.e. connected CW complexes with fundamental group equal to $G$ and all higher homotopy groups vanishing. 
 
 \begin{defn}\normalfont
 We call the group of matrices 
 \[
 H=\left\{\begin{pmatrix}
 1&a &c\\
 0&1& b\\
 0&0&1
 \end{pmatrix}\bigg| a,b,c\in\mathbb{Z}\right\}\subseteq SL(3,\mathbb{Z})\text{,}
 \]
the (integral) Heisenberg group.
 \end{defn}

 We will show that both $F/F_3$ and $H$ are isomorphic by making use of a normal form for a presentation of $F/F_3$.
 
 \begin{defn}\normalfont
 Let $F(n)$ be the free group on $n$ generators and $G$ be a group such that there exist a surjective homomorphism $\pi:F(n)\rightarrow G$. Then a \emph{normal form} is an injective map $s:G\rightarrow F(n)$ such that $\pi\circ s=I_G$.
 \end{defn}
We will first show the following result. 
\begin{lem}\label{mat}
 Let  
 \[
 X=\begin{pmatrix}
 1&0&0\\
 0&1&1\\
 0&0&1
 \end{pmatrix},\,\,
 Y=\begin{pmatrix}
 1&1&0\\
 0&1&0\\
 0&0&1
 \end{pmatrix},\,\,
 Z=\begin{pmatrix}
 1&0&-1\\
 0&1&0\\
 0&0&1
 \end{pmatrix}\in H.
 \]
 Then
 \[
 \begin{pmatrix}
 1&a &-c\\
 0&1& b\\
 0&0&1
 \end{pmatrix}=X^aZ^cY^b.
 \]
 \begin{proof}
 This can be checked directly by matrix multiplication.
 \end{proof}
 \end{lem}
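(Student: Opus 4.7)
The plan is to reduce the identity to a short explicit matrix computation, organized around the observation that $Z^{c}$ is central in $H$.

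First I would compute $X^{a}$, $Y^{b}$, and $Z^{c}$ separately. Since each of $X$, $Y$, $Z$ is an upper-triangular unipotent matrix with a single nonzero off-diagonal entry, a one-line induction on the exponent gives
\[
X^{a} = \begin{pmatrix} 1 & 0 & 0 \\ 0 & 1 & a \\ 0 & 0 & 1 \end{pmatrix}, \qquad Y^{b} = \begin{pmatrix} 1 & b & 0 \\ 0 & 1 & 0 \\ 0 & 0 & 1 \end{pmatrix}, \qquad Z^{c} = \begin{pmatrix} 1 & 0 & -c \\ 0 & 1 & 0 \\ 0 & 0 & 1 \end{pmatrix}
\]
for every $a, b, c \in \mathbb{Z}$, where the negative exponents are handled by inverting (or by re-running the induction on $-a$). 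These three formulas are the only real input.

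Second, I would record the centrality of $Z$ in $H$: multiplying a generic Heisenberg element on the left or right by $Z$ only alters the $(1,3)$-entry by $-1$, so $Z$ commutes with every element of $H$ and in particular with $X^{a}$ and $Y^{b}$. This is not strictly necessary for the final multiplication but it clarifies why the ordering of the factor $Z^{c}$ in the product $X^{a}Z^{c}Y^{b}$ is immaterial.

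Finally, I would multiply $X^{a}Z^{c}Y^{b}$ in two steps and read off the entries. The first product $X^{a}Z^{c}$ simply places $a$ in the $(2,3)$ slot and $-c$ in the $(1,3)$ slot, and right-multiplication by $Y^{b}$ then fills in the $(1,2)$ slot without creating any new contribution to the upper-right corner (because the only entry of the left factor's first row that could feed into that corner lies in column $2$, and $Y^{b}$ has a zero in position $(2,3)$). Comparing with the right-hand side of the statement finishes the proof. The only real obstacle is bookkeeping: one must be careful about which generator produces which off-diagonal entry, since the naming convention in the lemma is not dictated by position.
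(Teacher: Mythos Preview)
Your approach is exactly the paper's: a direct matrix computation. Your formulas for $X^{a}$, $Y^{b}$, $Z^{c}$ are correct, and the observation that $Z$ is central is accurate.

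However, your final sentence ``comparing with the right-hand side of the statement finishes the proof'' glosses over a discrepancy you should have caught. Carrying out the product as you describe gives
\[
X^{a}Z^{c}Y^{b}
=\begin{pmatrix}1&0&-c\\0&1&a\\0&0&1\end{pmatrix}
\begin{pmatrix}1&b&0\\0&1&0\\0&0&1\end{pmatrix}
=\begin{pmatrix}1&b&-c\\0&1&a\\0&0&1\end{pmatrix},
\]
so the $(1,2)$-entry is $b$ and the $(2,3)$-entry is $a$, the reverse of what the lemma claims. In other words the lemma as printed has the roles of $a$ and $b$ (equivalently, of $X$ and $Y$) swapped. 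This is a harmless typo for the paper's purposes---the only use of the lemma is to show that distinct normal-form words $x^{a}z^{c}y^{b}$ map to distinct matrices, and that conclusion survives the swap---but your write-up should either flag the typo or state the corrected identity rather than assert that the comparison goes through.
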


 \begin{prop}
 The Heisenberg group and $F/F_3$ are isomorphic.
 \begin{proof}

 Consider $F/F_3$ with the presentation 
 \[
 \left<x, y, z|\left[x,z\right],\,\left[y,z\right],\,\left[x,y\right]=z\right>.
 \]
 Let $\omega$ be a fully reduced word in $x$, $y$ and $z$. Using the relations we can collect all of the $z$ or $z^{-1}$ to the back resulting in $\omega=\omega^\prime z^t$, where $\omega^\prime$  is some word in $x$ and $y$. We can move each $y$ and $y^{-1}$ to the back by passing using the relations, introducing more $z$ and $z^{-1}$. Eventually, we have the word 
 \[
 x^uz^wy^u=\omega\in F/F_3
 \]
 for some $u, v, w\in\mathbb{Z}$. Define a homomorphism $\phi:F/F_3\rightarrow H$ by specifying that $\phi(x)=X$, $\phi(y)=Y$ and $\phi(z)=Z$. This is well defined since 
 \[
 \phi\left(\left[x,z\right]\right)=\phi\left(\left[y, z\right]\right)=\phi\left(\left[x,y\right]z^{-1}\right)=1
 \]
 and is clearly surjective so all that remains to be proven is injectivity. Suppose the map failed to be injective then we would be able to find $a$,$b$, $c$, $a^\prime$, $b^\prime$ $c^\prime\in\mathbb{Z}$ such that $x^az^cy^b\neq x^{a^\prime}z^{c^\prime}y^{b^\prime}$ and
 \[
 \phi\left(x^az^cy^b\right)=\phi\left(x^{a^\prime}z^{c^\prime}y^{b^\prime}\right),
 \]
 and by Lemma \ref{mat} we have
 \[
  \begin{pmatrix}
 1&a &-c\\
 0&1& b\\
 0&0&1
 \end{pmatrix}= \begin{pmatrix}
 1&a^\prime &-c^\prime\\
 0&1& b^\prime\\
 0&0&1
 \end{pmatrix}.
 \]
 Hence we have $a=a^\prime$, $b=b^\prime$ and $c=c^\prime$. This contradicts $x^az^cy^b\neq x^{a^\prime}z^{c^\prime}y^{b^\prime}$ and completes the proof.  
 \end{proof}
 \end{prop}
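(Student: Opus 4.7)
The plan is to exhibit an explicit isomorphism $\phi: F/F_3 \to H$ sending the chosen generators to the matrices $X, Y, Z$ from Lemma \ref{mat}, and verify it by combining a normal-form argument in $F/F_3$ with the faithfulness already encoded in that lemma.

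First, I would adopt the presentation $\langle x,y,z \mid [x,z],\,[y,z],\,[x,y]z^{-1}\rangle$ for $F/F_3$ (justified by setting $s = [x,y]$ in the presentation given in the introduction) and establish a normal form lemma: every element of $F/F_3$ is expressible as $x^a z^c y^b$ for some integers $a,b,c$. To do this I would argue by induction on word length. The relations $[x,z]=1=[y,z]$ say that $z$ is central, so any $z^{\pm 1}$ may be slid freely; the relation $[x,y]=z$ (equivalently $yx = xyz^{-1}$ and $y^{-1}x = xy^{-1}z$) lets me commute each $y^{\pm 1}$ past each $x^{\pm 1}$ at the cost of producing a central $z^{\pm 1}$. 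Iterating these moves collects all $x$-letters on the left, all $y$-letters on the right, and all $z$-letters between them.

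Next, I would define $\phi: F/F_3 \to H$ on generators by $\phi(x) = X$, $\phi(y) = Y$, $\phi(z) = Z$, and check well-definedness by verifying the three relations in $H$ directly: the matrices $X$ and $Z$ commute, as do $Y$ and $Z$ (both $Y$ and $X$ have a single nonzero off-diagonal entry that does not interact with the top-right entry of $Z$), and a short matrix computation gives $[X,Y] = Z$. Surjectivity of $\phi$ is then immediate from Lemma \ref{mat}, which states that every element of $H$ is of the form $X^a Z^c Y^b$.

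For injectivity, suppose $\omega \in \ker\phi$. By the normal form lemma write $\omega = x^a z^c y^b$; then $\phi(\omega) = X^a Z^c Y^b$ is the identity matrix. By Lemma \ref{mat} this matrix has $(1,2)$-entry $a$, $(2,3)$-entry $b$, and $(1,3)$-entry $-c$, so $a = b = c = 0$ and $\omega = 1$. The main obstacle, and the only non-formal step, is the normal form lemma, which requires a careful induction using centrality of $z$ and the $yx = xyz^{-1}$ identity; once this is in place, injectivity is essentially forced by the faithful matrix realisation from Lemma \ref{mat}, and the isomorphism follows.
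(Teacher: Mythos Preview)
Your proposal is correct and follows essentially the same approach as the paper: both establish the normal form $x^a z^c y^b$ using centrality of $z$ and the commutation relation, define $\phi$ on generators, and deduce injectivity from Lemma~\ref{mat}. Your injectivity argument via $\ker\phi$ is marginally cleaner than the paper's contradiction with two distinct normal-form words, but the substance is identical.
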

 This establishes that a normal form for $F/F_3$
\begin{lem}\label{normgen}
Let $S$ be the image of a generic $n$-component link map and $x_i$ be a meridian of the $i$th component. Then $x_1,\ldots x_n$ is a set of normal generators for the fundamental group of $S^4\setminus S$.
\begin{proof}
Define $X_S:=S^4\setminus \nu(S)$. To prove the result, we will give a Kirby diagram description of the normal bundle of $S$ and turn the handle decomposition upside down and glue to the boundary of $X_S$ to make $S^4$; from this it will be clear that we have a set of normal generators.
Consider the normal neighbourhood of the $i$th sphere (a description of the normal bundle is given in \cite[Section 6.1]{SG}). A Kirby diagram of a normal neighbourhood of an immersion is given by a single $0$-handle, a single $2$-handle, and $k$ many $1$-handles (one for each intersection on the immersed surface). Turning this upside down we have a single $2$-handle, $k$ many $3$-handles, and a single $4$-handle. We now attach the $2$-handle to the $i$th boundary component, where attaching sphere of the dual two handle is a meridian of the $i$th sphere. To see the $2$-handle attaches to the meridian of the sphere, notice that the $2$-handle of the plumbing is the thickened $2$-cell of the sphere and thus the dual handle from turning upside down the new attaching sphere is the meridian of the sphere.  The fundamental group of this new space once we have attched the two handle is $\pi_1(S^4\setminus \nu(S))/\left<\left<x_i\right>\right>$. Doing this for each sphere we recover $S^4$ which has trivial fundamental group. Hence the meridians are a set of normal generators for the complement as required.
\end{proof}
\end{lem}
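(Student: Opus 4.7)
The plan is to show that capping off each meridian $x_i$ in the complement recovers $S^4$, and then invoke the fact that $S^4$ is simply connected. Concretely, writing $X := S^4 \setminus \nu(S)$, I aim to exhibit $S^4$ as obtained from $X$ by attaching $2$-handles along the $x_i$ together with handles of index $\geq 3$; since attaching a $2$-handle along a loop kills that loop in $\pi_1$, and attaching $3$- and $4$-handles does not alter $\pi_1$, this yields the desired conclusion
\[
\pi_1(X)\,/\,\langle\langle x_1,\ldots,x_n\rangle\rangle\;\cong\;\pi_1(S^4)\;=\;1.
\]

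The first step is to describe a regular neighbourhood $\nu(f_i)$ of the $i$th generically immersed sphere as a four-dimensional handlebody. Since $f_i$ has, say, $k_i$ transverse self-intersections, the standard plumbing description gives a handle decomposition of $\nu(f_i)$ with one $0$-handle, $k_i$ many $1$-handles (one per double point, recording the plumbing at each self-intersection), and one $2$-handle whose core is the $2$-cell of the sphere itself. This is essentially the description the author points to in \cite[Section 6.1]{SG}.

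The second step is to turn this handle decomposition upside down, swapping $j$-handles for $(4-j)$-handles attached to $\partial \nu(f_i)$. This gives $\nu(f_i)$ a new description as one $2$-handle, $k_i$ many $3$-handles, and one $4$-handle, all attached to its boundary. Gluing these dual handles onto $X$ along the shared boundary component reassembles $S^4$. Doing this simultaneously for $i = 1, \ldots, n$ reconstructs all of $S^4$ from $X$ by attaching $n$ two-handles followed by handles of index $3$ and $4$.

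The third step, and the one requiring the most care, is to identify the attaching circle of each dual $2$-handle as the meridian $x_i$. In the original plumbing decomposition, the cocore of the single $2$-handle is a small disk transverse to the sphere meeting it once; its boundary circle is by definition a meridian of the $i$th sphere. Under the duality that turns the handlebody upside down, this cocore becomes the core of the new $2$-handle, so its attaching sphere in $\partial X$ is exactly $x_i$. The main obstacle is this identification: one must check that the turning-upside-down procedure sees the meridian precisely (up to isotopy in $\partial X$) rather than some conjugated or framed variant, but this is a standard consequence of the core/cocore duality in Morse/handle theory. Once this identification is secure, van Kampen applied to the sequence of handle attachments produces the required quotient isomorphism and hence the lemma.
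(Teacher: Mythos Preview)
Your proposal is correct and follows essentially the same argument as the paper: describe $\nu(f_i)$ as a plumbing with one $0$-handle, $k_i$ $1$-handles, and one $2$-handle; turn it upside down; identify the attaching circle of the dual $2$-handle as the meridian via core/cocore duality; and conclude by van Kampen that killing the meridians yields $\pi_1(S^4)=1$. Your write-up is in fact slightly more explicit than the paper's about why the dual attaching circle is the meridian (via the cocore disk), but the structure and ideas are identical.
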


\begin{lem}\label{fmr}
Let $f$ be a link map and a generic immersion. We can do finger moves on $f$ to get another link map $\tilde{f}$ such that 
\[
\pi_1\left(S^4\setminus \nu(\tilde{f})\right)\cong M\pi_1\left(S^4\setminus \nu(f)\right)\text{.}
\]
\begin{proof}
We can do self-finger moves on each component, using a finger which misses the other components, to introduce relations of the form $\left[\alpha, \beta\alpha\beta^{-1}\right]$, where $\alpha, \beta\in\pi_1(S^4\setminus \nu(f))$. Since Milnor groups are nilpotent and normally generated by finitely many elements, by Theorem  \ref{normgen}, they are finitely presented \cite{BT}. Hence, we only need to do finitely many to make the complement isomorphic to a Milnor group.
\end{proof}
\end{lem}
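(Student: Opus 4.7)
The plan is to realize the passage $\pi_1(S^4\setminus\nu(f))\to M\pi_1(S^4\setminus\nu(f))$ geometrically as a finite sequence of self-finger moves. I will proceed in three steps: first identify the Milnor relations with the effect of a single self-finger move on $\pi_1$, then establish that only finitely many such relations are needed, and finally concatenate the finger moves while preserving the link map condition.

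\textbf{Step 1 (geometric realization of one Milnor relation).} Fix a component $f_i$ and an element $\beta\in\pi_1(S^4\setminus\nu(f))$ represented by an embedded arc $\gamma$ in $S^4\setminus\nu(f)$ whose endpoints lie on two different local sheets of $f_i$. Pushing one sheet along $\gamma$ through the other performs a self-finger move on $f_i$; it creates two new self-intersection points of opposite sign and leaves the other components untouched. I will analyse $\pi_1$ of the new complement by a Van Kampen argument, splitting along the boundary of a regular neighbourhood of the Whitney disk associated to the finger. The two new meridians around the finger track are each conjugate to $x_i$, one by $1$ and the other by $\beta$, so the only new relation imposed is precisely $[x_i,\beta x_i\beta^{-1}]=1$. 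Hence each self-finger move realises exactly one Milnor-type relation, and since $\pi_2(S^4\setminus\nu(f))$-style obstructions are irrelevant here ($\gamma$ can be chosen to miss the other components by general position in dimension four), the result is still a link map.

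\textbf{Step 2 (only finitely many relations are needed).} By Lemma \ref{normgen}, the meridians $x_1,\ldots,x_n$ normally generate $\pi_1(S^4\setminus\nu(f))$, so the same is true of $M\pi_1(S^4\setminus\nu(f))$. Milnor groups of link complements are nilpotent: the defining relations $[x_i,gx_ig^{-1}]=1$ force the normal closures of the meridians to be abelian, and a standard induction on the lower central series (as in \cite{BT}) shows that any group normally generated by $n$ elements together with these relations has nilpotency class at most $n$. A finitely normally generated nilpotent group is finitely presented: each successive quotient $G_k/G_{k+1}$ is a finitely generated abelian group, and one lifts a finite presentation by induction on the class. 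Therefore $M\pi_1(S^4\setminus\nu(f))$ is obtained from $\pi_1(S^4\setminus\nu(f))$ by imposing finitely many relations of the form $[x_i,\beta x_i\beta^{-1}]=1$.

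\textbf{Step 3 (assembly).} For each of the finitely many relations $[x_i,\beta_k x_i\beta_k^{-1}]$ needed, choose an arc realising $\beta_k$ disjoint from all other components (possible since we work in $S^4$ and the $\beta_k$ live in $\pi_1(S^4\setminus\nu(f))$), and perform the corresponding self-finger move as in Step 1. The finger moves can be performed one after another in disjoint neighbourhoods, and none of them alters the other components, so the result is a link map $\widetilde{f}$. By construction $\pi_1(S^4\setminus\nu(\widetilde{f}))$ has exactly the Milnor relations added, giving the required isomorphism with $M\pi_1(S^4\setminus\nu(f))$.

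The main obstacle is Step 1, namely the Van Kampen computation showing that a self-finger move along $\gamma$ adds the single relation $[x_i,\beta x_i\beta^{-1}]=1$ and nothing more; the algebraic finiteness in Step 2 is standard once nilpotency is established, and Step 3 is then a routine concatenation.
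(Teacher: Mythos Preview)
Your proposal is correct and follows essentially the same approach as the paper's proof: self-finger moves introduce Milnor relations $[x_i,\beta x_i\beta^{-1}]$, and since the Milnor group is nilpotent and finitely normally generated it is finitely presented (citing \cite{BT}), so finitely many finger moves suffice. You have simply fleshed out the details---the Van Kampen description in Step~1 and the nilpotency argument in Step~2---that the paper leaves implicit or to the reference.
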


\begin{prop}\label{milncomp}
Let $f$ be a generic two-component link map then
\[
M\pi_1(S^4\setminus \nu(f))\cong F/F_3
\]
\end{prop}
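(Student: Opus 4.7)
The strategy is to construct a natural surjection from $F/F_3$ onto the target Milnor group, and then invoke Dwyer's theorem to prove it is injective.

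Choose basing paths for $f$ and let $x_1, x_2 \in \pi_1(S^4 \setminus \nu(f))$ be the corresponding based meridians of the two components. Define a homomorphism $\phi: F \to \pi_1(S^4 \setminus \nu(f))$ by sending the two free generators to $x_1$ and $x_2$. By Lemma \ref{normgen} these meridians normally generate, so the induced map on Milnor quotients $\bar\phi: MF \to M\pi_1(S^4 \setminus \nu(f))$ is surjective; combined with Lemma \ref{MF3} this provides the required surjection $F/F_3 \twoheadrightarrow M\pi_1(S^4 \setminus \nu(f))$, and all that remains is to prove $\bar\phi$ is injective.

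The next step is to identify each of the two Milnor groups with the class-$2$ nilpotent quotient of the underlying group. A direct commutator calculation, essentially the one used in Lemma \ref{MF3}, gives $[x_i, g x_i g^{-1}] \in \gamma_3$, so the Milnor normal subgroup $N$ is contained in $\gamma_3$. Conversely, because $M\pi_1(S^4 \setminus \nu(f))$ is a quotient of $MF \cong F/F_3$ it is class-$2$ nilpotent, forcing $\gamma_3 \subseteq N$. Hence the Milnor quotient coincides with the class-$2$ nilpotent quotient on both sides, and the claim is equivalent to showing that $\phi$ induces an isomorphism
\[
F/\gamma_3 F \xrightarrow{\cong} \pi_1(S^4 \setminus \nu(f))/\gamma_3.
\]

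At this point I would apply Dwyer's theorem, which states that such an isomorphism holds provided $\phi$ induces an isomorphism on $H_1$ and the image of $\phi_*: H_2(F) \to H_2(\pi_1)$ together with the Dwyer subgroup $\omega_3(\pi_1) \subseteq H_2(\pi_1)$ generates $H_2(\pi_1)$. The $H_1$ condition is immediate: both groups abelianize to $\mathbb{Z}^2$ with the meridians as canonical generators, by Alexander duality on the target. Since $H_2(F) = 0$, the remaining condition reduces to showing $\omega_3(\pi_1) = H_2(\pi_1)$. The main obstacle is verifying this: using the computation earlier in this section identifying $H_2(S^4 \setminus \nu(f))$ as generated by Clifford tori around the self-intersections, it suffices to show that each Clifford torus class lies in $\omega_3(\pi_1)$. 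A Clifford torus around a self-intersection of $f_i$ has both fundamental loops given by conjugate meridians $x_i$ and $g x_i g^{-1}$, so it represents, via the Hopf map $H_2(S^4 \setminus \nu(f)) \to H_2(\pi_1)$, a commuting-pair class of the form $[x_i, g x_i g^{-1}]$. Its image in $H_2(\pi_1/\gamma_3) = H_2(M\pi_1)$ therefore vanishes by the defining relation of the Milnor group, placing the Clifford torus class in $\omega_3(\pi_1)$ as required. With this geometric-to-algebraic translation in hand, Dwyer's theorem delivers the injectivity of $\bar\phi$ and completes the proof.
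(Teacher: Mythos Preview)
Your strategy via Dwyer's theorem and the Clifford-torus generators of $H_2$ is exactly the one the paper sketches in the introduction to Section~\ref{chap4}; the in-text proof simply defers to \cite{Mythesis} and mentions a Mayer--Vietoris argument, which is the homology computation feeding into Dwyer.

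There is, however, a genuine error in your final step. To conclude an isomorphism on $\gamma_3$-quotients, Dwyer's theorem requires the Clifford tori to lie in $\ker\bigl(H_2(\pi_1)\to H_2(\pi_1/\gamma_2)\bigr)=\ker\bigl(H_2(\pi_1)\to H_2(\pi_1^{\mathrm{ab}})\bigr)$, not in the kernel of the map to $H_2(\pi_1/\gamma_3)$ as you write. Your justification, that the torus class ``vanishes by the defining relation of the Milnor group,'' is incorrect: in the Hopf description a defining relation need not represent zero in $H_2$, and in fact if the desired conclusion $\pi_1/\gamma_3\cong F/F_3$ holds then a Clifford torus with loops $x_i$ and $x_jx_ix_j^{-1}$ maps to $\pm\langle x_i,[x_i,x_j]\rangle$, which is a generator of $H_2(F/F_3)\cong\mathbb{Z}^2$ and hence nonzero. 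The correct, and easier, observation is that both loops on the Clifford torus are conjugate meridians of the \emph{same} component and therefore become equal in $H_1$; the image in $H_2(\pi_1^{\mathrm{ab}})\cong\Lambda^2\mathbb{Z}^2$ is then $[x_i]\wedge[x_i]=0$. With this correction the Dwyer hypothesis is satisfied and your argument goes through.
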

\begin{proof}
The proof of this result can be found in \cite{Mythesis} and consists of a Mayer-Vietoris computation.
\end{proof} 

\begin{cor}
Let $f=f_1\sqcup f_2\sqcup f_3:S^2\coprod S^2\coprod S^2\rightarrow S^4$ be a generic link map. Then we can apply finger moves to $f$ to create a link map $\widetilde{f}_1\sqcup \widetilde{f}_2\sqcup\widetilde{f}_3:S^2\coprod S^2\coprod S^2\rightarrow S^4$ such that 
\[
\pi_1(S^4\setminus \nu(\widetilde{f}_i\cup \widetilde{f}_j))\cong F/F_3,
\]
where $i\neq j$.
\begin{proof}
By Lemma \ref{fmr}, finger moves can be applied to $f$ such that the complement of any two distinct components is a Milnor group. By Proposition \ref{milncomp}, this Milnor group is isomorphic to $F/F_3$.
\end{proof}
\end{cor}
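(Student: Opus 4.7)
The plan is to apply Lemma \ref{fmr} and Proposition \ref{milncomp} in tandem to each of the three two-component sublinks $(f_i, f_j)$, taking care that the finger moves done for one pair are compatible with those done for another. Specifically, for each unordered pair $\{i,j\} \subset \{1,2,3\}$, Lemma \ref{fmr} supplies a finite sequence of self-finger moves on $f_i$ and $f_j$ whose effect on $\pi_1(S^4 \setminus \nu(f_i \cup f_j))$ is to impose all Milnor relations $[\alpha, \beta\alpha\beta^{-1}]$ (with $\alpha$ a meridian of $f_i$ or $f_j$). By Proposition \ref{milncomp}, the resulting quotient is isomorphic to $F/F_3$.

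The compatibility observation is the following: a self-finger move on $f_i$ is supported in a small neighbourhood of a 1-dimensional arc lying in $S^4 \setminus \nu(f_i)$, and the only constraint for it to define a link homotopy of the two-component sublink $f_i \sqcup f_j$ is that this arc miss $f_j$. Since the third sphere $f_k(S^2)$ has codimension two in $S^4$, a generic perturbation of the finger arc is disjoint from $f_k$ as well. Hence each self-finger move prescribed by Lemma \ref{fmr} for the pair $(i,j)$ can be realised as a link homotopy of the three-component link map $f$, producing a new generic link map without changing the diffeomorphism type of the complement of any $f_\ell$ with $\ell \neq i,j$.

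We then perform all such self-finger moves, one pair at a time, to produce $\widetilde{f} = \widetilde{f}_1 \sqcup \widetilde{f}_2 \sqcup \widetilde{f}_3$. For each pair $(i,j)$, the complement $\pi_1(S^4 \setminus \nu(\widetilde{f}_i \cup \widetilde{f}_j))$ contains at least the Milnor relations required to agree with its Milnor quotient, which by Proposition \ref{milncomp} is $F/F_3$; thus $\pi_1(S^4 \setminus \nu(\widetilde{f}_i \cup \widetilde{f}_j)) \cong F/F_3$ as required.

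The main point to watch is the independence of the pairwise constructions: doing additional self-finger moves on $\widetilde{f}_1$ for the benefit of the pair $(1,3)$ will alter $\pi_1(S^4 \setminus \nu(\widetilde{f}_1 \cup \widetilde{f}_2))$ by further quotienting, and one must confirm that the resulting group is still $F/F_3$ rather than a proper further quotient. This is automatic because Proposition \ref{milncomp} identifies the Milnor quotient with $F/F_3$ for any generic two-component link map, and the new self-finger-move relations are precisely those already present in the Milnor group. Consequently the pairwise complements stabilise to $F/F_3$ after finitely many moves, which is exactly the content of the corollary.
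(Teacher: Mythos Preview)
Your proof is correct and follows the same approach as the paper: invoke Lemma~\ref{fmr} to realise the Milnor relations by self-finger moves, then apply Proposition~\ref{milncomp} to identify the resulting group with $F/F_3$. The paper's proof is two sentences and leaves implicit the two compatibility points you spell out --- that the finger arcs for one pair can be perturbed off the third sphere, and that later self-finger moves add only relations already present in the Milnor quotient so no over-quotienting occurs --- so your version is a more careful elaboration of the same argument rather than a different route.
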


\section{The three-component link map invariant}\label{chap5}

We now define our three-component invariant. We first do this in the case of based link maps and then explain how to make an invariant for unbased link maps.

\subsection{Based link maps}
Firstly, we choose the induced orientation on $S^n$ from the standard orientation on $\mathbb{R}^{n+1}$. Fix a basepoint $s_0\in S^4$ and
consider a generically immersed, based, link map $f=f_x\sqcup f_y\sqcup f_z:S^2_x\coprod S^2_y \coprod S_z^2\rightarrow S^4$, where the basing paths for each sphere are given by $\gamma^f_x$, $\gamma_y^f$ and $\gamma_z^f$ respectively. Homotop $\gamma_x^f$, fixing the end points, such that on $\left[1-\varepsilon, 1\right]$, for some $\varepsilon>0$, $\gamma$ lies in the fibre of the normal bundle above the point $\gamma_x^f(1)$. Let $U$ be an open subset of $f_x$, around $\gamma_x^f(1)$ which contains no intersections and admits a trivialisation of the normal bundle, $\phi:U\times D^2\rightarrow \nu(f_x)|_U$. We define a meridian by the following concatenation of paths: travel along the basepoint $\gamma_x^f$ from $x_0$ to $\gamma_x^f\left(1-\varepsilon\right)$, then travels along the generator of $\pi_1\left(U\times D^2\setminus U\times\{0\}\right)$ which has linking number $+1$ with $f_x$, and then travels back along $\gamma_x^f$ from $\gamma_x^f\left(1-\varepsilon\right)$ to the $x_0$. Call this meridian $x$. Define $y$ and $z$ to be similar meridians for the other respective components. These meridians are normal generators of $\pi_1\left(S^4\setminus \nu(f)\right)$ and thus are generators of $M\pi_1\left(S^4\setminus \nu(f)\right)$. Let $A_x, A_y$ and $A_z$ be the normal closures of $x$, $y$ and $z$ respectively, inside $M\pi_1\left(S^4\setminus \nu(f)\right)$. By Proposition \ref{milcomplement} we have
\begin{align*}
M\pi_1\left(S^4\setminus \nu(f)\right)/A_x&\cong\left<y, z, s| \left[y, s\right], \left[z, s\right],\left[y, z\right]s^{-1}\right>\\
M\pi_1\left(S^4\setminus \nu(f)\right)/A_y&\cong\left<z, x, t| \left[z, t\right], \left[x, t\right],\left[z, x\right]t^{-1}\right>\\
M\pi_1\left(S^4\setminus \nu(f)\right)/A_z&\cong\left<x, y, u| \left[x, u\right], \left[y, u\right],\left[x, y\right]u^{-1}\right>\text{.}
\end{align*}
as $M\pi_1\left(S^4\setminus\nu(f)\right)/A_i=M\pi_1\left(S^4\setminus\left(\nu(f_j)\sqcup\nu(f_k)\right)\right)$, where $i,j, k\in\{x,y,z\}$ and are pairwise distinct. We define
\begin{align*}
\Gamma_x&:=\left<y, z, s| \left[y, s\right], \left[z, s\right],\left[y, z\right]s^{-1}\right>\text{,}\\
\Gamma_y&:=\left<z, x, t| \left[z, t\right], \left[x, t\right],\left[z, x\right]t^{-1}\right>\text{,}\\
\Gamma_z&:=\left<x, y, u| \left[x, u\right], \left[y, u\right],\left[x, y\right]u^{-1}\right>\text{,}
\end{align*}
and $K:=\mathbb{Z}\Gamma_x\times\mathbb{Z}\Gamma_y\times\mathbb{Z}\Gamma_z$.

\begin{rem}
To clarify the above, we have taken a based link map and used the basing curves for each surface to define a meridian and then chosen isomorphisms to $F/F_3$ using the specified meridians.
\end{rem}

\begin{defn}
Let $f$ be a based three-component link map then
\[
\sigma^3(f)=\left(\sigma_x(f), \sigma_y(f), \sigma_z(f)\right):=\left(M\left(\lambda(f_x,f_x)\right),M\left(\lambda(f_y,f_y)\right),M\left(\lambda(f_z,f_z)\right)\right)\in K\text{.}
\]
\end{defn}
\begin{prop}\label{sigma}
The map $\sigma^3$ is a based link homotopy invariant.
\begin{proof}
By Proposition \ref{milncomp}, through a generic homotopy, the Milnor group of the fundamental group of the complement of two spheres, described by a link map, remains constant. We now check that under a generic homotopy the triple remains fixed.

Considering a link map $f$, we check that finger moves do not affect the intersection values. We first check that $\sigma_x(f)=M(\lambda\left(f_x, f_x\right))$ is not affected by any finger move on any component. The other components will follow similarly. It is clear that doing a finger move on the $x$ component does not affect the intersection number since $\lambda$ is a homotopy invariant. 

We now check that a finger move on the $y$ component does not affect $M\lambda(f_x, f_x)$. If we do a finger move on the $y$ sphere we change the fundamental group of the complement of $\pi_1(S^4\setminus \left(\nu(f_y)\coprod (f_z)\right))$ to $\pi_1(S^4\setminus \left(\nu(f_y)\coprod (f_z)\right))\big/\left<\left<\left[y, gyg^{-1}\right]\right>\right>$ for some $g\in \pi_1(S^4\setminus \left(\nu(f_y)\coprod (f_z)\right))$ \cite{Cas}. We thus have the following commutative diagram
\[
\begin{tikzcd}
\pi_1(S^4\setminus\left(\nu(f_y)\coprod (f_z)\right))\arrow[r]\arrow[d]& M\pi_1(S^4\setminus \left(\nu(f_y)\coprod (f_z)\right))\arrow[d, "\Id"]\\
\pi_1(S^4\setminus\left(\nu(f_y)\coprod (f_z)\right))\big/\left<\left<\left[y, gyg^{-1}\right]\right>\right>\arrow[r]&
M\left(\pi_1(S^4\setminus \left(\nu(f_y)\coprod (f_z)\right))\big/\left<\left<\left[y, gyg^{-1}\right]\right>\right>\right)\text{.}
\end{tikzcd}
\]
The horizontal maps are the Milnor quotient maps. Hence, group elements are not changed by  finger moves, since the finger move introduces relations already a consequence of the relations in $M\pi_1\left(S^4\setminus \left(f_y\sqcup f_z\right)\right)$. Hence, $M\lambda(f_x, f_x)$ is invariant under finger moves on the $y$ component. An analogous argument works for finger moves on the $z$ component. Hence $\sigma^3$ is invariant under finger moves.

Let $g$ be a generic immersed link map link homotopic to $f$. We can assume that that both maps have the same Euler number, since cusps do not change $\lambda$ nor the fundamental group of the complement. Let $F:\left(S^2_x\coprod S^2_y\coprod S^2_z\right)\times I\rightarrow S^4$ be a generic  homotopy such that $F(-, 0)=f$ and $F(-, 1)=g$. We may assume this is a regular homotopy since both Euler numbers are the same and by Proposition \ref{FQ} we may assume $F$ is a concatenations of finger moves, Whitney moves and isotopies. As finger moves are supported by an arc and Whitney moves are supported in the neighbourhood of a disc we may assume $F$ does all the finger moves when $t\in\left(0,\frac{1}{2}\right)$ and Whitney moves when $t\in\left(\frac{1}{2}, 1\right)$. By the earlier argument,  finger moves do not change the triple and thus the triple is equal on $f$ and $F(-, \frac{1}{2})$. Running the homotopy backwards from $g$ to $F(-,\frac{1}{2})$ is a sequence of finger moves, so the triple on $g$ and $F(-, \frac{1}{2})$ is the same. By transitivity, $\sigma^3$ is a homotopy invariant. 
\end{proof}
\end{prop}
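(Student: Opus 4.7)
The plan is to invoke Proposition \ref{FQ} to decompose any generic based link homotopy into a concatenation of isotopies, Whitney moves, cusp homotopies, and finger moves, and then verify invariance of $\sigma^3$ under each type of move. By symmetry it is enough to check that the first coordinate $\sigma_x(f) = M(\lambda(f_x, f_x))$ is preserved; analogous arguments handle $\sigma_y$ and $\sigma_z$. Note that $\sigma_x(f)$ depends on two pieces of data: the Milnor quotient of $\pi_1(S^4 \setminus \nu(f_y \sqcup f_z))$ together with its distinguished meridian generators, and the element $\lambda(f_x, f_x)$ in the corresponding group ring.

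For isotopies, Whitney moves, and cusp homotopies, I would observe that none of these alters the complement $S^4 \setminus \nu(f_y \sqcup f_z)$, so the ambient fundamental group is literally unchanged; meanwhile $\lambda$ itself is a homotopy invariant (cusp homotopies change $\mu$ and the Euler number but leave $\lambda$ fixed by Theorem \ref{intersection}). A finger move on $f_x$ itself again leaves the complement of the other two components untouched, so $\lambda(f_x, f_x)$ is preserved by homotopy invariance of $\lambda$.

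The genuinely interesting case, and the main obstacle, is a finger move on a different component, say $f_y$. Such a move changes $\pi_1(S^4 \setminus \nu(f_y \sqcup f_z))$ by adjoining a single relation of the form $[y, g y g^{-1}] = 1$ for some $g$ in that group. The key observation is that this relation is precisely of the type already imposed when forming the Milnor quotient, so the map between the two Milnor quotients induced by the finger move is an isomorphism carrying the meridians $x,y,z$ to themselves. Consequently, although $\lambda(f_x,f_x)$ now lives in a different integral group ring before quotienting, its image under $M$ agrees with the image of the old $\lambda(f_x,f_x)$, and $\sigma_x$ is unchanged. A fitting way to record this is a commutative square whose horizontal arrows are the Milnor quotients, with one vertical arrow the finger-move relation quotient and the other the identity.

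Concatenating these four invariances along the decomposition supplied by Proposition \ref{FQ} then yields the result. A subtlety I expect to track carefully is that after a finger move the basing paths, and hence the specific meridian generators, must be transported in a compatible way so that the identification of the Milnor quotients with $\Gamma_x,\Gamma_y,\Gamma_z$ really matches on both sides of the move; but this is a local issue near the finger and should follow from the local description of the normal bundle used to define the meridians.
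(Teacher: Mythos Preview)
Your overall plan matches the paper's, and your treatment of finger moves on $f_y$ (the key case) is essentially identical to the paper's commutative-square argument. However, there is a genuine gap in your handling of Whitney moves.

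You assert that Whitney moves do not alter the complement $S^4 \setminus \nu(f_y \sqcup f_z)$. This is false when the Whitney move is performed on $f_y$ or $f_z$: a Whitney move on $f_y$ removes a pair of self-intersections of $f_y$, which changes the complement and in general changes $\pi_1$ by \emph{deleting} a relation of the form $[y,gyg^{-1}]$. So the ambient group ring in which $\lambda(f_x,f_x)$ lives really does change, and you cannot simply appeal to homotopy invariance of $\lambda$ in a fixed ambient manifold. One could repair this directly by observing that the new $\pi_1$ surjects onto the old one with the same Milnor quotient, giving a commutative square analogous to the finger-move case---but you have not done this, and your sentence ``none of these alters the complement'' is incorrect as stated.

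The paper sidesteps the issue entirely with a reordering trick you did not use: after discarding cusps (which affect neither $\lambda$ nor $\pi_1$ of the complement), one rearranges the regular homotopy so that all finger moves occur in $(0,\tfrac12)$ and all Whitney moves in $(\tfrac12,1)$. Invariance under finger moves gives $\sigma^3(f)=\sigma^3(F(-,\tfrac12))$. Then, running the homotopy \emph{backward} from $g$ to $F(-,\tfrac12)$ converts each Whitney move into a finger move, so the same finger-move argument gives $\sigma^3(g)=\sigma^3(F(-,\tfrac12))$, and transitivity finishes. This way one never has to analyse Whitney moves at all.
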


\subsection{Link maps (unbased)}
Using the invariant for the based link maps we will develop an invariant for unbased link maps. This will come from taking a quotient of $K$ which corresponds to changing the choice of basing paths for each sphere.

We first analyse the changes to $\sigma^3$ which occur when we change the basing of a single component. After this, we describe what happens when all the basings change. This will give us an orbit space for the action on $K$ that correspond to changing the basing paths. This will be the correct quotient to describe an invariant of unbased link maps. 

\begin{lem}\label{changebase}
Let $f$ and $g$ be based link maps which are equal except that the basings of the $x$ component are different. Then for some $h\in \Gamma_x$ we have 
\[
M\lambda(g_x, g_x)=h\left(M\lambda(f_x, f_x)\right)h^{-1}\text{.}
\]
\end{lem}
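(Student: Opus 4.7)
The plan is to track exactly how the group element $g_p$ associated to each (self-)intersection point changes when we replace the basing path $\gamma^f_x$ by $\gamma^g_x$, and recognize the change as conjugation by the loop formed by the two basing paths.

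Concretely, since $f$ and $g$ agree as maps on all three components and differ only in the basing of the $x$-component, both $\gamma^f_x$ and $\gamma^g_x$ are paths in $S^4$ from $s_0$ to $f_x(x_0) = g_x(x_0)$. After a small transverse perturbation (using general position, which does not affect anything in the Milnor quotient) we may assume both paths miss $\nu(f_y \sqcup f_z)$, so that
\[
h \;:=\; \gamma^g_x \cdot \overline{\gamma^f_x}
\]
defines an element of $\pi_1(S^4 \setminus \nu(f_y \sqcup f_z))$, whose image in the Milnor quotient lies in $\Gamma_x$.

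Next I would compute $\lambda(g_x, g_x)$ using the geometric formula via a push-off $g_x^+$ which equals the push-off $f_x^+$ used for $\lambda(f_x,f_x)$. For each intersection point $p \in g_x \cap g_x^+ = f_x \cap f_x^+$, with preimage paths $\delta_1, \delta_2$ chosen in $S^2$, the associated loop based at $s_0$ is
\[
g_p^{(g)} \;=\; \gamma^g_x \cdot f_x(\delta_1) \cdot f_x(\overline{\delta_2}) \cdot \overline{\gamma^g_x}
\;=\; \bigl(\gamma^g_x \cdot \overline{\gamma^f_x}\bigr)\cdot g_p^{(f)}\cdot \bigl(\gamma^f_x \cdot \overline{\gamma^g_x}\bigr)
\;=\; h \cdot g_p^{(f)} \cdot h^{-1}.
\]
Since the sign $\varepsilon_p$ at each intersection is a local invariant and so is unchanged, summing over $p$ yields $\lambda(g_x, g_x) = h \cdot \lambda(f_x, f_x) \cdot h^{-1}$ in $\mathbb{Z}\pi_1(S^4 \setminus \nu(f_y \sqcup f_z))$. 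Applying the Milnor quotient is a ring homomorphism that intertwines conjugation, so passing to $\mathbb{Z}\Gamma_x$ gives the claimed identity with $h$ now denoting the image in $\Gamma_x$.

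The only real obstacle is bookkeeping: one must be careful that the interior data used to compute $\lambda$ (the paths $\delta_1, \delta_2$ inside $S^2$, the push-off, the auxiliary meridional arc near $\gamma_x(1)$) can be chosen identically for $f$ and $g$, so that the \emph{only} difference between $g_p^{(g)}$ and $g_p^{(f)}$ is the outer basing arc. Once this is set up, the proof is a single line of cancellation $\overline{\gamma^f_x}\cdot \gamma^f_x \simeq \mathrm{const}$ inserted in the middle of the loop, followed by recognising the outer segments as $h$ and $h^{-1}$.
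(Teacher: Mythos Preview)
Your argument is correct and follows essentially the same approach as the paper: insert $\overline{\gamma^f_x}\cdot\gamma^f_x$ into the loop at each intersection point to exhibit conjugation by $h=\gamma^g_x\cdot\overline{\gamma^f_x}$. The paper carries this out via the self-intersection number $\mu$ and then invokes Theorem~\ref{intersection} to pass to $\lambda$, and it also threads in an auxiliary path $\alpha$ on the image of $f_x$ between the two basing endpoints (a generality that is really only needed for the subsequent Lemma~\ref{onebas}), but the core cancellation step is identical to yours.
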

\begin{proof}
Let $\gamma^f_x:I\rightarrow S^4$ and $\gamma^g_x:I\rightarrow S^4 $ be basing paths for the $x$ component. To prove the result we will work with the self-intersection number $\mu$ and use Theorem \ref{intersection} to calculate $\lambda$. Let $p\in f_x(S^2_x)$ be a self-intersection. Using the basing $\gamma_x^f$, the group element associated to the intersection is given by
\[
\gamma_x^f\cdot f(\delta_1)\cdot f(\overline{\delta}_2)\cdot \overline{\gamma}^f_x\text{.}
\]
Recall that $\delta_1$ and $\delta_2$ are paths on $S^2$ from $f^{-1}(\{\gamma_x^f(1)\})$ to the distinct elements $p_1, p_2\in f^{-1}_x(\{p\})$ respectively. Using the basing path $\gamma_x^g$, the group element associated to the self-intersection becomes 
\[
\gamma_x^g\cdot f(\delta_1^\prime)\cdot f(\overline{\delta}_2^\prime)\cdot \overline{\gamma}^g_x\text{,}
\]
where $\delta_1^\prime$ and $\delta_2^\prime$ are defined similarly to $\delta_1$, $\delta_2$. Let $\alpha:I\rightarrow \im(f_x)$ such that $\alpha(0)=\gamma_x^g(1)$ and $\alpha(1)=\gamma_x^f(1)$ such that $\alpha$ does not pass through an intersection at any point on the path. Also, define $h=\gamma_x^g\cdot \alpha\cdot\bar{\gamma}^f_x$. As $S^2$ is simply connected, $\alpha\cdot f(\delta_1)\simeq f(\delta_1^\prime)$ and $f(\bar{\delta}_2)\cdot\bar{\alpha}\simeq f(\bar{\delta}_2^\prime)$ . Using both of these homotopies we have
\begin{align*}
\gamma_x^g\cdot f(\delta_1^\prime)\cdot f(\bar{\delta}_2^\prime)\cdot \bar{\gamma}^g_x&\simeq\gamma_x^g\cdot\alpha\cdot f(\delta_1)\cdot f(\bar{\delta}_2)\cdot\bar{\alpha}\cdot\bar{\gamma}_x^g\\
&\simeq\gamma_x^g\cdot\alpha\cdot\bar{\gamma}_x^f\cdot \gamma_x^f\cdot f(\delta_1)\cdot f(\bar{\delta}_2)\cdot \bar{\gamma}_x^f\cdot \gamma_x^f\cdot\bar{\alpha}\cdot\bar{\gamma}_x^g\\
&=h\left(\gamma_x^f\cdot f(\delta_1)\cdot f(\bar{\delta}_2)\cdot \bar{\gamma}^f_x\right)\bar{h}\text{,}
\end{align*}
where $h \in\Gamma_x$. Using Theorem \ref{intersection} we have proven the result.
\end{proof}

\begin{figure}
\begin{tikzpicture}
\node [
    above right,
    inner sep=0] (image) at (0,0) {\includegraphics[scale=0.5]{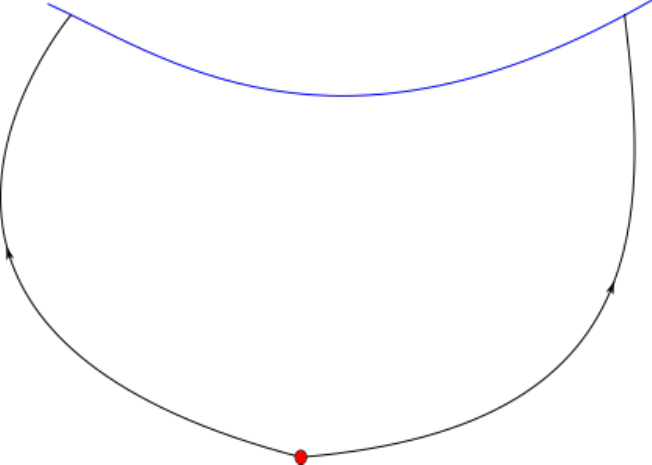}};
\node at (6,3){ $\gamma^f_x$};
\node at (-0.5,3){ $\gamma^g_x$};
\node at (2.7,3.45){$f_x$};
\end{tikzpicture}
     \caption{A schematic for the different choice of basing paths to the $x$ component, for the based link maps $f$ and $g$ in Lemma \ref{changebase}}
    \label{fig:Bpaths}
\end{figure}

\begin{lem}\label{onebas}
Let $f$ be a based link map such the triple $\sigma^3(f)$ is equal to 
\[
\Bigg(\sum_{\left(i,j,k\right)\in\mathbb{Z}^3}a_{ijk}y^is^jz^k,\sum_{\left(i,j,k\right)\in\mathbb{Z}^3}b_{ijk}z^it^jx^k,\sum_{\left(i,j,k\right)\in\mathbb{Z}^3}c_{ijk}x^iu^jy^k\Bigg)\in K
\]
Then for a based link map $g$, which is equal to $f$ except that it differs by a basing path to the $x$ component, the triple $\sigma^3(g)$ is equal to \[
\Bigg(\sum_{\left(i,j,k\right)\in\mathbb{Z}^3}a_{ijk}y^is^{j-ak+bi}z^k,\sum_{\left(i,j,k\right)\in\mathbb{Z}^3}b_{ijk}z^it^{j+bk}x^k,\sum_{\left(i,j,k\right)\in\mathbb{Z}^3}c_{ijk}x^iu^{j-ai}y^k\Bigg)\in K\text{,}
\]
for some $a,b\in\mathbb{Z}$.
\begin{proof}
By the previous lemma we have $M\left(\lambda\left(g_x, g_x\right)\right)=hM\left(\lambda\left(f_x, f_x\right)\right)h^{-1}$, for some $h\in\Gamma_x$. Since $h=y^{v_1}s^{q_x}z^{w_1}$ for $v_1,q_x, w_1\in \mathbb{Z}$ then using the relations in $\Gamma_x$ we have
\begin{align*}
M\left(\lambda\left(g_x, g_x\right)\right)&=y^{v_1}s^{q_x}z^{w_1}\left(\sum_{\left(i,j,k\right)\in\mathbb{Z}^3}a_{ijk}y^is^jz^k\right)z^{-w_1}s^{-q_x}y^{-v_1}\\
&=\sum_{\left(i,j,k\right)\in\mathbb{Z}^3}a_{ijk}y^is^{j-{w_1}i+v_1k}z^k\text{.}
\end{align*}

We now show how the other components transform. The details of this are slightly more subtle, as the group elements associated to the intersections do not change when considered as elements of $M\pi_1(S^4\setminus f_x\sqcup f_z)$ and $M\pi_1(S^4\setminus f_x\sqcup f_y)$. The reason why the values in $\Gamma_y$ and $\Gamma_z$ change is because our choice of isomorphisms from $M\pi_1(S^4\setminus f_x\sqcup f_z)$  and $M\pi_1(S^4\setminus f_x\sqcup f_y)$, to $\Gamma_y$ and $\Gamma_z$ respectively, change when we change the basing path. In order to compute what our invariant looks like on $g$ we must describe the meridian of the first component of $f$ in terms of the specified meridians of $g$. 

Let $x^\prime$ be the meridian of the first component of $g$. Our goal is to find a loop $r\in \pi_1(S^4\setminus f)$ such $x=r^{-1}x^\prime r$, where we consider $r$ as in $\Gamma_y$ and $\Gamma_z$ and $r=h$ in $\Gamma_x$.

Let  $\gamma^f_{\varepsilon},\gamma^g_\varepsilon:I\rightarrow S^4$ be the basing paths of the $x$ components $f$ and $g$ respectively which stop at  $\gamma^f_x(1-\varepsilon)$ and $\gamma^f_x(1-\varepsilon)$. Recall that $\alpha: I\rightarrow f_x$ is a path between the two basing points of $g$ and $f$ starting at $\gamma_x^g(1)$ and ending at $\gamma^f_x(1)$ without going through any intersections. Let $\alpha^\prime$ be a normal push-off of $\alpha$ from the surface with starting point $\gamma^g_x(1-\varepsilon)$ and with end point $\gamma^f_x(1-\varepsilon)$. Let $p^g_x$ be the generator, based at $\gamma^g_x(1-\varepsilon)$, of $\pi_1(U\times D^2\setminus U\times\{0\})$ with positive linking number to $f_x$, where $U$ is an open subset around $\gamma_x^g(x)$ which trivialises the normal bundle. We have a homotopy $x\simeq\gamma^f_{\varepsilon}\cdot\overline{\alpha}^\prime\cdot p^g_x\cdot\alpha \cdot \overline{\gamma}^f_{\varepsilon}$, which we can see by comparing both pictures in Figure \ref{fig:example}. Hence,
\begin{align*}
x&\simeq\gamma^f_{\varepsilon}\cdot\overline{\alpha}^\prime\cdot p^g_x\cdot\alpha \cdot \overline{\gamma}^f_{\varepsilon}\\
&\simeq\gamma^f_{\varepsilon}\cdot\overline{\alpha}^\prime\cdot\overline{\gamma}^g_{\varepsilon}\cdot\gamma^g_{\varepsilon}\cdot p^g_x\cdot\overline{\gamma}^g_\varepsilon\cdot \gamma^g_{\varepsilon}\cdot\alpha \cdot \overline{\gamma}^f_{\varepsilon}\\
&=r^{-1}x^\prime r\text{,}
\end{align*}
where $r=\gamma^g_{\varepsilon}\cdot\alpha \cdot \overline{\gamma}^f_{\varepsilon}$ and, by definition, $x^\prime=\gamma^g_{\varepsilon}\cdot p^g_x\cdot\overline{\gamma}^g_\varepsilon$. Considering $r$ as an element inside $M\pi_1(S^4\setminus f_y\sqcup f_z)$ we can see that it is homotopic to $h\in\Gamma_x$. Using this we have 
\begin{align*}
M\left(\lambda(g_y, g_y)\right)&=\sum_{(i,j,k)\in\mathbb{Z}^3}b_{ijk}z^it^jx^k\\
&=\sum_{(i,j,k)\in\mathbb{Z}^3}b_{ijk}z^it^j(r^{-1}x^\prime r)^k,
\end{align*}
where we consider $r\in \Gamma_y$. Similarly we have 
\[
M\left(\lambda(g_z, g_z)\right)=\sum_{(i,j,k)\in\mathbb{Z}^3}c_{ijk}(r^{-1}xr)^iu^jy^k, 
\]
where we consider $r\in \Gamma_z$. We consider $r$ inside the different quotients, $\Gamma_x$, $\Gamma_y$ and $\Gamma_z$. Inside these quotients we find that $r$ is equal to $y^{v_1} s^{q_x}z^{w_1}\in\Gamma_x$, $z^{v_2} t^{q_y}x^{w_2}\in\Gamma_y$ and $x^{v_3}u^{q_z}y^{w_3}$, where $v_i, w_i\in\mathbb{Z}$ for $1\leq i\leq 3$.
 \begin{cla}\label{link com}
In the above we have $v_1=w_3$, $w_1=v_2$ and $w_2=v_3$.
 \end{cla}
 From this claim we set $v_1=-a$ and $w_1=-b$. We now have that the triple for $g$, substituting in the various quotients of $r$, and using $\Gamma_i$ relations we have
 \begin{align*}
\bigg(&\sum_{\left(i,j, k\right)\in\mathbb{Z}^3}a_{ijk}y^{-a}z^{-b}y^is^jz^kz^by^a,\sum_{\left(i,j, k\right)\in\mathbb{Z}^3}b_{ijk}z^it^j(z^bx^\prime z^{-b})^k,\sum_{\left(i,j, k\right)\in\mathbb{Z}^3}c_{ijk}(y^ax^\prime y^{-a})^iu^jy^k\bigg)\\
&=\bigg(\sum_{\left(i,j, k\right)\in\mathbb{Z}^3}a_{ijk}y^is^{j-ak+bi}z^k,\sum_{\left(i,j, k\right)\in\mathbb{Z}^3}b_{ijk}z^it^{j+bk}(x^\prime)^k ,\sum_{\left(i,j, k\right)\in\mathbb{Z}^3}c_{ijk}(x^\prime)^iu^{j-ai}y^k\bigg)\text{.}
\end{align*}
We now replace $x^\prime$ with $x$ to identify the $x$ meridian given by $g$ to get
\[
\bigg(\sum_{\left(i,j, k\right)\in\mathbb{Z}^3}a_{ijk}y^is^{j-ak+bi}z^k,\sum_{\left(i,j, k\right)\in\mathbb{Z}^3}b_{ijk}z^it^{j+bk}x^k ,\sum_{\left(i,j, k\right)\in\mathbb{Z}^3}c_{ijk}x^iu^{j-ai}y^k\bigg)
\]
To complete the proof, we need to account for the different possible choice of $r$ resulting from the choice of push off of $\alpha$. The different choice of $\alpha$ does not effect $a$ and $b$ as these are linking numbers of the path with the $y$ and $z$ component and upon the inclusion of the $x$ component all the different paths become homotopic and thus these values are unchanged by our choice. The only value which is changed is the power of $x$ in $\Gamma_y$ and $\Gamma_z$ but this does not affect the end results as the triple is independent of $p^\prime$. This completes the proof modulo the proof of Claim \ref{link com}. 
\end{proof}
\end{lem}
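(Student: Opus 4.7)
The plan is to show that each of the six exponents $v_1, w_1, v_2, w_2, v_3, w_3$ is in fact a linking number of $r$ with one of the three spheres $f_x$, $f_y$, $f_z$. Since only three linking numbers are available, the pigeonhole will immediately force the three equalities of the claim.

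First I would abelianise $\Gamma_x$. The defining relations $[y,s]$, $[z,s]$, $[y,z]s^{-1}$ force $s$ to die in $\Gamma_x^{\mathrm{ab}}$, so $\Gamma_x^{\mathrm{ab}} \cong \mathbb{Z}\langle y\rangle \oplus \mathbb{Z}\langle z\rangle$. Topologically, under the identification $\Gamma_x \cong M\pi_1(S^4\setminus(\nu(f_y)\cup\nu(f_z)))$, the Hurewicz map composed with abelianisation sends this Milnor quotient to $H_1(S^4\setminus(\nu(f_y)\cup\nu(f_z)))\cong\mathbb{Z}^2$, which by Alexander duality is freely generated by the meridians $y$ and $z$; the coefficients of a loop in this basis are precisely its linking numbers with $f_y$ and $f_z$. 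Applied to $r = y^{v_1}s^{q_x}z^{w_1}$ this yields $v_1 = \lk(r,f_y)$ and $w_1 = \lk(r,f_z)$. I would then repeat the same calculation verbatim in $\Gamma_y$ (where $t$ dies) to get $v_2 = \lk(r,f_z)$ and $w_2 = \lk(r,f_x)$, and in $\Gamma_z$ (where $u$ dies) to get $v_3 = \lk(r,f_x)$ and $w_3 = \lk(r,f_y)$. Comparing the pairs of identities yields
\[
v_1 = w_3 = \lk(r,f_y), \quad w_1 = v_2 = \lk(r,f_z), \quad w_2 = v_3 = \lk(r,f_x),
\]
which is exactly the claim.

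The only point that needs care is bookkeeping: I must be sure that the symbol $y$ (and similarly $x$, $z$) labels the same geometric meridian in each of the two presentations in which it appears. This is built into the construction of Section \ref{chap5}, since the meridians of $f_x, f_y, f_z$ are defined once and for all from the common basing paths of $f$ before the three quotients $\Gamma_x, \Gamma_y, \Gamma_z$ are formed, so the identification of the generators across the three presentations is canonical. Granted this, the argument reduces to the observation that the linking number of $r$ with $f_i$ is intrinsic to $r$ as a loop in $S^4 \setminus f$ and is independent of which of the three ambient complements one chooses to compute it in.
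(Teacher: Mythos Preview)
Your argument is correct and is essentially the same as the paper's: both proofs abelianise each $\Gamma_i$ (killing the central commutator generator), identify the abelianisation with $H_1$ of the appropriate two-sphere complement, and read off the exponents as linking numbers of $r$ with the remaining spheres. The paper packages this as a commutative square relating $M\pi_1(S^4\setminus f)\to\mathbb{Z}^3$ with the product of the three abelianisations $\Gamma_x\times\Gamma_y\times\Gamma_z\to\mathbb{Z}^2\times\mathbb{Z}^2\times\mathbb{Z}^2$, whereas you name the linking numbers directly; the content is identical, and indeed the paper's remark following the lemma restates exactly your interpretation of $v_1$ and $w_1$ as linking numbers.
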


\begin{rem}
It is clear from the proof of Lemma \ref{onebas} that we only need to consider the loop $r$ as an element of $\Gamma_x$ to know how to change the triple for the new basing path and, since elements in the center do not matter, we only need to look at the curve as an element of $\Gamma_x/\left[\gamma_x, \gamma_x\right]\cong H_1(S^4\setminus f_y\sqcup f_z)$. As $v_1$ is the linking of the loop with the $S_y^2$ sphere and $w_1$ is the linking number of the loop $S^2_z$, it is clear we only need to know the linking number of the loop with the other components to calculate how the triple transforms.
\end{rem}

\begin{figure}%
    \centering
    \subfloat{{\begin{tikzpicture}
\node [
    above right,
    inner sep=0] (image) at (0,0) {\includegraphics[scale=0.5]{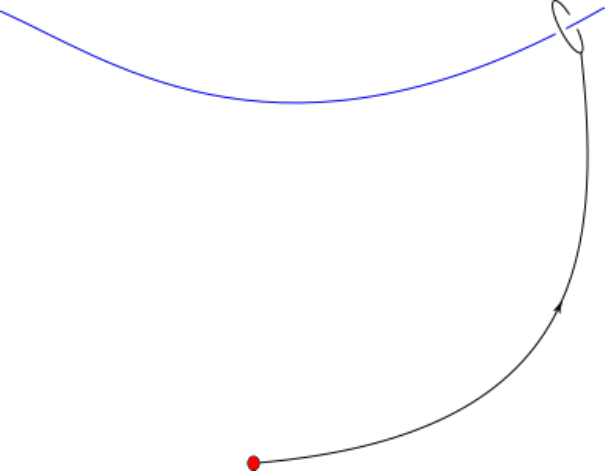}};
\node at (4.6,2){$x$};
\node at (2.7,3.45){$f_x$};
\end{tikzpicture}}}%
    \qquad\quad\quad\quad
    \subfloat{{\begin{tikzpicture}
\node [
    above right,
    inner sep=0] (image) at (0,0) {\includegraphics[scale=0.5]{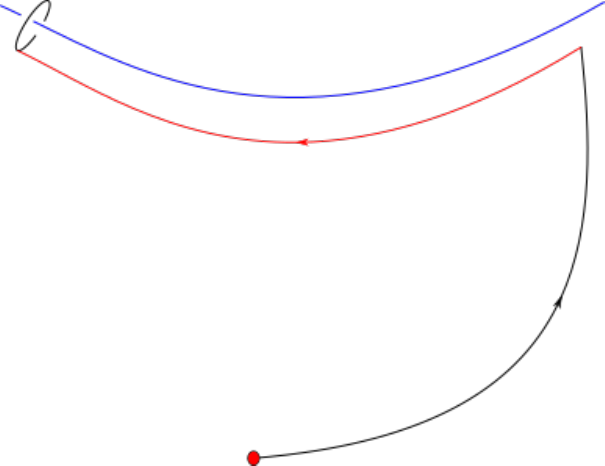}};
\node at (5.3,3){ $\gamma^f_{\epsilon}$};
\node at (2.7,3.45){$f_x$};
\node at (0.9, 4){$p^g_{x}$}; 
\node at (2.7,2.5){$\alpha$};
\end{tikzpicture} }}%
    \caption{On the left is a schematic showing the meridian $x$ associated to the surfaces basing path. On the right is a  schematic showing the result of a homotopy of the $x$ meridian, which uses the push-off of the surface $\alpha$.}%
    \label{fig:example}%
\end{figure}

We now prove the claim that was in the previous proof.
 \begin{proof}[Proof of Claim \ref{link com}]
We will prove that the following diagram commutes
\[
\begin{tikzcd}
M\pi_1(S^4\setminus f)\arrow[d,"q"]\arrow[rr, "\Ab"]&&\mathbb{Z}^3\arrow[d]\\
\Gamma_x\times\Gamma_y\times\Gamma_z\arrow[rr,"\Ab\times \Ab\times \Ab"]&&\mathbb{Z}^2\times\mathbb{Z}^2\times\mathbb{Z}^2\text{,}
\end{tikzcd}
\]
where $\Ab$ is the quotient map to the abelianisation for the various groups involved. The map $q$ takes a group and sends it to its various representatives in each factor. Notice that $\left(\Ab\times \Ab\times \Ab\right)(\Gamma_x\times \Gamma_y\times \Gamma_z)=H_1(S^4\setminus f_y\sqcup f_z)\times H_1(S^4\setminus f_z\sqcup f_x)\times H_1(S^4\setminus f_x\sqcup f_y)\cong\mathbb{Z}^2\times\mathbb{Z}^2\times\mathbb{Z}^2$ and $H_1(S^4\setminus f)\cong \mathbb{Z}^3$. 
The map $\mathbb{Z}^3\rightarrow \mathbb{Z}^2\times\mathbb{Z}^2\times\mathbb{Z}^2$ is given by
\[
x^\alpha y^\beta z^\gamma\mapsto \left((\beta,\gamma), (\alpha, \gamma), (\alpha, \beta)\right)
\]
Proving the above diagram commutes is equivalent to showing that
\[
\begin{tikzcd}
M\pi_1(S^4\setminus f)\arrow["\Ab"]{d}\arrow["i"]{r}&\Gamma_x\arrow["\Ab"]{d}\\
H_1(S^4\setminus f)\arrow["H(i)"]{r}&H_1(S^4\setminus f_y\sqcup f_z)
\end{tikzcd}
\]
commutes, as the roles of $x$, $y$ and $z$ are symmetric. We can write $r=x^\alpha y^\beta z^\gamma\eta$ where $\eta$ is a product of commutators. Then $\Ab(i(r))=\Ab(y^\beta z^\gamma s^p)=y^\beta z^\gamma$. Computing the other path, we have $H(i)\left(\Ab(r)\right)=H(i)(x^\alpha y^\beta z^\gamma)=y^\beta z^\gamma$. This proves the claim.
 \end{proof}

We now show how the triple changes when all three components change their basing paths 
\begin{cor}
Let $g_1$ be a based link map with triple $\sigma^3(g_1)$ equal to
\[
\Bigg(\sum_{\left(i,j,k\right)\in\mathbb{Z}^3}a_{ijk}y^is^jz^k,\sum_{\left(i,j,k\right)\in\mathbb{Z}^3}b_{ijk}z^it^jx^k,\sum_{\left(i,j,k\right)\in\mathbb{Z}^3}c_{ijk}x^is^jy^k\Bigg)\in K\text{.}
\]
Let $g_2$ be the same link map as $g_1$ with  potentially different basings then
\begin{multline*}
\sigma^3(g_2)=\Bigg(\sum_{\left(i,j,k\right)\in\mathbb{Z}^3}a_{ijk}y^is^{j+(b-c)i+(f-a)k}z^k,\sum_{\left(i,j,k\right)\in\mathbb{Z}^3}b_{ijk}z^it^{j+(d-e)i+(b-c)k}x^k,\\\sum_{\left(i,j,k\right)\in\mathbb{Z}^3}c_{ijk}x^is^{j+(f-a)i+(d-e)k}y^k\Bigg)\in K\text{,}
\end{multline*}
for some $a,b,c,d,e,f\in\mathbb{Z}$.

\begin{proof}
Let $r_x=y^az^bs^\alpha\in\Gamma_x$, $r_y=z^cx^dt^\beta\in\Gamma_y$ and $r_z=x^ey^du^\gamma\in\Gamma_z$ be the loops we construct from the two basings on each component. We now repeat the proof of Lemma \ref{onebas} on each sphere: changing the $x$-basing we have
\[
\bigg(\sum_{\left(i,j, k\right)\in\mathbb{Z}^3}a_{ijk}y^is^{j-ak+bi}z^k,\sum_{\left(i,j, k\right)\in\mathbb{Z}^3}b_{ijk}z^it^{j+bk}x^k ,\sum_{\left(i,j, k\right)\in\mathbb{Z}^3}c_{ijk}x^iu^{j-ai}y^k\bigg);
\]
and changing the $y$-basing
\[
\bigg(\sum_{\left(i,j, k\right)\in\mathbb{Z}^3}a_{ijk}y^is^{j-ak+(b-c)i}z^k,\sum_{\left(i,j, k\right)\in\mathbb{Z}^3}b_{ijk}z^it^{j+(b-c)k+di}x^k ,\sum_{\left(i,j, k\right)\in\mathbb{Z}^3}c_{ijk}x^iu^{j-ai+dk}y^k\bigg).
\]
Changing the $z$-basing we arrive at the desired result,
\[
\bigg(\sum_{\left(i,j, k\right)\in\mathbb{Z}^3}a_{ijk}y^is^{j+(b-c)i+(f-a)k}z^k,\sum_{\left(i,j, k\right)\in\mathbb{Z}^3}b_{ijk}z^it^{j+(b-c)k+(d-e)i}x^k,\sum_{\left(i,j, k\right)\in\mathbb{Z}^3}c_{ijk}x^iu^{j+(f-a)i+(d-e)k}y^k\bigg)\text{.}
\]
One can check that the above does not depend on the order in which we changed the basing paths. However, it is clear geometrically that this is the case.
\end{proof}
\end{cor}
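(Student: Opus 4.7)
The plan is to iterate Lemma \ref{onebas} three times, once per component whose basing path may differ between $g_1$ and $g_2$. The roles of $x$, $y$, $z$ in the definition of $\sigma^3$ and in the construction of the three quotients $\Gamma_x$, $\Gamma_y$, $\Gamma_z$ are entirely symmetric, so the statement and proof of Lemma \ref{onebas} carry over verbatim to the cases where only the $y$-basing or only the $z$-basing is changed, each producing its own pair of integer parameters coming from the linking numbers of the new basing path with the other two components.

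The strategy is to factor an arbitrary change of all three basing paths as a composition of three single-component changes. First I change the $x$-basing, producing parameters $a,b\in\mathbb{Z}$ (with $-a$ the linking number of the connecting loop $r_x\in\Gamma_x$ with the $y$-component and $-b$ its linking number with the $z$-component, as in the remark after Lemma \ref{onebas}). Applying Lemma \ref{onebas} directly gives the intermediate triple
\[
\bigg(\sum_{(i,j,k)} a_{ijk}y^is^{j-ak+bi}z^k,\ \sum_{(i,j,k)} b_{ijk}z^it^{j+bk}x^k,\ \sum_{(i,j,k)} c_{ijk}x^iu^{j-ai}y^k\bigg).
\]
Next I change the $y$-basing, producing parameters $c,d$. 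Applying the symmetric version of Lemma \ref{onebas} to the intermediate triple (treating its exponents as new coefficients $a_{ijk}$, $b_{ijk}$, $c_{ijk}$) shifts the $s$-exponent in the first slot by $-ci$, the $t$-exponent in the second slot by $-ck+di$, and the $u$-exponent in the third slot by $+dk$. Finally I change the $z$-basing, producing parameters $e,f$, which shifts the $s$-exponent in the first slot by $+fk$, the $t$-exponent in the second slot by $-ei$, and the $u$-exponent in the third slot by $+fi-ek$. Collecting terms yields exactly the expression in the statement.

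The only substantive point is that the resulting triple should not depend on the order in which the three basings are changed. This can be checked algebraically: each change adds to a single exponent a $\mathbb{Z}$-linear expression in two of the variables $i,j,k$ whose coefficients are linking numbers of the corresponding basing path with only the other two spheres, and these three sets of linking numbers are independent of one another. Equivalently, one can argue geometrically that the composite change is determined by the pair of triples of basing paths (for $g_1$ and $g_2$), so the final triple is independent of any intermediate choice. The main obstacle is merely careful bookkeeping of signs and exponents; no new geometric idea beyond Lemma \ref{onebas} is needed.
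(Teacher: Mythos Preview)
Your proposal is correct and follows essentially the same approach as the paper: iterate Lemma~\ref{onebas} three times (changing the $x$-, then $y$-, then $z$-basing), obtaining the same intermediate triples, and then remark that the outcome is independent of the order, which can be verified algebraically or seen geometrically.
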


We wish to show that we can achieve all values of $a,b,c,d,e,f\in\mathbb{Z}$ by changing basing paths. Let $g$ be a link map and define $B_g$ to be the set consisting of triples of basing paths to the $x$, $y$ and $z$ spheres. We define a map $\zeta:B_g\times B_g\rightarrow \mathbb{Z}^6$. Let $Q,Q^\prime\in B_g$. Let $\gamma_x$, $\gamma_y$ and $\gamma_z$ be the basing paths for each sphere given by $Q$ and let $\gamma^\prime_x$, $\gamma^\prime_y$ and $\gamma^\prime_z$ be the basing paths in $Q^\prime$. Define a loop
\[
h_x:=\gamma_x\cdot\delta\cdot\bar{\gamma}_x^\prime,
\]
where $\delta$ is the unique path up to homotopy between the points $\gamma_x(1)$ and $\gamma^\prime_x(1)$ which is the image of a path in the pre-images. We define $h_y$ and $h_z$ similarly. Thus we define 
\[
\zeta\left(Q, Q^\prime\right):=\big(\lk\left(h_x, g(S^2_y)\right), \lk\left(h_x, g(S^2_z)\right),\lk\left(h_y, g(S^2_z)\right), \lk\left(h_y, g(S^2_x)\right),\lk\left(h_z, g(S^2_x)\right),\lk\left(h_z, g(S^2_y)\right)\big)\text{.}
\]

\begin{lem}
The map, $\zeta:B_g\times B_g\rightarrow \mathbb{Z}^6$, is surjective.
\begin{proof}
Let $m_x$, $m_y$ and $m_z$ be meridians defined using the basing paths of $Q$ and let $a,b,c,d,e,f\in\mathbb{Z}$. Let $Q_m$ be the basing paths consisting of the paths $m_y^a\cdot m_z^b\cdot\gamma_x$, $m_z^c\cdot m_x^d\cdot\gamma_y$ and $m_x^e\cdot m_y^f\cdot \gamma_z$. Hence, $\zeta\left(Q_m, Q\right)=\left(a,b,c,d,e,f\right)$\text{.}

\end{proof}
\end{lem}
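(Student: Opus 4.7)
The plan is to explicitly construct a preimage $Q_m$ for each target tuple $(a,b,c,d,e,f) \in \mathbb{Z}^6$, following the recipe hinted at in the statement. Fix a reference basing $Q = (\gamma_x, \gamma_y, \gamma_z) \in B_g$ and let $m_x$, $m_y$, $m_z$ be the meridians constructed from $Q$ as in the preceding subsection. Given integers $a,b,c,d,e,f$, define $Q_m \in B_g$ by the three basing paths
\[
\gamma_x^\prime := m_y^a \cdot m_z^b \cdot \gamma_x, \quad \gamma_y^\prime := m_z^c \cdot m_x^d \cdot \gamma_y, \quad \gamma_z^\prime := m_x^e \cdot m_y^f \cdot \gamma_z,
\]
where each concatenation starts at the basepoint $s_0$. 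The task is then to show $\zeta(Q_m, Q) = (a,b,c,d,e,f)$.

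The first step I would carry out is to identify the homotopy class of the loop $h_x$ in $S^4 \setminus g$ associated to the pair $(Q_m, Q)$. Since $\gamma_x^\prime$ and $\gamma_x$ terminate at the same point of $g(S_x^2)$, the connecting arc $\delta$ on the $x$-sphere that appears in the definition of $h_x$ can be chosen to be constant. Hence $h_x \simeq \gamma_x \cdot \bar{\gamma}_x^\prime = \gamma_x \cdot \bar{\gamma}_x \cdot m_z^{-b} \cdot m_y^{-a}$, which is freely homotopic to $m_z^{-b} \cdot m_y^{-a}$ in $S^4 \setminus g$. I would then invoke additivity of linking numbers under concatenation together with the defining property $\lk(m_i, g(S_j^2)) = \delta_{ij}$ of the meridians to read off $\lk(h_x, g(S_y^2)) = -(-a) = a$ and $\lk(h_x, g(S_z^2)) = -(-b) = b$ (up to a uniform sign convention absorbed into the definition). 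Rerunning the same computation for $h_y$ and $h_z$ yields the remaining four coordinates.

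The main obstacle, such as it is, is bookkeeping of signs and orientations: whether one prepends $m_y^a \cdot m_z^b$ or its inverse in the definition of $\gamma_x^\prime$ is a convention, and any sign discrepancy between $\zeta(Q_m, Q)$ and the intended tuple can be absorbed by replacing $(a,b,c,d,e,f)$ with $(-a,-b,-c,-d,-e,-f)$ in the recipe. Conceptually the statement reduces to the fact that $\{m_y, m_z\}$ generate $H_1\bigl(S^4 \setminus (g(S_y^2) \cup g(S_z^2))\bigr) \cong \mathbb{Z}^2$ (shown in the preceding section), so prepending arbitrary meridian powers to a fixed basing path realises every possible pair of linking numbers with the two sphere components not containing the basing's endpoint. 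Summing over the three components gives the full surjection onto $\mathbb{Z}^6$.
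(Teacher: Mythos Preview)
Your proposal is correct and follows exactly the paper's approach: fix a reference basing $Q$, prepend prescribed meridian powers to each of its three paths to manufacture $Q_m$, and read off the linking numbers. The only wrinkle is that you have reversed the roles of the two arguments of $\zeta$ when writing $h_x\simeq \gamma_x\cdot\bar\gamma_x'$ (in the paper's convention the first argument contributes the leading path), which is why you pick up the spurious minus signs you then have to talk yourself out of; with the correct ordering one gets $h_x\simeq m_y^a m_z^b$ directly and no sign gymnastics are needed, but as you observe this is immaterial for surjectivity.
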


\begin{defn}\label{action}
Let $\left(a,b,c,d,e,f\right)\in\mathbb{Z}^6$ and
\[
\bigg(\sum_{\left(i,j, k\right)\in\mathbb{Z}^3}a_{ijk}y^is^jz^k,\sum_{\left(i,j, k\right)\in\mathbb{Z}^3}b_{ijk}z^it^jx^k,\sum_{\left(i,j, k\right)\in\mathbb{Z}^3}c_{ijk}x^iu^jy^k\bigg)\in K\text{.}
\]
Then we define
\begin{align*}
&\left(a,b,c,d,e,f\right)\cdot\bigg(\sum_{\left(i,j, k\right)\in\mathbb{Z}^3}a_{ijk}y^is^jz^k,\sum_{\left(i,j, k\right)\in\mathbb{Z}^3}b_{ijk}z^it^jx^k,\sum_{\left(i,j, k\right)\in\mathbb{Z}^3}c_{ijk}x^iu^jy^k\bigg)\\
&=\bigg(\sum_{\left(i,j,k\right)\in\mathbb{Z}^3}a_{ijk}y^is^{j-ci+fk-ak+bi}z^k,\sum_{\left(i,j, k\right)\in\mathbb{Z}^3}b_{ijk}z^it^{j-ei+bk-ck+di}x^k,\sum_{\left(i,j, k\right)\in\mathbb{Z}^3}c_{ijk}x^iu^{j-ai+ck-ek+fi}y^k\bigg)\text{.}
\end{align*}
\end{defn}

\begin{lem}\label{act}
The operation described in Definition \ref{action} is a group action on $K$ by $\mathbb{Z}^6$.
\begin{proof}
 One must check that for all $g\in K$ $(0,0,0,0,0,0)\cdot g=g$ and 
\[
(a,b,c,d,e,f)\cdot\left((a^\prime,b^\prime,c^\prime,d^\prime,e^\prime, f^\prime)\cdot g\right)=(a+a^\prime,b+b^\prime,c+c^\prime,d+d^\prime,e+e^\prime,f+f^\prime)\cdot g\text{.}
\]
This is clear from the algebra.
\end{proof}
\end{lem}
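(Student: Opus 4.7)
The plan is to verify directly the two axioms of a group action from Definition~\ref{action}. The identity axiom is immediate: when $(a,b,c,d,e,f)=(0,0,0,0,0,0)$, every exponent adjustment in each of the three coordinates vanishes, so the triple is fixed. The substantive content is the compatibility axiom, which boils down to checking that in each of the three coordinates the exponent shift is a linear function of $(a,b,c,d,e,f)$.

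By the cyclic symmetry between the three coordinates it suffices to focus on the first factor and then note that the other two follow by identical bookkeeping. Fix a summand $a_{ijk}y^is^jz^k$. Applying $(a',b',c',d',e',f')$ sends its $s$-exponent from $j$ to $j+(b'-c')i+(f'-a')k$ while leaving $i,k$ alone. Applying $(a,b,c,d,e,f)$ next shifts the exponent further by $(b-c)i+(f-a)k$, yielding the total shift
\[
\bigl((b+b')-(c+c')\bigr)i+\bigl((f+f')-(a+a')\bigr)k,
\]
which coincides exactly with the shift prescribed by the single element $(a+a',b+b',c+c',d+d',e+e',f+f')$. The same bookkeeping goes through for the $t$-exponent in the second coordinate (the indices relevant to this coordinate are $d-e$ and $b-c$) and for the $u$-exponent in the third (with the relevant indices $f-a$ and $d-e$).

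I expect no genuine obstacle here: the shifts in each coordinate are $\mathbb{Z}$-linear in the parameters $(a,b,c,d,e,f)$, so compatibility is forced. The only place to be careful is keeping track of which pair of parameters governs which coordinate, but this was arranged in Definition~\ref{action} precisely to mirror the effect of changing the three basing paths, as computed in the preceding corollary. Therefore both axioms hold and the assignment defines a $\mathbb{Z}^6$-action on $K$.
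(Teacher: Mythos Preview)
Your proof is correct and follows the same approach as the paper: the paper's own argument simply states that the identity and compatibility axioms are ``clear from the algebra,'' and you have carried out exactly that algebra explicitly, verifying that the exponent shifts are linear in the parameters $(a,b,c,d,e,f)$.
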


Let $\widetilde{K}$ be the orbit space of this action. 

\begin{defn}
Let $f$ be a three-component link map. Then we define a map $\widetilde{\sigma}^3:\LM_{2,2,2}^4\rightarrow\widetilde{K}$ by
\[
\widetilde{\sigma}^3\left(f\right):=[\sigma^3(f)]\in \widetilde{K}\text{.}
\]
\end{defn}
\begin{prop}
The map $\widetilde{\sigma}^3$ is a well defined map on $\LM_{2,2,2}^4$.
\begin{proof}
We showed that in the based case under link homotopy the triple remains fixed. It is clear from Lemma \ref{onebas} and the definition of $\widetilde{K}$ that the map is well defined.
\end{proof}
\end{prop}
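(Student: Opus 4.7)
The plan is to verify two independence statements for $\widetilde{\sigma}^3$: first, that the class $[\sigma^3(f)]\in\widetilde{K}$ does not depend on the choice of basings used to compute $\sigma^3(f)$; second, that this class is invariant under link homotopy of the underlying unbased link map. Together these imply that $\widetilde{\sigma}^3:\LM_{2,2,2}^4\to\widetilde{K}$ is well-defined.

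For the first statement I would fix an unbased link map $f$ and two choices of basings $Q,Q'\in B_f$, producing based link maps $f_Q$ and $f_{Q'}$. The Corollary to Lemma \ref{onebas} describes precisely how the triple $\sigma^3$ transforms when all three basings are changed, and the resulting formula matches the $\mathbb{Z}^6$-action on $K$ specified in Definition \ref{action}. Surjectivity of $\zeta$ guarantees that every tuple $(a,b,c,d,e,f)\in\mathbb{Z}^6$ arises from some change of basing. Since Lemma \ref{act} confirms that the formula really defines a group action and $\widetilde{K}$ is by construction its orbit space, the classes $[\sigma^3(f_Q)]$ and $[\sigma^3(f_{Q'})]$ coincide in $\widetilde{K}$.

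For the second statement, suppose $H:(\coprod_{i=1}^{3}S^2)\times I\to S^4$ is a link homotopy from $f$ to $f'$. Starting with any basings $\gamma_x,\gamma_y,\gamma_z$ for $f$, the plan is to concatenate each with the path $t\mapsto H_t(x_0^i)$ traced by the image of the $i$th sphere's basepoint, producing basings for $f'$ that make $H$ itself a based link homotopy. Proposition \ref{sigma} then yields $\sigma^3(\widetilde{f})=\sigma^3(\widetilde{f}')$ in $K$, so the two based invariants already agree in $K$, and a fortiori in $\widetilde{K}$. Combined with basing independence (which handles any other choice of basings for $f'$), this establishes invariance under link homotopy.

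The main (mild) obstacle I anticipate is purely bookkeeping: one must check that the exponent shift $j\mapsto j+(b-c)i+(f-a)k$ appearing in the Corollary to Lemma \ref{onebas}, together with its cyclic counterparts on the other two components, agrees term-by-term with the $\mathbb{Z}^6$-action written in Definition \ref{action}. Since both expressions have already been spelled out explicitly in the section, this reduces to a direct comparison of coefficients, and the geometric justification (that the relevant integers are the linking numbers of the basing-change loops with the other two spheres) is already contained in the remark following Lemma \ref{onebas}.
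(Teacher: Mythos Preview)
Your proposal is correct and follows essentially the same two-step strategy as the paper: basing independence via the Corollary to Lemma~\ref{onebas} and the definition of $\widetilde{K}$ as the orbit space of the $\mathbb{Z}^6$-action, and link-homotopy invariance via Proposition~\ref{sigma} after transporting basings along the homotopy. One small remark: the surjectivity of $\zeta$ is not needed for well-definedness (it only shows the quotient is the finest possible), so you could drop that sentence without loss.
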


The invariant $\widetilde{\sigma}^3$ contains all the data of the Kirk invariant on its two-component sublink maps.
\begin{lem}\label{containment}
Let $i:\LM_{2,2,2}^4\rightarrow \LM_{2,2}^4$ be the map which forgets about the $i$th sphere. Additionally, let $p_i:K\rightarrow \left(\mathbb{Z}\left[\mathbb{Z}\right]\right)^2$ be the map given by projecting onto the factors which are not $i$ and setting the $i$th meridian equal to $1$. Then the following diagram commutes
\begin{equation}
\begin{tikzcd}
\LM_{2,2,2}^4\arrow[d, "\widetilde{\sigma}^3"]\arrow[r, "i"]& \LM_{2,2}^4\arrow[d, "\sigma"]\\
K\arrow[r, "p_i"]& \left(\mathbb{Z}\left[\mathbb{Z}\right]\right)^2,
\end{tikzcd}
\end{equation}

where $i$ is the map that forgets the $i$th component, and $p_i$ is the map where you project onto two factors which are not $i$ and sets $i$th component's meridian equal to $1$.
\end{lem}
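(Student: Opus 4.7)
The plan is to do a direct diagram chase, reducing to the case $i=x$ by symmetry in the roles of the three components. Pick a representative generic based link map $f = f_x \sqcup f_y \sqcup f_z$ with meridian generators $x,y,z$, and compute both compositions of the square and verify they agree.

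Going around the right-and-down path, forgetting the $x$-component yields the two-component link map $f_y\sqcup f_z$, whose Kirk invariant records $\lambda(f_y,f_y) \in \mathbb{Z}[H_1(S^4\setminus \nu(f_z))] \cong \mathbb{Z}[\mathbb{Z}]$ (generated by the image of the meridian $z$) in its $y$-slot, and symmetrically in its $z$-slot. Going around the down-and-right path, $\widetilde{\sigma}^3(f)$ has $y$-component equal to $M\lambda(f_y, f_y) = \sum b_{ijk} z^i t^j x^k \in \mathbb{Z}\Gamma_y$. Applying $p_x$ drops the $x$-factor and sets $x=1$ in $\Gamma_y$ and $\Gamma_z$. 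Inside $\Gamma_y = \langle z,x,t \mid [z,t], [x,t], [z,x]t^{-1}\rangle$, imposing $x=1$ forces $t = [z,x] = 1$, so the quotient is the infinite cyclic group $\langle z\rangle$ and the $y$-component becomes $\sum_i\big(\sum_{j,k} b_{ijk}\big)z^i \in \mathbb{Z}[\mathbb{Z}]$.

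The two computations agree by a naturality argument: each self-intersection point $p$ of $f_y$ defines a loop $g_p$, and both paths record the same geometric intersections, only evaluated in different groups linked by a canonical quotient. Specifically, the inclusion $S^4 \setminus \nu(f_x \sqcup f_z) \hookrightarrow S^4 \setminus \nu(f_z)$ induces a map on Milnor quotients that kills the meridian $x$ and, after abelianization, agrees with the composite $\Gamma_y \twoheadrightarrow \mathbb{Z}[\mathbb{Z}]$ sending $z^i t^j x^k \mapsto z^i$. Thus the geometric element assigned to $p$ in the Kirk computation is precisely the image of its Milnor counterpart under $p_x$. The $z$-slot is handled identically using $\Gamma_z$.

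The one subtlety, and where the main obstacle lies, is handling the indeterminacy built into $\widetilde{\sigma}^3$: we must verify that the $\mathbb{Z}^6$-action from changing basing paths (Definition \ref{action}) descends trivially under $p_x$, so that $p_x$ is well defined on $\widetilde{K}$ and the diagram makes sense. Inspecting the action on the $y$-factor, the only change is to replace $t^j$ by $t^{j + (d-e)i + (b-c)k}$, and this exponent becomes irrelevant once $t = 1$ is imposed; the analogous check on the $z$-factor is identical. Hence $p_x$ factors through the orbit space $\widetilde{K}$, and the commutativity of the square follows from the paragraph above.
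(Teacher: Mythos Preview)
The paper states this lemma without proof, so there is no argument to compare against. Your proof is correct and supplies precisely the verification the paper omits: the same geometric self-intersection loops of $f_y$ are recorded along both routes, and the inclusion $S^4 \setminus \nu(f_x \sqcup f_z) \hookrightarrow S^4 \setminus \nu(f_z)$ sends the meridian $x$ to a null-homotopic loop (its meridional disc lies in the larger complement once $f_x$ is forgotten), inducing exactly the quotient $\Gamma_y \to \langle z\rangle$ given by $z^i t^j x^k \mapsto z^i$. Your final check that the $\mathbb{Z}^6$-action of Definition~\ref{action} only shifts exponents of $t$ and $u$, which vanish under $p_x$, is both necessary and correct; note that the diagram in the lemma has $K$ in the lower-left corner, but as the version in the introduction makes clear (and as your argument confirms), $p_i$ must really be regarded as a map out of $\widetilde{K}$ for the square to make sense.
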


\subsection{Examples of three-component linking behaviour}

 Our three-component invariant, $\tilde{\sigma}^3$, can differentiate between the trivial link map and link maps which are trivial when any component is removed, and thus can detect linking behaviour which only occurs when there are at least three-components.

\begin{theo}\label{R1}
Let $L$ be the link given in the centre of Figure \ref{fig:Pic1}. Then we can apply a null homotopy of components of the link to construct a representative of $f\in \LM_{2,2,2}^4$ with some choice of basing path such that
\[
\tilde{\sigma}^3(f)=\left(z\left(s-1\right)+z^{-1}\left(s^{-1}-1\right),\, 0,\, x\left(1-u\right)+x^{-1}\left(1-u^{-1}\right)\right)\neq 0.\]
Hence, $F$ is link homotopically non-trivial. However, the removal of any component gives a trivial component two-component link map.
\begin{figure}
    \centering
    \includegraphics[scale=1]{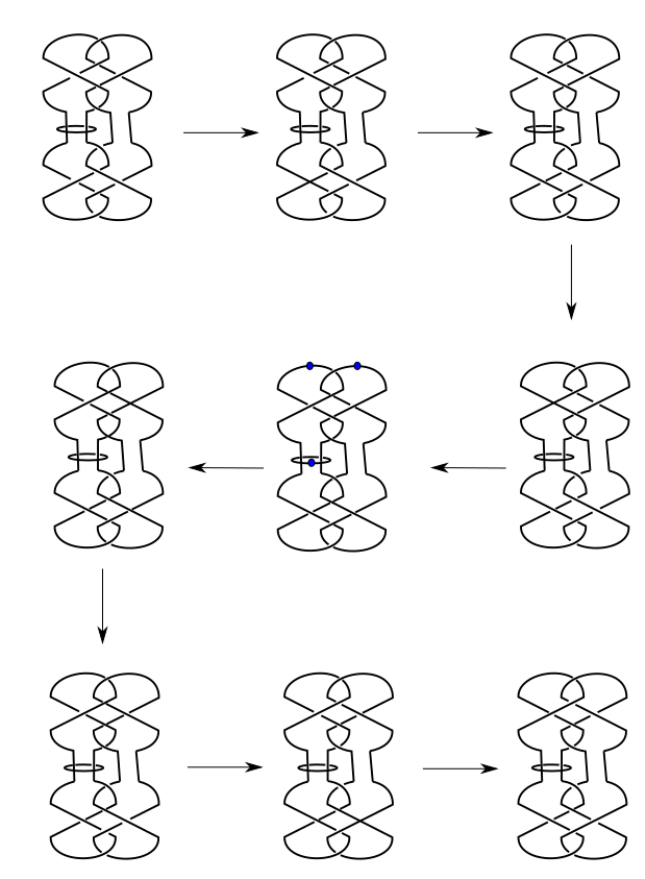}
    \caption[Example of non-trivial three-component link map]{This is a sequence of times slices in $D^3\times I$ inside $S^4$ describing a link map where we have specified a basing paths and meridians on the central time slice by blue points on each link component. The $x$ component is the component at the top on the left, the second components is the circle round the band and the third component is the right most component. The orientation of each components travels through the basepoint to the left.} 
    \label{fig:Pic1}
\end{figure}
\begin{proof}
Using Figure \ref{fig:Pic1}, we have a description of a three-component link map inside $S^4$ which we call $f$.  To compute the first component of $\sigma^3$, consider the null-homotopy from the central time slice to the end. The sign of the self-intersections in the corresponding link map is labelled on Figure \ref{fig:Pic1}. The corresponding element of $\gamma_x$ at the first intersection on this region is $z$ (we could have chosen $\overline{z}$ here). The corresponding element of $\Gamma_x$ of the next self-intersection in this region is given by 
$zyz^{-1}y^{-1}z^{-1}\in \Gamma_x$. Hence, the first component of $\sigma^3(f)$ is given by
\[
z\left(\left[y,\,z\right]-1\right)+z^{-1}\left(\left[z,\,y\right]-1\right)\in\mathbb{Z}\Gamma_x\text{.}
\]
We now compute the third component of $\sigma^3$. Using the Wirtinger presentation one can show that the meridian of the first component, below the second component, is given by $y^{-1}xy$. We now study the homotopy from the start to the central time slice. The group element associated to the first double point in this region is $x\in \Gamma_z$. The group element associated to the next intersection in this region is $y^{-1}xy\in \gamma_z$ this is equivalent to the group element $x\left[x,\, y\right]$. Hence the third component is 
\[
x\left(1-\left[x,y\right]\right)+x^{-1}\left(1-\left[y,\,x\right]\right)\in\mathbb{Z}\Gamma_z\text{.}
\]
The second component is embedded and thus the second component of $\sigma^3$ vanishes.

If we remove a sphere the link becomes trivial. To see this, we use repeated applications of Lemma \ref{containment}. Notice that $p_i(\widetilde{\sigma}^3(f))=0$ for each $i$. Using that the Kirk invariant is injective \cite{ST} this implies that $i(f)$ is a trivial link map for all $i$ which proves the result. 
\end{proof}
\end{theo}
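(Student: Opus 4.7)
The plan is to construct $f$ explicitly from the link $L$ by a three-component analogue of the Jin--Kirk construction: extend the central time-slice link $L$ forwards by a null-homotopy of the $x$-component (supported in $S^3\times[\tfrac12,1]$) and backwards by a null-homotopy of the $z$-component (supported in $S^3\times[0,\tfrac12]$), then cap the resulting cylinder with two $D^4$'s and each component with $D^2$'s, in the manner described for the two-component JK-construction. Since the $y$-component is held fixed and is already embedded in every time-slice, it remains embedded in $S^4$, so $\sigma_y(f)=0$ automatically. The self-intersections of the other two components arise exactly where the null-homotopy track crosses itself, and the signs and locations can be read off Figure \ref{fig:Pic1}.

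To compute $\sigma_x(f)$, I would restrict attention to the forward half and track the two self-intersections of the $x$-sphere's null-homotopy in the complement of $f_y\sqcup f_z$. At the first double point, the obvious Whitney-like loop is isotopic in $S^4\setminus(f_y\cup f_z)$ to the meridian $z$ of the third component (choosing orientation from the basing path shown), giving group element $z\in\Gamma_x$. At the second double point, the loop wraps once around the $z$-component, once around the $y$-component in opposite senses, producing the word $zyz^{-1}y^{-1}z^{-1}=z[y,z]^{-1}\cdot [y,z]=zs$ in $\Gamma_x$ (using the defining relation $[y,z]=s$ and the centrality of $s$). The two intersections appear with opposite signs, and adding the corresponding Hermitian conjugate contribution from the self-intersection convention yields
\[
\sigma_x(f)=z(s-1)+z^{-1}(s^{-1}-1).
\]
An entirely analogous analysis in the backward half gives $\sigma_z(f)$. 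Here one must compute the meridian of the $x$-component after it has crossed below the $y$-component in Figure \ref{fig:Pic1}; the Wirtinger presentation gives this meridian as $y^{-1}xy$. The two double points of the $z$-sphere's null-homotopy thus contribute group elements $x$ and $y^{-1}xy=x[x,y]=xu$ in $\Gamma_z$, which with opposite signs assemble into $x(1-u)+x^{-1}(1-u^{-1})$.

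Non-triviality of $f$ then follows because the displayed triple is visibly nonzero in $K$, so its class in $\widetilde K$ is nonzero, so by Proposition \ref{sigma} and the well-definedness of $\widetilde{\sigma}^3$ the link map cannot be link-homotopic to the trivial link map. For the sublink statement I would invoke Lemma \ref{containment}: the projection $p_i$ kills the $i$-th meridian and therefore also kills the commutator generators $s=[y,z]$, $t=[z,x]$, $u=[x,y]$ whenever one of their factors is set to $1$. A direct check shows $p_i\circ\widetilde{\sigma}^3(f)=0$ for each $i\in\{1,2,3\}$, since each term in the computed triple contains a factor $s-1$, $s^{-1}-1$, $1-u$, or $1-u^{-1}$. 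By Schneiderman--Teichner's classification (Theorem \ref{clasif}), the Kirk invariant $\sigma$ is injective, so each two-component sublink $i(f)$ is link-homotopically trivial.

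The main obstacle is the bookkeeping in the middle step: one must correctly identify the Whitney-circle-type loop at each self-intersection as an element of the Milnor group $\Gamma_x$ or $\Gamma_z$, which requires (i) choosing a consistent basing path and sheet-ordering at each double point, (ii) reading off the Wirtinger relations that let one rewrite a loop passing under a strand in terms of the fixed meridians $x,y,z$, and (iii) using the $F/F_3$ relations (centrality of $s,t,u$ and the identifications $s=[y,z]$, etc.) to put the resulting word into the normal form $y^i s^j z^k$ (respectively $x^i u^j y^k$). Once these calculations are organised, non-triviality and the sublink-triviality claims are formal consequences of Lemma \ref{containment} and Theorem \ref{clasif}.
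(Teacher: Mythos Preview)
Your approach is essentially identical to the paper's: build $f$ by a three-component Jin--Kirk construction, read off the double-point loops from the movie, use the Wirtinger relation to rewrite the relevant meridian as $y^{-1}xy$, and then invoke Lemma~\ref{containment} together with the injectivity of the Kirk invariant for the Brunnian claim.

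There is one algebraic slip worth flagging. You write
\[
zyz^{-1}y^{-1}z^{-1}=z[y,z]^{-1}\cdot [y,z]=zs,
\]
but neither equality holds: the middle expression is just $z$, while in fact $zyz^{-1}y^{-1}z^{-1}=[z,y]\,z^{-1}=s^{-1}z^{-1}=z^{-1}s^{-1}$. This is exactly the element the paper obtains. Your final formula for $\sigma_x(f)$ is nevertheless correct, because $\mu$ is only defined modulo $g\sim g^{-1}$ and $(z^{-1}s^{-1})^{-1}=zs$; so the two contributions $\{z,\,zs\}$ and $\{z,\,z^{-1}s^{-1}\}$ yield the same $\lambda=\mu+\overline{\mu}$. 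Just be aware that the displayed chain of equalities is not literally valid in $\Gamma_x$.

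Everything else---the computation of $\sigma_z$, the vanishing of $\sigma_y$, the observation that $p_i$ kills the commutator factors so each two-component sublink has trivial Kirk invariant---matches the paper's argument.
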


Paul Kirk had a version of his invariant for more than two components \cite{PK}. If we consider the link map constructed using Figure \ref{fig:Pic1}, his invariant fails to be able to distinguish between this non-trivial link map and a trivial link map. We will give a name for link maps which a trivial once a component is removed.
 
\begin{defn}
 We call a link map \emph{Brunnian} if the removal of any single component results in a trivial link map up to link homotopy. We will denote the set of three-component Brunnian link maps by $\BL_{2,2,2}^4$.
\end{defn}

\begin{exam}
Let $f$ be the link map from Theorem \ref{R1} with the same basing paths and orientations. Let $g=f\circ k$ where $k:S^2_x\coprod S^2_y\coprod S^2_z\rightarrow S^2_x\coprod S^2_y\coprod S^2_z$ which is a reflection on $S^2_y$ and the identity elsewhere. This gives us
\begin{align*}
    \sigma^3(f)&=\left(z\left(s-1\right)+z^{-1}\left(s^{-1}-1\right),\, 0,\, x\left(1-u\right)+x^{-1}\left(1-u^{-1}\right)\right)\\
    \sigma^3(g)&=\left(z\left(s^{-1}-1\right)+z^{-1}\left(s-1\right),\, 0,\, x\left(1-u^{-1}\right)+x^{-1}\left(1-u\right)\right)\text{.}
\end{align*}
If we consider the based verison of the invariant it is clear that these are two different based link maps. However, we must check that the unbased case gives two distinct equivalence classes. Let us fix the representative of $\sigma^3(g)$ as above. The equivalence class for $\sigma^3(f)$ is described by
\[
\left(z\left(s^{1-a}-s^{-a}\right)+z^{-1}\left(s^{-1+a}-s^{a}\right),0, x\left(u^{-a}-u^{1-a}\right)+x^{-1}\left(u^{a}-u^{-1+a}\right)\right)\text{,}
\]
where $a\in\mathbb{Z}$. Taking the difference and looking at the first component we get
\[
z(s^{1-a}-s^{-a}-s^{-1}+1)+z^{-1}(s^{-1+a}-s^{a}-s+1).
\]
A necessary condition for both link maps to be equal is to have an $a\in\mathbb{Z}$ such that 
\[
s^{1-a}-s^{-a}-s^{-1}+1=0\text{.}
\]
Clearly, such an $a$ must have $|a|<2$. Checking each value of $a$ remaining, it is evident we can never solve the above equation. Hence, as unbased link maps they are not equal.
\end{exam}
This establishes the following.
\begin{theo}
For each $n\geq3$ there exists link maps $f=f_1\sqcup\cdots\sqcup f_n$ and $f^\prime=f_1^\prime\sqcup\cdots\sqcup f_n^\prime$ such that for $i$, $f_i(S^2)=f_i^\prime(S^2)$, but $f$ and $f^\prime$ are not link homotopic
\end{theo}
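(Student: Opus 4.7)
The plan is to bootstrap from the $n=3$ case just established in the example, by stabilising with trivial components.

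For the base case $n=3$, the preceding example already provides the link maps $f$ and $g = f \circ k$ (with $k$ reflecting the $y$-sphere). These share the same component images by construction, and the calculation with $\widetilde{\sigma}^3$ shows they are not link homotopic.

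For $n > 3$, I would take the three-component pair $(f,g)$ above and, in a small ball $B \subset S^4$ disjoint from the image of $f$ (equivalently of $g$, since their images agree componentwise), add $n-3$ small, pairwise disjoint, unknotted and unlinked embedded $2$-spheres $e_4,\ldots,e_n$ inside $B$. Define
\[
F := f_x \sqcup f_y \sqcup f_z \sqcup e_4 \sqcup \cdots \sqcup e_n,\qquad F' := g_x \sqcup g_y \sqcup g_z \sqcup e_4 \sqcup \cdots \sqcup e_n.
\]
By construction $F_i(S^2) = F'_i(S^2)$ for every $i \in \{1,\ldots,n\}$: on components $1,2,3$ this is the content of the $n=3$ example, and on components $4,\ldots,n$ both link maps use literally the same embeddings $e_j$.

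The remaining task is to show $F$ and $F'$ are not link homotopic. For this, iterate the forgetful map: composing the maps $i$ of Lemma \ref{containment} that forget the $j$th component for $j=4,\ldots,n$ yields a well-defined map $\LM_{2,\ldots,2}^4 \to \LM_{2,2,2}^4$ (forgetful maps are well defined under link homotopy since any link homotopy of the $n$-component map restricts to one of the $3$-component sublink). Applying this map sends $F \mapsto f$ and $F' \mapsto g$. If $F$ and $F'$ were link homotopic, then so would $f$ and $g$, contradicting the previous example.

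The only step with any content is verifying that the auxiliary embedded spheres can be placed disjointly from the images of $f$ and $g$; this is immediate since both images are compact in $S^4$, so a small ball in their common complement exists, and small unknotted spheres in such a ball can be arranged to be pairwise unlinked. Everything else is formal via the naturality of link homotopy under forgetting components and the non-triviality computation already given for $n=3$.
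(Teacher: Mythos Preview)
Your proposal is correct and follows the same approach as the paper: the $n=3$ case is exactly the preceding example, and for $n>3$ the paper leaves the extension implicit (writing only ``This establishes the following''), while you spell out the natural stabilisation by disjoint trivial spheres together with the forgetful-map argument. One minor point: Lemma~\ref{containment} as stated concerns only the map $\LM_{2,2,2}^4 \to \LM_{2,2}^4$, so strictly speaking you are invoking the obvious analogue for general $n$ rather than that lemma itself, but as you note this is immediate from the definition of link homotopy.
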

\subsection{New invariants}\label{newinv}
 Frequently, it is difficult to differentiate two elements of $\LM_{2,2,2}^4$ using $\tilde{\sigma}^3$ since it can be difficult to tell if two representatives of elements of $\widetilde{K}$ are in the same equivalence class.  This subsection will focus on extracting new invariants from $\tilde{\sigma}^3$ which also are independent of our choice of basing path for each component.
 
 Let $f\in \LM_{2,2,2}^4$ and write
 \[
 \tilde{\sigma}^3\left(f\right)=\left(\sum_{\left(i,j,k\right)\in\mathbb{Z}^3}a^f_{ijk}y^is^jz^k, \sum_{\left(i,j,k\right)\in\mathbb{Z}^3}b^f_{ijk}z^it^jx^k,\sum_{\left(i,j,k\right)\in\mathbb{Z}^3}c^f_{ijk}x^iu^jy^k\right)
 \]
 for some choice of basing paths for each sphere, where $a_{ijk}^f, b_{ijk}^f, c_{ijk}^f\in\mathbb{Z}$.
 Let, $A_f$ be the unordered $n$-tuple of non-zero $a^f_{ijk}$. Define $B_f$ and $C_f$ similarly. We have the following
 \begin{prop}
Let $f, g$ be link homotopic link maps. Then $A_f=A_g$, $B_f=B_g$ and $C_f=C_g$.

 \begin{proof}
 The proposition follows from immediately from the definition of $\widetilde{\sigma}^3$.
 \end{proof}
 \end{prop}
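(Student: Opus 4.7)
The plan is to invoke the fact that $\widetilde{\sigma}^3$ is a well-defined invariant on $\LM_{2,2,2}^4$: since $f$ and $g$ are link homotopic, $\widetilde{\sigma}^3(f)=\widetilde{\sigma}^3(g)$ in $\widetilde{K}$, which means any two representatives in $K$ differ by the $\mathbb{Z}^6$-action of Definition \ref{action}. I would then simply inspect that action: in the first component it sends the basis element $y^is^jz^k$ to $y^is^{j-ci+fk-ak+bi}z^k$ with unchanged coefficient $a_{ijk}$, and analogously for the other two components. In particular it preserves the pair of exponents $(i,k)$, does not alter the integer coefficients, and for fixed $(a,b,c,d,e,f)\in\mathbb{Z}^6$ the map $(i,j,k)\mapsto(i,\,j-ci+fk-ak+bi,\,k)$ is a bijection of $\mathbb{Z}^3$.

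From this it follows that the action merely relabels which basis element each coefficient is attached to, while leaving the underlying multiset of nonzero coefficients untouched. Using the fact that the normal form $y^is^jz^k$ for $F/F_3$ (and the analogous forms for the other two factors) is unique, the coefficients are well-defined elements of $\mathbb{Z}$, so the assignment sending an element of $\mathbb{Z}[F/F_3]$ to its unordered tuple of nonzero coefficients descends to a well-defined function on orbits. Applying this to each of the three factors of $\widetilde{\sigma}^3$ gives $A_f=A_g$, $B_f=B_g$, and $C_f=C_g$. There is no real obstacle here; the only thing to check is the bookkeeping that Definition \ref{action} acts by bijections on index triples and leaves coefficients alone, which is immediate from the formula.
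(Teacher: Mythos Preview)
Your proof is correct and follows essentially the same approach as the paper, which simply records that the result is immediate from the definition of $\widetilde{\sigma}^3$. You have spelled out the one-line justification in detail: the $\mathbb{Z}^6$-action of Definition~\ref{action} only shifts the middle exponent and hence permutes basis elements bijectively without altering coefficients, so the unordered tuple of nonzero coefficients is constant on orbits.
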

 \begin{prop}
Let $f\in \LM_{2,2,2}^4$ with some choice of basing path. Suppose $g$ is a link map link homotopic to $f$. Then for any choice of basing path there exist an $l\in \mathbb{Z}$ such that $a_{ilk}^g=a_{ijk}$ and 
\[
j\equiv l\mod \gcd\left(i, k\right).
\]
\begin{proof}
It is clear that the exponent of $s$ does not change if we do a link homotopy of a based link map, so we must show that this is independent of our choice of basing path. Notice that a change of basing paths does the following transformation: 
\[
y^is^jz^k\mapsto y^is^{j+(b-c)i+(f-a)k}z^k
\]
Hence, taking $j$ modulo $\gcd\left(i, k\right)$ we have a number independent of our choice of basing path, which proves the proposition.
\end{proof}
\end{prop}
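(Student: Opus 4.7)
The plan is to separate two distinct sources of change in the exponent of $s$: a based link homotopy of a fixed based representative, and the choice of basing paths relating $f$ to $g$. The first is absorbed by Proposition~\ref{sigma}, which says $\sigma^3$ is invariant under based link homotopy, so the entire content of the proposition lies in controlling the second source.

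First I would use that $f$ and $g$ are link homotopic as unbased link maps together with the well-definedness of $\widetilde{\sigma}^3$ on $\LM_{2,2,2}^4$ to conclude that $[\sigma^3(f)] = [\sigma^3(g)]$ in $\widetilde{K}$. By the definition of $\widetilde{K}$ as the orbit space of the $\mathbb{Z}^6$-action in Definition~\ref{action}, this equality means that for the chosen basings of $f$ and $g$ there exist integers $a,b,c,d,e,f'$ with
\[
(a,b,c,d,e,f')\cdot \sigma^3(f) = \sigma^3(g) \in K.
\]

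Next I would read off the first coordinate of this equality. According to Definition~\ref{action} the action on $\mathbb{Z}\Gamma_x$ sends $y^i s^j z^k$ to $y^i s^{j+(b-c)i+(f'-a)k} z^k$, so matching coefficients on the two sides yields $a^g_{i,l,k} = a^f_{ijk}$ with $l := j + (b-c)i + (f'-a)k$. The key observation is that $(b-c)i + (f'-a)k$ lies in the subgroup $i\mathbb{Z} + k\mathbb{Z} = \gcd(i,k)\mathbb{Z}$ of $\mathbb{Z}$, so $l \equiv j \pmod{\gcd(i,k)}$, which is exactly the conclusion.

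There is no genuine obstacle here beyond bookkeeping: the only subtlety is the logical direction of the claim. The proposition only asserts existence of an $l$ witnessing the congruence for the particular basing change relating $f$ to $g$; I do not need every pair $(b-c,f'-a) \in \mathbb{Z}^2$ to be realisable, only that whichever pair does occur produces an $l$ that differs from $j$ by an element of $\gcd(i,k)\mathbb{Z}$, and the formula $l = j + (b-c)i + (f'-a)k$ supplies this witness.
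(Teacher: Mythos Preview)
Your proposal is correct and follows essentially the same route as the paper: reduce to the basing indeterminacy via the based invariance of $\sigma^3$, then use the explicit transformation $y^is^jz^k\mapsto y^is^{j+(b-c)i+(f'-a)k}z^k$ from Definition~\ref{action} to conclude that the shift in the $s$-exponent lies in $i\mathbb{Z}+k\mathbb{Z}=\gcd(i,k)\mathbb{Z}$. The only difference is that you phrase the first step through $\widetilde{K}$ and the orbit-space description, which is a slightly more formal packaging of the same observation.
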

 Similar invariants can be derived by looking at the power of the $t$ and $u$ terms in $\sigma_y$ and $\sigma_z$ respectively. These invariants are similar to Milnor's triple linking number introduced in \cite{LGM}, with the only significant difference being that there is no obvious sense in which we have symmetry relations. However, this invariant requires us to throw away information of the other intersections. We will now construct a similar invariant but which keeps track of all changes to the intersections.
 
Let $n_1$, $n_2$ and $n_3$ non-negative integers with $n=2n_1+2n_2+2n_3$. Let $\mathcal{S}_{n_k}$ be the symmetric group on $n_k$ variables when $n_k$ is non-zero. We denote $\mathcal{S}_{n_1}\times \mathcal{S}_{n_2}\times \mathcal{S}_{n_3}$ by $\mathcal{S}_{(n_1,n_2,n_3)}$. This group has an action on $\mathbb{Z}^n=\left(\mathbb{Z}^{2}\right)^{n_1}\times\left(\mathbb{Z}^{2}\right)^{n_2}\times\left(\mathbb{Z}^{2}\right)^{n_3}$ given by permuting components.

We can think of the action of $\mathcal{S}_{(n_1, n_2, n_3)}$ as being represented by a group of matrices $P_{(n_1, n_2, n_3)}$.
 
 We define
\[
A_{k}\left(\mathbb{Z}^n\right):=\{x+N\mid x\in\mathbb{Z}^n\text{ and }N\text{ is a submodule of }\mathbb{Z}^n\text{ of rank }k\}\text{.}
\]
\begin{defn}
Take the quotient space
\[
\mathcal{A}_{\left(n_1,n_2,n_3\right)}:=\coprod_{k=0}^3A_k\left(\mathbb{Z}^n\right)/P_{\left(n_1,n_2,n_3\right)}\text{,}
\]
and define
\[
\mathcal{A}:=\coprod_{\left(n_1,n_2,n_3\right)\in\mathbb{Z}_{\geq0}^3}\mathcal{A}_{\left(n_1,n_2,n_3\right)}\text{.}
\]
\end{defn}
 We now wish to construct a map 
 \[
 \mu:K\rightarrow \mathcal{A}
 \]
which descends to a well-defined map 
\[
 \overline{\mu}:\widetilde{K}\rightarrow \mathcal{A}.
\]
Let
 \[
 v=\left(\sum_{\left(i,j, k\right)\in\mathbb{Z}^3}a_{ijk}y^is^jz^k,\sum_{\left(i^\prime,j^\prime, k^\prime\right)\in\mathbb{Z}^3}b_{i^\prime j^\prime k^\prime}z^{i^\prime}t^{j^\prime}x^{k^\prime},\sum_{\left(i^{\prime\prime},j^{\prime\prime}, k^{\prime\prime}\right)\in\mathbb{Z}^3}c_{i^{\prime\prime}j^{\prime\prime}k^{\prime\prime}}x^{i^{\prime\prime}}u^{j^{\prime\prime}}y^{k^{\prime\prime}}\right)
 \]
 
and $n_{1}=|A_v|$, $n_{2}=|B_v|$ and $n_{3}=|C_v|$. Additionally, place an ordering on the elements of $A_v$, $B_v$, and $C_v$. We write the $l$th element of $A_v$, where $1\leq l\leq n_{1}$, as $a_{i_lj_lk_l}$ corresponding to the group element $y^{i_l}s^{j_l}z^{k_l}$. We write the $l$th element of $B_v$ and $l$th element of $C_v$ similarly. We now define $\mu(v)$ to be the set of vectors that satisfy 
\begin{align*}
\sum_{l=1}^{n_1}(j_l+q_1k_l+q_2i_l)e_{2l-1}+&\sum_{l=1}^{n_1}(a_{i_lj_lk_l})e_{2l}+\sum_{l=1}^{n_2}\left(j^{\prime}_l+q_2k_l^\prime+q_3i^\prime_l\right)e_{n_1+2l-1}+\sum_{l=1}^{n_2}(b_{i^\prime_lj^\prime_lk^\prime_l})e_{2l}\\&+\sum_{l=1}^{n_3}\left(j^{\prime\prime}_l+q_1i^{\prime\prime}_l+q_3k^{\prime\prime}_l\right)e_{n_1+n_2+2l-1}+\sum_{l=1}^{n_3}(c_{i^{\prime\prime}_lj^{\prime\prime}_lk^{\prime\prime}_l})e_{2l}\in\mathcal{A}\text{.}
\end{align*}

where $q_1,q_2,q_3\in \mathbb{Z}$ and $e_l$ to the vector which has all zeros except for a $1$ in the $l$th position.
\begin{prop}
The map $\mu:K\rightarrow \mathcal{A}$ is well defined and descends to a map on the quotient $\overline{\mu}:\widetilde{K}\rightarrow \mathcal{A}$.
\begin{proof}
Indeterminacy arises from our choice of ordering on the intersections. However, this is accounted for by the choice of quotients used to construct $\mathcal{A}$. 

The submodule associated to the affine space defined by $\mu$ is dependent on the powers of the non-commutator generators of each group element which is unchanged by the action of $\mathbb{Z}^6$, which defines $\widetilde{K}$. Furthermore, the vector given by the powers of the commutator terms all lie in the same affine space so when considering different representatives we assign the same affine space.
\end{proof}
\end{prop}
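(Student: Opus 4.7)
The plan is to prove two separate statements: first, that the set $\mu(v)$ assigned to an element $v \in K$ is a well-defined class in $\mathcal{A}$ independent of the orderings chosen on the unordered tuples $A_v$, $B_v$, $C_v$; and second, that this class depends only on the $\mathbb{Z}^6$-orbit of $v$ under the action of Definition \ref{action}. Throughout, I will describe $\mu(v)$ intrinsically as a coset $x_v + N_v \subseteq \mathbb{Z}^n$, where $N_v$ is the submodule (of rank at most $3$) generated by the three vectors obtained by isolating the $q_1$-, $q_2$-, and $q_3$-contributions in the defining formula for $\mu(v)$, and $x_v$ is the vector obtained by setting $q_1 = q_2 = q_3 = 0$. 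In particular $N_v$ is determined purely by the exponents of the non-commutator generators $y, z, x$ in the list, while $x_v$ encodes the commutator exponents $j, j^\prime, j^{\prime\prime}$ together with the coefficients $a_{i_lj_lk_l}$, $b_{i^\prime_l j^\prime_l k^\prime_l}$, $c_{i^{\prime\prime}_l j^{\prime\prime}_l k^{\prime\prime}_l}$.

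For the first claim, the ordering on $A_v$ (and analogously on $B_v$ and $C_v$) enters only through the indices $l = 1, \dots, n_1$. Changing it permutes the pairs $(e_{2l-1}, e_{2l})$ among themselves within the first block and does not touch the other two blocks, which is precisely the action of an element of $\mathcal{S}_{(n_1, n_2, n_3)}$ realised by the matrix group $P_{(n_1, n_2, n_3)}$. Since $\mathcal{A}_{(n_1, n_2, n_3)}$ is defined as the quotient by this action, both the submodule $N_v$ and the coset $x_v + N_v$ are well defined up to $P_{(n_1, n_2, n_3)}$, so $\mu(v) \in \mathcal{A}$ is independent of the orderings.

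For the second claim, I would unpack Definition \ref{action} and observe that an element $(a,b,c,d,e,f) \in \mathbb{Z}^6$ leaves every coefficient $a_{ijk}, b_{ijk}, c_{ijk}$ unchanged and leaves every non-commutator exponent ($i, k$ in the first factor, $i^\prime, k^\prime$ in the second, $i^{\prime\prime}, k^{\prime\prime}$ in the third) unchanged; the only effect is to shift each commutator exponent $j, j^\prime, j^{\prime\prime}$ by an integer linear combination of the surrounding non-commutator exponents. A direct read-off identifies these shifts as exactly the three generators of $N_v$ parametrised by $q_1, q_2, q_3$ in the defining formula for $\mu$, with the parameters determined by the $\mathbb{Z}^6$-element (in fact the action factors through a $\mathbb{Z}^3$-quotient recording the differences $b-c$, $f-a$, $d-e$). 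Consequently $N_v$ is fixed by the action and the basepoint $x_v$ is translated by an element of $N_v$, so the coset $x_v + N_v \in \mathcal{A}$ is unchanged. The one piece of bookkeeping that needs care is the explicit matching between the $\mathbb{Z}^6$-parameters $(a,b,c,d,e,f)$ and the $\mathbb{Z}^3$-parameters $(q_1, q_2, q_3)$ governing $N_v$; this is a routine substitution rather than a genuine obstacle, and once it is visible the proposition follows.
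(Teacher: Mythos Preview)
Your proposal is correct and follows essentially the same approach as the paper's proof: both arguments handle well-definedness via the $P_{(n_1,n_2,n_3)}$-quotient absorbing the ordering ambiguity, and both handle descent by observing that the $\mathbb{Z}^6$-action fixes the submodule $N_v$ (since it only depends on the non-commutator exponents) while translating the basepoint $x_v$ within the coset. Your version is simply more explicit, spelling out the coset description $x_v + N_v$ and the matching $(q_1,q_2,q_3) = (f-a,\,b-c,\,d-e)$ that the paper leaves implicit.
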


This invariant is similar to the total Milnor quotient in \cite{MP} by Davis, Nagel, Powell and Orson. They made use of exterior algebra since Milnor's $\bar{\mu}$ invariants obey some relations by permuting elements. We do not have these symmetry relations, as our objects that are ``triple linking'' are a closed loop and two immersed two-spheres.

The map $\overline{\mu}$ is clearly stronger than our previous invariant which considered the power of the commutator term of the intersection which is shown in the following example.

\begin{exam}\label{examp}
Let $f$ be the link map from Theorem \ref{R1} and $g$ be a similar link map which can be represented by a similar movie but the band travels around the second component twice. We thus have
\[
\widetilde{\sigma}^3(f)=\left(z(s-1)+z^{-1}(s^{-1}-1),\, 0,\,x(1-u)+x^{-1}(1-u^{-1})\right)
\]
and
\[
\widetilde{\sigma}^3(g)=\left(z(s^2-1)+z^{-1}(s^{-2}-1),\, 0,\,x(1-u^2)+x^{-1}(1-u^{-2})\right).
\]
Computing $\overline{\mu}$ for each we get
\[
\overline{\mu}\left(\sigma^3\left(f\right)\right)=\Big(
1+q_1,\,1,\,q_1,\,-1,\,-1-q_1,\,1,\,-q_1,\,-1, 1+q_1,\,-1,\,q_1,\,1,\,-1-q_1,\,-1,\,-q_1,\,1\Big)
\]
and
\[
\overline{\mu}\left(\sigma^3\left(f\right)\right)=\Big(
2+q_1,\,1,\,q_1,\,-1,\,-2-q_1,\,1,\,-q_1,\,-1, 2+q_1,\,-1,\,q_1,\,1,\,-2-q_1,\,-1,\,-q_1,\,1\Big)\text{,}
\]
for some choice of ordering of the intersections. 

Each of these affine lines has a unique real affine line in $\mathbb{R}^8$ which intersects every point in the affine subspace. Computing the distance of these ``completed'' affine lines from the origin we get $10$ and $16$ respectively. The actions of $P_{4,0,4}$ are orthogonal so both distances are unchanged by the action of $P_{4,0,4}$.
\end{exam}

If we post compose $\sigma^3$ with $\overline{\mu}$ we have a map from $\LM_{2,2,2}^4\rightarrow \mathcal{A}$, we will often refer to this map $\overline{\mu}$ as it will be clear from context which domain we are considering.

\begin{prop}
Let $f\in \LM_{2,2,2}^4$ such that the representative affine space $\overline{\mu}\left(f\right)$ is a single vector. Then $f\in \BL_{2,2,2}^4$.
\begin{proof}
If $\overline{\mu}(f)$ is represented by a single vector then, in $\sigma^3(f)$, each component is given by a Laurent polynomial in the commutator terms, $s$, $t$ and $u$. Remove a component to get a two-component link map. Using Proposition \ref{containment} three times to compute the Kirk invariant and using the injectivity of the Kirk invariant proves the result.
\end{proof}
\end{prop}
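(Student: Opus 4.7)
The plan is to unpack the single-vector hypothesis as a concrete vanishing statement on the exponents appearing in $\sigma^3(f)$, and then push the resulting simplified expression through $p_x$, $p_y$, $p_z$ to exhibit vanishing Kirk invariants on each two-component sublink. First, I would examine the formula defining $\mu$: each coordinate of the representative affine subspace has the shape $j_l+q_1k_l+q_2i_l$ (with the analogous expressions in the other two blocks), so the hypothesis $\overline{\mu}(f)\in A_0(\mathbb{Z}^n)/P_{(n_1,n_2,n_3)}$ forces all coefficients of $q_1,q_2,q_3$ to vanish. This yields $i_l=k_l=0$, $i_l'=k_l'=0$, $i_l''=k_l''=0$ for every $l$, and therefore
\[
\sigma^3(f)=\Big(\sum_{j}a_{0j0}s^{j},\ \sum_{j'}b_{0j'0}t^{j'},\ \sum_{j''}c_{0j''0}u^{j''}\Big),
\]
confirming the remark preceding the proposition that each entry is a Laurent polynomial purely in its commutator generator.

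Next, I would apply Lemma \ref{containment} for each $i\in\{x,y,z\}$. Taking $i=x$ as the representative case, the projection $p_x$ kills the meridian $x$ and hence also the commutators $t=[z,x]$ and $u=[x,y]$, so applied to the reduced form of $\sigma^3(f)$ it produces
\[
p_x(\widetilde{\sigma}^3(f))=\Big(\sum_{j'}b_{0j'0},\ \sum_{j''}c_{0j''0}\Big)\in\mathbb{Z}\oplus\mathbb{Z}\subset\mathbb{Z}[\mathbb{Z}]\oplus\mathbb{Z}[\mathbb{Z}],
\]
which by commutativity of the square in Lemma \ref{containment} equals the Kirk invariant $\sigma(x(f))$ of the sublink obtained by forgetting $f_x$.

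Finally, I would invoke Theorem \ref{clasif} in both of its roles. Its image statement confines the Kirk invariant to $z\cdot\mathbb{Z}[z]\oplus z\cdot\mathbb{Z}[z]$, whose intersection with the constant subring $\mathbb{Z}\oplus\mathbb{Z}$ is trivial; so the two integer sums above must both be zero and $\sigma(x(f))=0$. Injectivity of the Kirk invariant, also from Theorem \ref{clasif}, then forces $x(f)=0$ in $\LM_{2,2}^4$. Repeating verbatim for $i=y$ and $i=z$ shows every two-component sublink of $f$ is link-homotopically trivial, so $f\in\BL_{2,2,2}^4$. The only point that requires care is the opening step: one needs to confirm that the permutation action $P_{(n_1,n_2,n_3)}$ cannot collapse an affine subspace of positive rank to a single-vector orbit. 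Since that action merely permutes coordinates within each of the three blocks and preserves the rank of the direction submodule, this is immediate, and I do not expect it to be a serious obstacle.
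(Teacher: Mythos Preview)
Your proof is correct and follows the same route as the paper's: deduce from the single-vector condition that each entry of $\sigma^3(f)$ is a Laurent polynomial in its commutator variable alone, then apply Lemma~\ref{containment} and the injectivity of the Kirk invariant. You simply make explicit two steps the paper leaves to the reader---the derivation $i_l=k_l=0$ (etc.) from the rank-zero hypothesis, and the use of Theorem~\ref{clasif} to see that a Kirk invariant lying in the constant subring must vanish---and your closing remark on the $P_{(n_1,n_2,n_3)}$ action preserving rank is a welcome sanity check.
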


For our final invariant on $\widetilde{K}$ we seek to place a notion of size of elements of our invariant.
\begin{defn}
 Let $A\subset\mathbb{Z}^3$ be finite and $d:\mathbb{Z}^3\times\mathbb{Z}^3\rightarrow \mathbb{R}$ be a metric. We call 
 \[
 D_d\left(A\right):=\max_{\left(a,b\right)\in A\times A}d(a,b),
 \]
 the \emph{diameter} of $A$.
\end{defn}

Take a representative of $\sigma^3\left(f\right)$, we define the following subset of $\mathbb{Z}^3$
\[
A^\prime_f:=\left\{\left(i, j, k\right)\in \mathbb{Z}^3| a_{ijk}\neq0\right\},
\]
We can define the sets $B^\prime_f$ and $C^\prime_f$ analogously. The sets $A_f^\prime$ and $B_f^\prime$ and $C_f^\prime$ are contained within equivalence classes of subsets of $\mathbb{Z}^3$, which we will denote by $\left[A_f^\prime\right]$, $\left[B_f^\prime\right]$ and $\left[C_f^\prime\right]$, where the equivalences classes are given by the action of $\mathbb{Z}^6$ on $\mathbb{Z}\Gamma_i$.

\begin{defn}
Let $f\in K$ and $d:\mathbb{Z}^3\times\mathbb{Z}^3\rightarrow \mathbb{R}$ be a metric. We define
\[
W\left(f\right):=\min_{f\in[f]}\left(D_d\left(A_f^\prime\right)+D_d\left(B_f^\prime\right)+D_d\left(C_f^\prime\right)\right)\text{,}
\]
to be the \emph{width of $f$ with respect to $d$}.
\end{defn}
\section{Constructing new link maps}\label{chap6}

\subsection{Annular link maps}\normalfont

Our inspiration for Theorem \ref{R1} was to imagine taking the connect sum of two-component link maps  but around one of the tubes place an unknotted sphere such that the tube links this sphere. Our goal is to formalise this idea and provide a formula for $\widetilde{\sigma}^3$ for three-component link maps constructed via this method. We now introduce some formalism.

Let $D_1,\ldots D_n$ be disjoint embedded discs in $B^3$ with boundary $C_i$ such that $C_i\cap \partial B^3=\emptyset$ for all $i$. Additionally have all $C_i$ lie inside $\mathbb{R}^2\times \{0\}\cap B^3$ and are not nested in the plane.
\begin{defn}\normalfont
A \emph{$n$-component $2$-string link} is a smooth/topologically flat proper embedding of 
\[
\coprod_{i=1}^nS^1\times \left[0, 1\right]\rightarrow B^3\times \left[0,1\right],
\]
such that the image of each annulus is bounded by $C_i\times \{0\}$ and $C_i\times \{1\}$, with compatible orientation. An \emph{Annular link map} is defined similarly but we allow this map to be an immersion with self-intersections, on the same component, in the interior of each annulus. 
\end{defn}
We consider annular link maps up to link homotopy i.e. a  homotopy through annular link map. Denote the set of three-component annular up to link homotopy by $\SL_{2,2,2}^4$. Let $\ESL_{2,2,2}^4\subset \SL_{2,2,2}^4$ be the subgroup of three-component annular link maps which are link homotopic to a topologically flat embedded annular link map. An embedded annular link map which maps each component as $(p,t)\mapsto (f_t(p), t)$, where each $f_t$ is an embedding for all $t\in I$, is called a \emph{pure braid} .

We construct a link map from an annular link map by taking $B^3\times I$ and gluing along another $B^3\times I$ along $S^2\times I$ giving $S^3\times I=B^3\times I\cup_{S^2\times I}B^3\times I$. We now cap off both ends of $S^3\times I$ with $D^4$ and cap off each end of the annulus with a slice disc which is link homotopic to the collection of disjoint $D_i$, giving rise to a map $\SL_{2,2,2}^4\rightarrow \LM_{2,2,2}^4$. If $X\in\SL_{2,2,2}^4$ denote its \emph{link map closure} by $f_{X}\in\LM_{2,2,2}^4$ .

The set of annular link maps can be equipped with a multiplication. Suppose we have two annular link maps $X, X^\prime:\coprod_{i=1}^nS^1\times\left[0,1\right]\rightarrow B^3\times \left[0,1\right]$. We define 
\[
\left(X\cdot X^\prime\right)(p,t)=\begin{cases}
X(p,2t) & 0\leq t\leq \frac{1}{2}\\
X^\prime(p, 2t-1)& \frac{1}{2}<t\leq 1\text{.}\\
\end{cases}
\]

If we consider the set of $n$-component annular link maps up to link homotopy then $\SL_{2,2}^4$ becomes a group with multiplication given by
\[
\left[X\right]\cdot\left[X^\prime\right]:=\left[X\cdot X^\prime\right]\text{.}
\]
The inverse is given by considering the involution $r:B^3\times \left[0,1\right]\rightarrow B^3\times \left[0,1\right]$, with
\[
r(p,t)=(p, 1-t)
\]
and also let $Y:\coprod_{i=1}^nS^1\times [0,1]\rightarrow B^3\times \left[0,1\right]$ with
\[
Y(p,t)=X(p,1-t)\text{.}
\]
We then have a annular link map $r\circ Y:\coprod_{i=1}^nS^1\times \left[0,1\right]\rightarrow B^3\times \left[0,1\right]$ for which
\[
\left[r\circ Y\right]\cdot\left[X\right]=\left[X\right]\cdot\left[r\circ Y\right]= \left[O\right],
\]
where $O$ is the trivial annular link map. The proof that this gives an inverse can be found in \cite{M21}. We can show that this multiplication is non-abelian using the $\sigma^3$ invariant. We first define the following. For a annular link map $X$ we will denote its inverse by $\overline{X}$.

\begin{defn}
Let $B$ be a three-ball contained inside $\mathring{B}^3$.
Let $X$ be a three-component annular link map with one of the components contained in a $B\times I\subset B^3\times I$ where the other components do not intersect $B\times I$ and the image of this component is a $C_i\times I$ for some $i\in\{1,2,3\}$. The other two components are the image of a $JK$ construction which has not been capped off. Then we call $X$ a \emph{three-component JK construction}.
\end{defn}

\begin{figure}
    \centering
    \includegraphics[width=13cm]{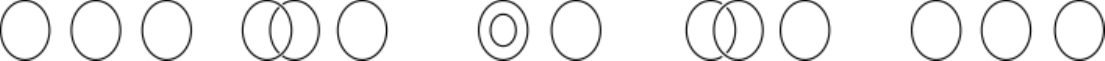}
    \caption{An example of an embedded annular link map.}
    \label{fig:PB}
\end{figure}

\begin{prop}[\cite{M21}]\label{commute}
The group of annular link maps up to link homotopy is non-abelian if $n\geq 3$.
\begin{proof}

Consider the three-component $JK$ construction inside $B^3\times I$ based on the  $JK$ construction using the Whitehead link, where the second component is trivial. Denote this by $X$, and similarly denote its inverse in $\SL_{2,2,2}^4$ by $\overline{X}$. Let $J$ be the annular link map described by Figure \ref{fig:PB}, inside the $B^3\times I$. Consider the stacking $XJ\overline{X}$, with link map closure $f_{XJ\overline{X}}$. We have
\[
\sigma^3(f_{XJ\overline{X}})=\left(z(s-1)+\bar{z}(\bar{s}-1),\, 0,\, x(1-u)+\bar{x}(1-\bar{u})\right)\text{,}
\]
as it is the same link map as in Theorem \ref{R1}.  Consider the link map closure $f_{X\overline{X}J}$. Notice that we can retract the tubes given by $J$ so that this is the same link map given by the closure of $X\overline{X}$. This gives a trivial link map \cite{M21} and thus $XJ\overline{X}$ and $X\bar{X}J$ are not equivalent.
\end{proof}
\end{prop}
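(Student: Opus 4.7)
The plan is to exhibit two three-component annular link maps whose stackings in opposite orders close to non-link-homotopic elements of $\LM_{2,2,2}^4$, and then distinguish those closures with the invariant $\widetilde{\sigma}^3$. Since the closure operation $\SL_{2,2,2}^4 \to \LM_{2,2,2}^4$ is well-defined and $\widetilde{\sigma}^3$ is a link homotopy invariant, producing such a pair forces noncommutativity in $\SL_{2,2,2}^4$, and a standard Markov-trick argument then extends this to every $n \geq 3$ by letting the extra strands be trivial cylinders.

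For the construction, I would take $X$ to be a three-component JK construction built from the Whitehead link: two of the annuli carry the traces of the Whitehead-link null-homotopies used in the JK construction (with their cups still open, since we are inside $B^3 \times I$), while the third component is a trivial cylinder $C_i \times I$ positioned so that it is threaded by the band of the JK traces. Let $\overline{X}$ denote the inverse in $\SL_{2,2,2}^4$ as defined by the involution $t \mapsto 1-t$. I would then choose an embedded pure braid $J$ in which the cylinder corresponding to the middle component is moved past one of the other strands --- on its own $J$ closes to a trivial link map, but it obstructs the retraction of the band in $X$ back through $\overline{X}$.

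I would then compare the link maps $f_{XJ\overline{X}}$ and $f_{X\overline{X}J}$. For the second, the concatenation $X\overline{X}$ is link homotopically trivial inside $B^3 \times I$, so the closure is link homotopic to the closure of $J$ alone; since $J$ is an embedded pure braid whose closure is the unlink of two-spheres, $\widetilde{\sigma}^3(f_{X\overline{X}J}) = 0$. For the first, I would argue that the band threaded through the middle cylinder cannot retract across $J$, so the closure agrees up to link homotopy with the link map $f$ of Theorem \ref{R1}, yielding
\[
\widetilde{\sigma}^3(f_{XJ\overline{X}}) = \bigl(z(s-1)+z^{-1}(s^{-1}-1),\, 0,\, x(1-u)+x^{-1}(1-u^{-1})\bigr),
\]
which is non-zero in $\widetilde{K}$. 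Hence the two closures are not link homotopic, so $[X][J][\overline{X}] \neq [X][\overline{X}][J]$ in $\SL_{2,2,2}^4$, and by the remark on adding trivial cylinders the same obstruction shows non-abelianness for every $n \geq 3$.

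The main obstacle I anticipate is the geometric identification of $f_{XJ\overline{X}}$ with the link map of Theorem \ref{R1}: one must trace through the stacking in $B^3 \times I$ and the two capping four-balls, carefully matching the resulting movie to Figure \ref{fig:Pic1} and keeping track of which self-intersections persist after the null-homotopies on the capping ends. Once that movie identification is made, the values of $\widetilde{\sigma}^3$ follow from the calculation already performed in the proof of Theorem \ref{R1}, and the invariance established in Proposition \ref{sigma} completes the argument.
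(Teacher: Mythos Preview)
Your proposal is correct and follows essentially the same approach as the paper: build $X$ as a three-component JK construction on the Whitehead link with one trivial component, take $J$ an embedded pure braid, and distinguish $XJ\overline{X}$ from $X\overline{X}J$ by computing $\widetilde{\sigma}^3$ on their closures, identifying the former with the link map of Theorem~\ref{R1} and the latter with the trivial closure. One small clarification: in the paper's definition of a three-component JK construction the trivial cylinder in $X$ is \emph{split} from the other two components, and the threading is created by the pure braid $J$ rather than being built into $X$ itself; with that adjustment your argument matches the paper's proof exactly.
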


\begin{defn}
Let $L\subset B^3$ be an oriented $n$-component link of circles. The \emph{group of singular concordances of $L$} is the set of flat topological immersions $\coprod_{i=1}^nS^1\times I\rightarrow B^3\times I$, which is a link map such that image of the boundary each annulus is $L\times\{0\}$ and $L\times\{1\}$ with compatible orientation and the only self-intersections are in the interior of each component, up to link homotopy. We denote this group by $C\left(L\right)$ and the group operation is given by stacking the concordances.
\end{defn}
To prove this has a group structure is similar to showing that the group of annular link maps also has a group structure.
\begin{prop}
Let $L$ and $L^\prime$ be concordant links in $B^3$. Then the groups $C(L)$ and $C(L^\prime)$ are isomorphic.
\begin{proof}
A similar proof is given in \cite{M21}. Let $Y$ be a concordance between $L^\prime$ and $L$ and let $X\in C\left(L\right)$ and $X^\prime\in C\left(L^\prime\right)$. We have two homomorphisms 
$\xi:C(L)\rightarrow C(L^\prime)$, $\xi^\prime:C(L^\prime)\rightarrow C(L)$ defined by
\[
\xi(X)=YX\overline{Y} \text{ and }
\xi^\prime(X^\prime)=\overline{Y}X^\prime Y.
\]
We have 
\[
\xi^\prime\circ \xi(X)=\overline{Y}YX\overline{Y}Y\sim_{lh} X\text{.}
\]
Hence, $\xi^\prime\circ \xi=\Id_{C(L)}$. Similarly, $\xi\circ \xi^\prime=\Id_{C(L^\prime)}$.
\end{proof}
\end{prop}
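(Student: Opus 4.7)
The plan is to use the standard conjugation argument outlined by the author and to verify the two points that actually need work: that the candidate maps $\xi,\xi'$ are well-defined group homomorphisms, and that the stacked concordance $\overline{Y}\cdot Y$ is link-homotopic (rel boundary) to the trivial product concordance on $L$, since that fact is the engine behind both composition identities $\xi'\circ\xi=\Id_{C(L)}$ and $\xi\circ\xi'=\Id_{C(L')}$.

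First I would set up the data carefully. A concordance $Y:L\times I\to B^{3}\times I$ between $L'$ and $L$ has boundary $L'\times\{0\}\sqcup L\times\{1\}$; its reverse $\overline{Y}$ is obtained by precomposing with the involution $r(p,t)=(p,1-t)$ and therefore goes from $L$ to $L'$. For $X\in C(L)$, the concatenation $YX\overline{Y}$ has top and bottom both $L'$ and whose only non-embedded behaviour occurs in the $X$-slab, so it lies in $C(L')$; symmetrically for $\xi'$. A short reparametrisation check shows $\xi$ is a homomorphism: in
\[
\xi(X_{1})\cdot \xi(X_{2}) \;=\; YX_{1}\overline{Y}\,YX_{2}\overline{Y},
\]
the middle $\overline{Y}Y$ can be replaced, up to link homotopy rel boundary, by the product concordance $L\times I$, whereupon the expression becomes $YX_{1}X_{2}\overline{Y}=\xi(X_{1}X_{2})$. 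The same substitution gives the two identities $\xi'\xi(X)=\overline{Y}YX\overline{Y}Y\sim_{lh}X$ and $\xi\xi'(X')\sim_{lh}X'$, proving $\xi$ and $\xi'$ are mutually inverse bijections.

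The main obstacle, and the only non-formal step, is justifying that $\overline{Y}\cdot Y$ is link-homotopic (rel boundary, and through singular concordances of $L$) to the trivial product concordance $L\times I$. This is the by-now standard reflection trick: parametrise $\overline{Y}\cdot Y:L\times I\to B^{3}\times I$ and consider the family of concordances obtained by restricting $\overline{Y}\cdot Y$ to shrinking neighbourhoods $L\times[\tfrac{1}{2}-s,\tfrac{1}{2}+s]$ of the symmetry locus, rescaled back to $L\times I$; as $s\to 0$ this limits to the product concordance, and the reflection symmetry $r$ of $\overline{Y}\cdot Y$ ensures each stage is a singular concordance of $L$. Any self-intersections introduced by $Y$ appear in symmetric cancelling pairs with respect to $r$, so they can be removed by Whitney moves taking place in a collar of the symmetry plane, keeping the boundary fixed; the homotopy stays a link homotopy throughout because all self-intersection pairs remain on the same component.

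With that lemma in hand the proposition is immediate. I would finally remark that the construction of $\xi$ depends a priori on the chosen concordance $Y$, but a different choice $Y_{1}$ gives a conjugate homomorphism differing by the inner automorphism induced by $\overline{Y}Y_{1}\in C(L')$, so the isomorphism class of $C(L)\cong C(L')$ is canonical even if the isomorphism itself is not.
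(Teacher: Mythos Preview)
Your overall strategy matches the paper's: define $\xi(X)=YX\overline{Y}$ and $\xi'(X')=\overline{Y}X'Y$, check they are homomorphisms, and reduce everything to the single lemma that $\overline{Y}\cdot Y$ is link-homotopic rel boundary to the product concordance on $L$. The paper does not spell out this lemma (it has already recorded, just above, that stacking with the time-reverse gives inverses and defers the verification to \cite{M21}), so your attempt to supply it is the right instinct.

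However, your proof of that lemma is backwards. In $\overline{Y}\cdot Y$ the link sitting at the symmetry level $t=\tfrac{1}{2}$ is $L'$, not $L$: the bottom half $\overline{Y}$ runs $L\to L'$ and the top half $Y$ runs $L'\to L$. So restricting to $L\times[\tfrac12-s,\tfrac12+s]$ and rescaling produces, for each $s$, a concordance from some intermediate link $L_s$ to itself, with $L_s\to L'$ as $s\to 0$. This is not a family in $C(L)$ at all, and the limit is the product on $L'$, not on $L$; the reflection symmetry only guarantees the two ends of each stage agree with one another, not that they equal $L$. The correct null-homotopy goes the other way: set $Y_s$ to be $Y$ restricted to the domain interval $[1-s,1]$ (rescaled), and take $H_s=\overline{Y_s}\cdot Y_s$. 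Then $H_1=\overline{Y}\cdot Y$, $H_0$ is the product on $L$, and for every $s$ the boundary is $Y(\cdot,1)=L$, so this is a genuine link homotopy inside $C(L)$. Since $Y$ is an embedded concordance each $H_s$ is embedded as well, so there are no self-intersections to cancel and your paragraph about Whitney moves is unnecessary.
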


\begin{cor}[\cite{M21}]\label{commuteL}
Let $L$ be a $n$-component slice link. The group of singular concordances of $L$ up to link homotopy is non-abelian for $n\geq 3$.
\begin{proof}
By the previous proposition, the group of (singular) concordances of a slice link up to link homotopy is isomorphic to the group of annular link map. Hence, we only have to show that the group of annular link maps isn't abelian for $n\geq 3$. However, this is shown in Proposition \ref{commute}.
\end{proof}
\end{cor}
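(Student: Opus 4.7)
The plan is to chain together the two preceding results. First I would observe that since $L$ is slice, it is concordant in $B^3 \times I$ to the trivial $n$-component unlink $O_n \subset B^3$; indeed, slice disks for $L$ in $D^4$, after removing a small ball, produce such a concordance. Hence the previous proposition applies and yields a group isomorphism
\[
C(L) \;\cong\; C(O_n).
\]

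Next I would identify $C(O_n)$ with the group of $n$-component annular link maps. A singular concordance from $O_n$ to itself is, by definition, an immersion $\coprod_{i=1}^n S^1 \times I \to B^3 \times I$ which restricts to $O_n$ on each boundary component (with component labels matching), and whose only singularities are self-intersections in the interior of each component. This is exactly the definition of an $n$-component annular link map, and the stacking operation on singular concordances matches the multiplication on annular link maps. Thus $C(O_n) \cong \SL^4_{\underbrace{2,\ldots,2}_n}$ as groups.

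Combining these two isomorphisms, $C(L)$ is isomorphic to the group of $n$-component annular link maps up to link homotopy. For $n \geq 3$, Proposition \ref{commute} provides the explicit non-commuting pair $XJ\overline{X}$ and $X\overline{X}J$ (taking the first three of the $n$ components and leaving the rest trivial), distinguished by $\widetilde{\sigma}^3$ applied to their link map closures. Hence $C(L)$ is non-abelian.

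There is no serious obstacle: the content of the corollary sits entirely in Proposition \ref{commute}, which provides the non-commuting example, and in the previous proposition, which transfers it along a concordance. The only small point to verify is that the concordance from $L$ to $O_n$ coming from slice disks lies in $B^3 \times I$ rather than $D^4$, which follows by puncturing the $4$-ball at a point away from the disks and identifying the complement with $B^3 \times I$.
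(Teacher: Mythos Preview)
Your proof is correct and follows essentially the same route as the paper: use the previous proposition to identify $C(L)\cong C(O_n)=\SL^4_{2,\ldots,2}$ via a concordance from the slice link to the unlink, then invoke Proposition~\ref{commute}. You simply make explicit a few steps (why slice implies concordant to the unlink, why $C(O_n)$ coincides with annular link maps, and how to pad with trivial components for $n>3$) that the paper leaves implicit.
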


\subsection{A three-component annular link map invariant}

Both Proposition \ref{commute} and Corollary \ref{commuteL} were proven in \cite{M21} by showing that the subgroup of embedded annular link maps is non-abelian, using the longitudes of each component; whereas we studied the self-intersections of annular link maps. 

We now create an invariant of annular link maps which incorporates the longitude information but also records self-intersection information similar to our invariant for link maps.

First we specify a choice of basing path for each annular link map with three components. Take the basepoint to be $\left(s_0,0\right)\in B^3\times I$ and let $p_i$ be a point in a tubular neighbourhood of the $i$th component of the unlink $O_3$ inside $B^3\times\{0\}$. We define a meridian for the $i$th component by taking a path from $(s_0, 0)$ to $p_i$ with the curve lying only inside $B^3\times\{0\}$ and disjoint from the discs $D_i$. Finally, travel along a small loop around the $i$th component, such that the linking number is positive, and return along the path which took you to $p_i$. We call the meridians  $x$, $y$ and $z$ for the first second and third components respectively. The basing curves for the $i$th component is given by taking the same path to $p_i$ for the meridians and then travelling radially down the fibre of the normal bundle to the $i$th component.

\begin{defn}
We call such a collection of meridians and basing curves a \emph{ basing system} for a annular link map.
\end{defn}

We defined basing systems for annular link map above in terms of the unlink in the $B^3\times \{0\}$ time slice. However, if $B^3\times\{t\}\cap X=O_3\times \{t\}$ at $t\in I$  we will say there is a basing system there too, where we take the basepoint to be $\left(s_0, t\right)$ for these other basing systems. Hence, we will often talk about a \emph{basing system at $t\in I$}.

Annular link maps specify an automorphism of the lower central series quotients of the fundamental group of the complement of the unlink. To prove this we will first prove the following Lemma.

\begin{lem}\label{stringlinkmil}
Let $X$ be the image of a generic  $n$-component annular link map inside $B^3\times I$. Then $M\pi_1\left(B^3\times I\setminus X\right)$ is isomorphic to the Milnor free group on $n$-generators.
\begin{proof}
We can apply finger moves to the annular link map such that $\pi_1\left(B^3\times I\setminus X\right)=M\pi_1\left(B^3\times I\setminus X\right)$, which are normally generated by meridians of the unlink in $B^3\times\{1\}$. Using Seifert-Van Kampen we have
\[
\pi_1\left(S^3\times I\setminus X\right)=\pi_1\left(B^3\times I\setminus X\right)*\pi_1\left(B^3\times I\right)=\pi_1\left(B^3\times I\setminus X\right)\text{,}
\]
where $S^3\times I\setminus X=B^3\times I\setminus X\cup_{S^2\times I}B^3\times I$. Capping off $S^3\times\{1\}$ with $D^4$ and choosing slice discs for each component which are link homotopic to the standard embedding of discs bounded by the $n$-component unlink call these discs $\Delta$. 

This gives a collection of singular but disjoint slice discs for the unlink in $D^4$. We call this collection $\Delta^\prime$. By Seifert-Van Kampen, we have
\[
\pi_1\left(D^4\setminus \Delta^\prime\right)=\pi_1\left(S^3\times I\setminus X\right)*_{F(n)}\pi_1(D^4\setminus \Delta)\text{,}
\]
Since $\pi_1(D^4\setminus \Delta)\cong F(n)$ where the generators of $F(n)$ is the choice of meridians of the unlink we made previously. However, the amalgamation identifies the generators of $\pi_1(S^3\times I\setminus X)$ and $\pi_1(D^4\setminus \Delta)$, Hence
\[
\pi_1\left(B^3\times I\setminus X\right)\cong \pi_1\left(D^4\setminus\Delta^\prime\right)
\]
By a result in \cite{DT}, we have that $M\pi_1\left(D^4\setminus\Delta^\prime\right)\cong MF(n)$. This proves the result.
\end{proof}
\end{lem}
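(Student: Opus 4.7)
The plan is to reduce the computation of $M\pi_1(B^3 \times I \setminus X)$ to the theorem of Freedman and Teichner cited in \cite{DT}, which identifies the Milnor group of the complement of any system of singular, disjoint slice discs in $D^4$ bounded by the unlink. First, I would apply Lemma \ref{fmr} component by component---using that a self-finger move on one annulus can be chosen disjoint from the others, since $X$ is a link map---to replace $X$ by a link-homotopic annular link map for which $\pi_1(B^3 \times I \setminus X)$ already equals its own Milnor quotient. The Kirby-calculus argument of Lemma \ref{normgen}, adapted to immersed annuli in $B^3 \times I$, shows this fundamental group is normally generated by $n$ meridians $x_1,\ldots,x_n$, one for each component at $B^3 \times \{1\}$.

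Next I would bridge to $D^4$ by two Seifert--Van Kampen computations. Gluing $B^3 \times I$ to another copy of itself along $S^2 \times I$ (both simply connected) produces $S^3 \times I$ with $\pi_1(S^3 \times I \setminus X) \cong \pi_1(B^3 \times I \setminus X)$. Then I cap $S^3 \times \{0\}$ and $S^3 \times \{1\}$ with $D^4$'s, and cap each component of $X$ at both ends by a disjoint embedded collection $\Delta = \Delta_1 \sqcup \cdots \sqcup \Delta_n$ of standard slice discs. This yields a system $\Delta'$ of singular, disjoint discs in $D^4$ bounded by the unlink $O_n \subset S^3$. A further Van Kampen decomposition
\[
D^4 \setminus \nu(\Delta') = \bigl(S^3 \times I \setminus X\bigr) \cup \bigl(D^4 \setminus \nu(\Delta)\bigr),
\]
glued along the complement of the unlink in $S^3$, whose fundamental group is $F(n)$, gives
\[
\pi_1\bigl(D^4 \setminus \nu(\Delta')\bigr) \cong \pi_1(S^3 \times I \setminus X) *_{F(n)} \pi_1\bigl(D^4 \setminus \nu(\Delta)\bigr).
\]
Since $\pi_1(D^4 \setminus \nu(\Delta)) \cong F(n)$ freely on the meridians, the amalgamated product collapses onto the first factor, so $\pi_1(D^4 \setminus \nu(\Delta')) \cong \pi_1(B^3 \times I \setminus X)$.

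The Freedman--Teichner result \cite{DT} then asserts $M\pi_1(D^4 \setminus \nu(\Delta')) \cong MF(n)$. Combined with the first step, which arranged $\pi_1(B^3 \times I \setminus X)$ to coincide with its Milnor quotient, this gives $M\pi_1(B^3 \times I \setminus X) \cong MF(n)$, as required.

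The main subtlety, and the thing to verify most carefully, is the collapse of the amalgamation: one must check that the free generators of $\pi_1(D^4 \setminus \nu(\Delta)) \cong F(n)$ are identified with (conjugates of) the specified meridians $x_1, \ldots, x_n$ of $X$ at $B^3 \times \{1\}$, rather than with some twisted system such as longitudes. This holds because the capping discs $\Delta_i$ are chosen as disjoint embedded standard discs, so the meridians at the shared boundary are canonically identified with those prescribed by the basing system at $t=1$.
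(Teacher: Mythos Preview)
Your proposal is correct and follows essentially the same route as the paper: finger moves to realise the Milnor quotient, a Seifert--Van Kampen passage from $B^3\times I$ to $S^3\times I$, capping to obtain singular slice discs in $D^4$, a second Van Kampen collapsing the amalgamation onto the first factor, and the appeal to Freedman--Teichner \cite{DT}. The only slip is the phrase ``cap $S^3\times\{0\}$ and $S^3\times\{1\}$ with $D^4$'s'': the paper (and your own Van Kampen decomposition $D^4\setminus\nu(\Delta')=(S^3\times I\setminus X)\cup(D^4\setminus\nu(\Delta))$) caps only one end, so that $\Delta'$ really is a system of discs in $D^4$ rather than spheres in $S^4$.
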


\begin{lem}\label{isopath}
Let $X$ be a Annular link map. Then the inclusion maps 
\[
j_i:\left(B^3\times\{i\}\setminus O_3, \left(s_0, i\right)\right)\rightarrow \left(B^3\times I\setminus X, \left(s_0, i\right)\right),
\]
where $i=0,1$, induces an isomorphism on the Milnor groups of the fundamental groups of each space.
\begin{proof}
By Lemma \ref{stringlinkmil}, we have that the Milnor group of the complement of $X$ inside $B^3\times I$ is the Milnor free group on three-generators. It is clear that since we are sending generators to generators and both are the free Milnor object of the same number of generators we have an isomorphism.

\end{proof}
\end{lem}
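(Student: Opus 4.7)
The plan is to identify both Milnor groups with the free Milnor group on three generators $MF(3)$ and show that $(j_i)_\ast$ carries a free generating set to a free generating set.

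First I would handle the source. The complement $B^3\times\{i\}\setminus O_3$ deformation retracts onto a wedge of three circles, so $\pi_1(B^3\times\{i\}\setminus O_3,(s_0,i))\cong F(3)$, with generators given by meridians $m_1,m_2,m_3$ of the three components of $O_3$. Applying the Milnor quotient gives $M\pi_1(B^3\times\{i\}\setminus O_3,(s_0,i))\cong MF(3)$ on the same generators. For the target, Lemma \ref{stringlinkmil} states exactly that $M\pi_1(B^3\times I\setminus X,(s_0,i))\cong MF(3)$, and the isomorphism is realised by sending the generators of $MF(3)$ to the meridians $x_1,x_2,x_3$ of the three annular components $X_1,X_2,X_3$ based at $(s_0,i)$ (picked, say, at the time slice $t=i$ where $X$ meets $B^3\times\{i\}$ in the unlink $O_3$).

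Second, I would verify that $(j_i)_\ast$ sends the class of $m_k$ to the class of $x_k$ for $k=1,2,3$. This is essentially by construction: the meridian $m_k$ of the $k$th circle of $O_3$ in the time slice $B^3\times\{i\}$ is a small loop encircling $O_3^{(k)}$ once with linking number $+1$, and under inclusion into $B^3\times I\setminus X$ this same loop, based at $(s_0,i)$, is precisely a meridian of the $k$th annular component $X_k$ with the canonical basing. Thus $(j_i)_\ast$ is induced by a map $F(3)\to\pi_1(B^3\times I\setminus X)$ sending $m_k\mapsto x_k$, which after applying $M(-)$ becomes a map $MF(3)\to MF(3)$ sending each generator to the corresponding generator.

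Finally, a homomorphism $MF(3)\to MF(3)$ that sends a chosen free generating set to a free generating set is an automorphism: its inverse is obtained from the universal property of the free Milnor group applied to the map in the opposite direction. Thus $(j_i)_\ast$ is an isomorphism. The only subtle step is the second one, identifying meridians in the slice with meridians of the annular components; this is where the hypothesis that $X\cap (B^3\times\{i\})=O_3$ and the choice of basing system is used, but once the meridians of $X_k$ are defined via the canonical basing curves travelling only in $B^3\times\{i\}$, the identification is immediate.
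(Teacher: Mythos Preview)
Your proof is correct and follows the same approach as the paper: identify both Milnor groups with $MF(3)$ via Lemma~\ref{stringlinkmil} (and the obvious computation for the unlink complement), observe that the inclusion sends meridians to meridians, and conclude that a map of free Milnor groups taking a generating set to a generating set is an isomorphism. The paper's proof is simply a terser statement of exactly this argument, while you have helpfully spelled out the identification of the source with $MF(3)$ and the reason meridians in the slice become meridians of the annular components.
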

We define
\begin{align*}
\pi_{O_3}&:=\pi_1\left(B^3\times \{0\}\setminus O_3, (s_0, 0)\right),\\
\pi_{O^\prime_3}&:=\pi_1\left(B^3\times \{1\}\setminus O_3, (s_0,1 )\right),\\
\pi^i_X&:=\pi_1\left(B^3\times I\setminus X, (s_0, i)\right)\text{.}
\end{align*}
By Lemma \ref{isopath}, $\left(j_0\right)_*:M\pi_{O_3}\rightarrow M\pi^0_{C}$ and
 $\left(j_1\right)_*:M\pi_{O^\prime_3}\rightarrow M\pi^1_{C}$
are isomorphisms. Let $\alpha:[0,1]\rightarrow \left(B^3\times I\setminus X\right)$ be given by $\alpha(t)=(s_0, t)$\text{.}

Let $\psi_\alpha:\pi_X^0\rightarrow \pi_X^1$ be the isomorphism defined by $\gamma\mapsto \bar{\alpha}\cdot\gamma\cdot\alpha$. Hence, we have two isomorphisms
\begin{align*}
(j_1)^{-1}_*\circ \psi_\alpha\circ(j_0)_*:M\pi_{O_3}&\rightarrow M\pi_{O^\prime_3},\\
(j_0)^{-1}_*\circ \psi_{\alpha}^{-1}\circ(j_1)_*:M\pi_{O^\prime_3}&\rightarrow M\pi_{O_3}\text{.}
\end{align*}

A more geometric description of the content of these isomorphisms is to consider the basing system for the annular link map lying in the $t=0$ and another basing system lying in the $t=1$ slice. Let the $m_i$ and the $m_i^\prime$ be meridians for the $i$th component at each time slice $t=0$ and $t=1$ respectively. We can map
\[
m^\prime_i\mapsto (j_0)^{-1}_*\circ \psi_{\alpha}^{-1}\circ(j_1)_*(m^\prime_i)=g_i^{-1}m_ig_i\in M\pi_{O_3}
\]
for some $g_i\in M\pi_{0_3}$. This $g_i$ is represented by a longitude of the $i$th component of the annular link map which we can decompose in terms of the meridians of the basing system at $t=0$. 

\begin{defn}
Let $X$ be an annular link map with a basing system at both $t=0$ and $t=1$. Then the \emph{$i$th longitude} is a loop based at $(s_0, 0)$ and is defined by concatenating the following paths:

\begin{itemize}
    \item use the basing system at $t=0$ travel to from $(s_0, 0)$ to $(p_i, 0)$.\\
    \item Next, take a path in a regular neighbourhood of the $i$th component, which at no point lies in the fibre above a double point, and travels between $(p_i, 0)$ to $(p_i, 1)$.\\
    \item Use the basing system at $t=1$ to travel from $(p_i, 1)$ to $(s_0, 1)$.\\
    \item Finally, travel along the path  $\overline{\alpha}:\left[0,1\right]\rightarrow B^3\times I$ defined by $\overline{\alpha}(t)=(s_0, 1-t)$.
\end{itemize}
\end{defn}

Once we have fixed a choice of basing path and meridians at either end of the cylinder there are, up to homotopy, $\mathbb{Z}^2$ many choices of longitude of the $i$th component, determined by the linking number of the longitude with the $i$th component and how much ``wraps around'' the annulus. To deal with this indeterminacy we always choose longitudes with this linking number equal to zero. The indeterminacy given by wrapping round the cylinder does not affect the end result up to homotopy, as we have an unlink in the boundary.

\begin{rem}
For three-component JK constructions we will always assume that the longitudes of the $i$th component is trivial in $\Gamma_i$.
\end{rem}

Let $\Aut(MF(3))$ be the group of automorphisms of the free Milnor group on three generators, with the usual group structure given by composition of functions. Recall that $K=\mathbb{Z}\Gamma_x\times \mathbb{Z}\Gamma_y\times\mathbb{Z}\Gamma_z$. We will define a map $\Phi:\Aut(MF(3))\rightarrow \Aut(K)$ where $\Aut(K)$ is the group of $\mathbb{Z}$ linear ring automorphisms of $K$. Notice that for $\psi\in\Aut(MF(3))$ there exists a map $\psi_i:\Gamma_i\rightarrow \Gamma_i$ such that
\[
\begin{tikzcd}
MF(3)\arrow[r, "\psi"]\arrow[d]&MF(3)\arrow[d]\\
\Gamma_i\arrow[r,"\psi_i"]&\Gamma_i\text{}
\end{tikzcd}
\]
commutes for each $i\in\{x,y,z\}$. We use the same notation for these maps to denote extension to the corresponding group ring.

Evaluating $\psi$ on each of the generators we have
\begin{align*}
\psi(x)&=\tau_x^{-1}x\tau_x,\\
\psi(y)&=\tau_y^{-1}y\tau_y,\text{ and }\\
\psi(z)&=\tau_z^{-1}z\tau_z\text{.}
\end{align*}
Denote the image of $\tau_i$ in the quotient group by $\left[\tau_i\right]\in \Gamma_i$.

Given $(a,b,c)\in K$, we define $\Phi:\Aut(MF(3))\rightarrow \Aut(K)$ to be 
\[
\Phi(\psi)(a,b,c)= \left(\left[\tau_x\right]\psi_x(a)\left[\tau_x\right]^{-1}, \left[\tau_y\right]\psi_y(b)\left[\tau_y\right]^{-1}, \left[\tau_z\right]\psi_z(c)\left[\tau_z\right]^{-1}\right)\text{.}
\]

\begin{lem}
The map $\Phi:\Aut(MF(3))\rightarrow \Aut(K)$ is a group homomorphism.
\begin{proof}
Let $\alpha,\beta\in \Aut{MF(3)}$. We wish to show that that $\Phi(\beta\circ\alpha)=\Phi(\beta)\circ\Phi(\alpha)$. We write 
\[
\Phi(\alpha)(a,b,c)=\left(\left[\tau_x\right]\alpha_x(a)\left[\tau_x\right]^{-1},\left[\tau_y\right]\alpha_y(b)\left[\tau_y\right]^{-1}, \left[\tau_z\right]\alpha_z\left(c\right)\left[\tau_z\right]^{-1}\right)
\]
and 
\[
\Phi(\beta)(a,b,c)=\left(\left[\upsilon_x\right]\beta_x(a)\left[\upsilon_x\right]^{-1}, \left[\upsilon_y\right]\beta_y(b)\left[\upsilon_y\right]^{-1},\left[\upsilon_z\right]\beta_z(c)\left[\upsilon_z\right]^{-1}\right)\text{.} 
\]
We calculate 
\[
\beta\circ\alpha(x)=\beta\left(\tau_x^{-1}x\tau_x\right)=\beta(\tau^{-1}_x)\beta(x)\beta(\tau_x)=\beta(\tau_x^{-1})\upsilon_x^{-1}x\upsilon_x\beta(\tau_x),
\]
 Defining similarly on $y$ and $z$, we have
\begin{multline*}
\Phi(\beta\circ\alpha)(a,b,c)=\Big(\left[\upsilon_x\right]\beta_x(\left[\tau_x\right])\left(\beta\circ\alpha\right)_x\left(a\right)\beta_x(\left[\tau_x\right]^{-1})\left[\upsilon_x\right]^{-1},\\\left[\upsilon_y\right]\beta_y(\left[\tau_y\right])\left(\beta\circ\alpha\right)_y\left(b\right)\beta_y(\left[\tau_y\right]^{-1})\left[\upsilon_y\right]^{-1},\,
\left[\upsilon_z\right]\beta_z(\left[\tau_z\right])\left(\beta\circ\alpha\right)_z\left(a\right)\beta_z(\left[\tau_z\right]^{-1})\left[\upsilon_z\right]^{-1}\Big)\\
=\Big(\left[\upsilon_x\right]\beta_x(\left[\tau_x\right])\beta_x\left(\alpha_x\left(a\right)\right)\beta_x(\left[\tau_x\right]^{-1})\left[\upsilon_x\right]^{-1},\\\left[\upsilon_y\right]\beta_y(\left[\tau_y\right])\beta_y\left(\alpha_y\left(a\right)\right)\beta_y(\left[\tau_y\right]^{-1})\left[\upsilon_y\right]^{-1},\,
\left[\upsilon_z\right]\beta_z(\left[\tau_z\right])\beta_z\left(\alpha_z\left(a\right)\right)\beta_z(\left[\tau_z\right]^{-1})\left[\upsilon_z\right]^{-1}\Big)\\
\end{multline*}
As $(\beta\circ\alpha)_i=\beta_i\circ\alpha_i$ we have
\begin{align*}
\Phi(\beta\circ\alpha)(a,b,c)&=\Phi(\beta)\left(\left(\left[\tau_x\right]\alpha_x(a)\left[\tau_x\right]^{-1},\left[\tau_y\right]\alpha_y(b)\left[\tau_y\right]^{-1}, \left[\tau_z\right]\alpha_z\left(c\right)\left[\tau_z\right]^{-1}\right)\right)\\
&=\Phi(\beta)\circ\Phi(\alpha)\left(\left(a,b,c\right)\right)\text{,}
\end{align*}
as required. 
\end{proof}
\end{lem}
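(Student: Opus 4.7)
The plan is to verify the identity $\Phi(\beta\circ\alpha) = \Phi(\beta)\circ\Phi(\alpha)$ by a direct computation on each of the three components of a triple $(a,b,c) \in K$. Since $\Phi$ is defined componentwise and the three components are treated symmetrically, it suffices to check the identity on the first component; the other two follow mutatis mutandis.

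First I would compute the conjugating element for $\beta \circ \alpha$ on the generator $x$. From $\alpha(x) = \tau_x^{-1} x \tau_x$ and $\beta(x) = \upsilon_x^{-1} x \upsilon_x$, I get
\[
(\beta\circ\alpha)(x) = \beta(\tau_x)^{-1}\,\upsilon_x^{-1}\,x\,\upsilon_x\,\beta(\tau_x) = \bigl(\upsilon_x \beta(\tau_x)\bigr)^{-1} x \bigl(\upsilon_x\beta(\tau_x)\bigr),
\]
so the conjugating element for $\beta\circ\alpha$ is $\upsilon_x\beta(\tau_x)$, and likewise $\upsilon_y\beta(\tau_y)$ and $\upsilon_z\beta(\tau_z)$ for $y$ and $z$. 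The next observation I would record is that, by naturality of the quotient $MF(3) \to \Gamma_i$, the induced maps satisfy $(\beta\circ\alpha)_i = \beta_i \circ \alpha_i$ for each $i \in \{x,y,z\}$; this is just the fact that the commuting squares defining $\alpha_i$ and $\beta_i$ can be stacked.

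Combining these two facts, the definition of $\Phi$ gives, in the first slot,
\[
\Phi(\beta\circ\alpha)(a,b,c)_x = [\upsilon_x\beta(\tau_x)]\,\beta_x(\alpha_x(a))\,[\upsilon_x\beta(\tau_x)]^{-1}
= [\upsilon_x]\,\beta_x([\tau_x])\,\beta_x(\alpha_x(a))\,\beta_x([\tau_x]^{-1})\,[\upsilon_x]^{-1}.
\]
On the other hand,
\[
\bigl(\Phi(\beta)\circ\Phi(\alpha)\bigr)(a,b,c)_x = \Phi(\beta)\bigl([\tau_x]\alpha_x(a)[\tau_x]^{-1}\bigr)_x = [\upsilon_x]\,\beta_x\!\bigl([\tau_x]\alpha_x(a)[\tau_x]^{-1}\bigr)\,[\upsilon_x]^{-1},
\]
and since $\beta_x$ is a ring homomorphism on $\mathbb{Z}\Gamma_x$, the inner expression expands to exactly $\beta_x([\tau_x])\,\beta_x(\alpha_x(a))\,\beta_x([\tau_x]^{-1})$. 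The two expressions therefore agree. The same verification on the $y$ and $z$ slots completes the proof. Finally I would note $\Phi(\mathrm{id}) = \mathrm{id}_K$ (taking $\tau_i = 1$), so $\Phi$ also preserves inverses automatically.

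The only place where one has to be careful, and which I expect to be the main (minor) obstacle, is tracking the distinction between the lift $\tau_i \in MF(3)$ and its image $[\tau_i] \in \Gamma_i$: when we apply $\beta_x$ to the class $[\tau_x]$ we must know this equals the class of $\beta(\tau_x)$, which is precisely the commutativity of the naturality square used to define $\beta_x$. Once that identification is made, everything is a symbolic manipulation in the group rings $\mathbb{Z}\Gamma_i$.
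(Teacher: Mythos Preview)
Your proposal is correct and follows essentially the same route as the paper: compute the conjugating element for $\beta\circ\alpha$ as $\upsilon_x\beta(\tau_x)$, use the naturality $(\beta\circ\alpha)_i=\beta_i\circ\alpha_i$, and verify the identity componentwise using that $\beta_i$ is a ring homomorphism. Your additional remarks on $\Phi(\mathrm{id})=\mathrm{id}_K$ and on why $\beta_x([\tau_x])=[\beta(\tau_x)]$ (via the naturality square) are welcome clarifications but do not change the structure of the argument.
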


Define
\[
L:=K\rtimes\Aut(MF(3))
\]
where we treat K an abelian group given by addition. The group multiplication in $L$ is defined to be 
\[
\left(x_1, y_1\right)\left(x_2, y_2\right):=\left(x_1+\Phi(y_1)\left(x_2\right), y_1\circ y_2\right)\text{,}
\]
a semi-product, where $\Phi$ determines the action of $MF(3)$ on $K$.

A link homotopy invariant for annular link maps is given by the following: let $X$ be a annular link map in $B^3\times I$, let $X_x$, $X_y$ and $X_z$ be the $x$, $y$ and $z$ components respectively. Define 
\[
\Theta_x(X)=M\lambda\left(X_x, X_x\right)\in\mathbb{Z}\Gamma_x
\]
where the normal push-off which is used keeps the boundary of $X_x$ inside $\mathring{B}^3\times\{0,1\}$, and the interior of $X_x$ inside $\mathring{B}^3\times \mathring{I}$, and we use the basing system at $B^3\times \{0\}$ to compute $\lambda\left(X_x, X_y\right)$ and to specify an isomorphism to $\mathbb{Z}\Gamma_x$. We define $\Theta_y(X)$ and $\Theta_z(X)$ similarly. Let $\eta(X)$ be the element of $\Aut(MF(3))$ given by considering the isomorphisms $M\pi_1(B^3\times \{1\}\setminus O_3, (s_0, 1))\rightarrow M\pi_1(B^3\times \{0\}\setminus O_3, (s_0, 0))$ given by $X$.

\begin{defn}
Let $X$ be an annular link map. We define
\[
\Theta(X):=\left(\left(\Theta_x(X),\Theta_y\left(X\right), \Theta_z\left(X\right)\right),\eta(X)\right)\in L\text{.}
\]
\end{defn}

\begin{prop}
Let $X$ and $X^\prime$ be link homotopic annular link maps. Then
\[
\Theta\left(X\right)=\Theta\left(X^\prime\right)\text{.}
\]

\begin{proof}
The proof for the factors involving $\Theta_x$, $\Theta_y$ and $\Theta_z$ are the same as for the case of three-component link maps. We must show that the longitudes are unaffected by link homotopy. However, this is clear because the link homotopy is a concatenations of isotopy, cusps, Whitney moves and finger moves. These moves do not affect the longitude. This completes the proof.
\end{proof}
\end{prop}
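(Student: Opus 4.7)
By Proposition~\ref{FQ}, applied to annular link maps (where isotopies, finger moves, Whitney moves and cusp homotopies are all supported in $\mathring{B}^3\times\mathring{I}$ so as to preserve the boundary embedding), it suffices to show that each elementary move preserves both the intersection data $(\Theta_x,\Theta_y,\Theta_z)\in K$ and the induced automorphism $\eta(X)\in\Aut(MF(3))$. Throughout, Lemmas~\ref{stringlinkmil} and~\ref{isopath} guarantee that $M\pi_1(B^3\times I\setminus X)\cong MF(3)$ canonically via the meridians, and that the quotient by the $i$-th meridian is exactly $\Gamma_i$; so the target $L=K\rtimes\Aut(MF(3))$ does not change through the homotopy.

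For the $\Theta_i$ factors the argument is the one given in the proof of Proposition~\ref{sigma}, transferred verbatim to annular link maps. Isotopies preserve meridians and $\lambda$. A Whitney move is a regular homotopy, so it preserves $\lambda(X_i,X_i)$ as an element of $\mathbb{Z}\pi_1$, hence also in $\mathbb{Z}\Gamma_i$. A finger move on component $j\neq i$ inserts a relation of the form $[m_j,g\,m_j g^{-1}]=1$ which is already a consequence of the Milnor relations, so the image of $\lambda(X_i,X_i)$ in $\mathbb{Z}\Gamma_i$ is unaffected; a self finger move on component $i$ does not change $\lambda(X_i,X_i)$ at all, since $\lambda$ is a regular homotopy invariant. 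Finally, cusps change the Euler number by $\pm 2$; arguing exactly as in Proposition~\ref{sigma} we may assume $X$ and $X'$ have equal Euler numbers and push all finger moves into an initial sub-interval and all Whitney moves into a final sub-interval, so cusp corrections cancel and $\Theta_i$ is preserved.

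For $\eta(X)$ the key point is that, by the construction preceding the definition of $\Theta$, the automorphism sends each top-meridian $m'_i$ to $g_i^{-1}m_ig_i$ in $MF(3)$, where $g_i$ is represented by the $i$-th longitude viewed as a loop in $B^3\times I\setminus X$. So it suffices to show that the class $[g_i]\in MF(3)$ is unchanged by each elementary move. Isotopies carry the longitudes along, so this is clear. A finger move, Whitney move or cusp on a component $j\neq i$ is supported in a small ball disjoint from the regular neighbourhood of $X_i$, so by general position we may assume $g_i$ is disjoint from the support of the move and is therefore unchanged as a loop. In each case one must also check that the resulting loop represents the correct longitude of the new annular link map; this follows because longitudes are defined by linking-number-zero push-offs along the component, and the push-off on component $i$ is untouched by moves supported away from $X_i$.

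The main obstacle is the case of a self finger move on component $i$ itself: the tubular neighbourhood of $X_i$ is modified near the finger, and the old longitude must be compared with a redrawn longitude for the new link map. The plan is to realise the finger move locally as introducing a pair of self-intersections of opposite type, and to verify that the redrawn longitude differs from the original only by multiplication by elements of the form $[m_i,h\,m_ih^{-1}]$ arising from threading around the new double points; by the defining relations of $MF(3)$ these vanish in the Milnor group, so $[g_i]\in MF(3)$ is unchanged. Combining the three paragraphs gives $\Theta(X)=\Theta(X')$.
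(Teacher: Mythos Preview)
Your proposal is correct and follows essentially the same strategy as the paper: reduce via Proposition~\ref{FQ} to elementary moves, invoke the argument of Proposition~\ref{sigma} for the $\Theta_i$ factors, and check that longitudes are unchanged in $MF(3)$ under each move. The paper's own proof is a terse two-line version of this; your treatment is more careful, in particular in singling out the self-finger-move on component~$i$ as the one case where the longitude genuinely needs to be redrawn and arguing that the discrepancy lies in the Milnor relations.
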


\begin{theo}
The map 
\[
\Theta:\SL_{2,2,2}^4\rightarrow L,
\]
is a group homomorphism.
\begin{proof}
We must show that
\begin{multline*}
\Theta(X\cdot X^\prime)=\Big(\left(\Theta_x(X), \Theta_y(X), \Theta_z(X)\right)+\Phi(\eta(X))\left(\Theta_x(X^\prime), \Theta_y(X^\prime), \Theta_z(X^\prime)\right), \eta(X)\circ\eta(X^\prime)\Big)\text{.}
\end{multline*}

We will first show that the stacking operation on annular link maps corresponds to composition in $\Aut(MF(3))$. Let $g\in\pi_1(B^3\times \{1\}\setminus O_3)\cong MF(3)$ and let $\alpha_{0}, \alpha_{\frac{1}{2}}:I\rightarrow B^3\times I\setminus \left(X\cdot X^\prime\right)$
where 
\[
\alpha_0(t)=\left(s_0, \frac{t}{2}\right)
\]
and 
\[
\alpha_{\frac{1}{2}}(t)=\left(s_0,\frac{1+t}{2}\right).
\]
Rebase $g$ to get $\alpha_{\frac{1}{2}}\cdot g\cdot \overline{\alpha}_{\frac{1}{2}}$ and think of it as an element of $\pi_1\left(B^3\times \left[\frac{1}{2}, 1\right]\setminus X^\prime, \left(s_0, \frac{1}{2}\right)\right)$. We then pull this back using the induced map on the Milnor group of the fundamental groups given by $B^3\times\{\frac{1}{2}\}\setminus O_3 \hookrightarrow B^3\times I\setminus X$. This gives us an element of $MF(3)\cong M\pi_1\left(B^3\times\{\frac{1}{2}\}, \left(s_0, \frac{1}{2}\right)\right)$ and equal to $\eta(X^\prime)(g)$. We follow a similar argument rebasing using $\alpha_0$ so that the induced map from the stacking is $\eta(X)\circ \eta(X^\prime)$, as required.

We need to check the effect of stacking on the self-intersection information. We may assume that the intersections on $X$ and $X^\prime$ have cancelling sign. The intersection on the $X$ region is not affected since those intersections will use the same meridians and basing paths to decompose the group elements and thus their contributions are not affected. We must check that the values of the intersections in the $X^\prime$ region. First notice that the $\Theta_x(X^\prime), \Theta_y(X^\prime), \Theta_z(X^\prime)$ are equal to the sum of the intersections on the $X^\prime$ region using the basing system at $B^3\times \{\frac{1}{2}\}$. We now must rebase and describe the group elements using the basing system at $B^3\times \{0\}$. To calculate $\Theta_x(X^\prime)$ intersections using the meridians of the basing system at $B^3\times \{0\}$ we apply the automorphism on $\Gamma_x$ induced by $\eta(X)$. We then must take into account our choice of basing path. However, this is just conjugation by the longitude of the $x$ component of $X$ as required. The proofs for the $y$ and $z$ components are analogous.
\end{proof}
\end{theo}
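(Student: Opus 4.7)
The plan is to verify both coordinates of $\Theta(X\cdot X')$ separately, showing they match the components of the semidirect product $\Theta(X)\cdot_L \Theta(X')$ as defined. Since $L = K\rtimes \Aut(MF(3))$ with multiplication $(x_1,y_1)(x_2,y_2)=(x_1+\Phi(y_1)(x_2),\,y_1\circ y_2)$, I need to prove both
\[
\eta(X\cdot X') = \eta(X)\circ \eta(X'),
\]
and, for each $i\in\{x,y,z\}$,
\[
\Theta_i(X\cdot X') = \Theta_i(X) + [\tau_i]\,\eta(X)_i(\Theta_i(X'))\,[\tau_i]^{-1},
\]
where $[\tau_i]$ is the image in $\Gamma_i$ of the $i$th longitude of $X$.

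First I would handle the automorphism coordinate. Using the paths $\alpha_0(t)=(s_0,t/2)$ and $\alpha_{1/2}(t)=(s_0,(1+t)/2)$ inside $B^3\times I\setminus (X\cdot X')$, I take a meridian $g$ of the $t=1$ boundary and rebase it to $(s_0,1/2)$ via $\alpha_{1/2}$, interpret this rebased loop inside $\pi_1(B^3\times[\tfrac12,1]\setminus X',(s_0,\tfrac12))$, and pull it back through the inclusion into the Milnor group of $B^3\times\{\tfrac12\}\setminus O_3$, which by Lemma \ref{isopath} is exactly $MF(3)$; this step yields $\eta(X')(g)$. Repeating the same procedure on the lower half using $\alpha_0$ and $X$ produces $\eta(X)\bigl(\eta(X')(g)\bigr)$, and by construction the composite is $\eta(X\cdot X')(g)$.

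Next I would analyze the intersection coordinates. Since $X\cdot X'$ is defined by stacking, every double point lies either in the lower half $B^3\times[0,\tfrac12]$, where it comes from $X$, or in the upper half, where it comes from $X'$; cusp and Whitney cancellations among intersections with opposite sign can be arranged not to cross the $t=\tfrac12$ slice, so the self-intersection sum splits as a disjoint sum over these two regions. The lower-half intersections, evaluated using the basing system at $t=0$, contribute precisely $\Theta_i(X)$. For the upper-half intersections the group element associated to a double point is naturally expressed using the basing system at $t=\tfrac12$, giving $\Theta_i(X')\in \mathbb{Z}\Gamma_i$; to re-express this in the $t=0$ basing system I must translate the upper meridians $m_i'$ back to expressions in the lower meridians $m_i$, and also precompose the basing paths. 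The meridian translation is the induced map $\eta(X)_i$ on $\Gamma_i$ (its well-definedness is exactly what makes $\Phi$ into a homomorphism $\Aut(MF(3))\to \Aut(K)$), while the basing-path correction is conjugation by the $i$th longitude $[\tau_i]$ of $X$, because the rebasing loop needed to move from a meridian based at $(p_i,\tfrac12)$ to one based at $(p_i,0)$ is, up to homotopy in the complement, the longitude. Combining these two effects is, by construction, exactly the action $\Phi(\eta(X))_i$ on $\mathbb{Z}\Gamma_i$.

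The main obstacle I expect is the intersection-rebasing step: one must verify that the longitude of $X$ plays the role of the basing-path correction, and that this conjugation is well defined modulo the Milnor relations. This requires showing that the rebasing path inside $B^3\times I\setminus (X\cdot X')$ from $(s_0,0)$ to the $t=\tfrac12$ basepoint, traveling alongside the $i$th strand of $X$, projects to $[\tau_i]$ in $\Gamma_i$, and invoking link-homotopy invariance of longitudes (which follows from Proposition \ref{FQ}: none of the elementary moves in a link homotopy alter the longitude class in the Milnor group). Once this is checked, both coordinate equations are in place, and the identity $\Theta(X\cdot X')=\Theta(X)\cdot_L\Theta(X')$ follows directly from the definition of multiplication in $L$.
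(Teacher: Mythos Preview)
Your proposal is correct and follows essentially the same route as the paper: you verify the $\Aut(MF(3))$ coordinate via the rebasing paths $\alpha_0$ and $\alpha_{1/2}$ exactly as the paper does, and you handle the intersection coordinate by splitting into the $X$ and $X'$ regions and identifying the rebasing from $t=\tfrac12$ to $t=0$ as the combination of the induced automorphism $\eta(X)_i$ on $\Gamma_i$ together with conjugation by the $i$th longitude $[\tau_i]$, which is precisely $\Phi(\eta(X))$. Your discussion of the ``main obstacle'' is in fact slightly more explicit than the paper's own justification of why the longitude gives the basing-path correction.
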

From the construction of $\Theta$ we have the following result.
\begin{theo}
The following diagram commutes
\begin{equation}\label{invcom}
\begin{tikzcd}
\SL_{2,2,2}^4\arrow[r, twoheadrightarrow]\arrow[d, "\Theta"]&\LM_{{2,2,2}}^4\arrow[r, twoheadrightarrow, "i"]\arrow[d, "\widetilde{\sigma}^3"]&\LM_{2,2}^4\arrow[d,"\sigma"]\\
L\arrow[r]&\widetilde{K}\arrow[r, "p_i"]&\left(\mathbb{Z}\left[\mathbb{Z}\right]\right)^2\text{.}
\end{tikzcd}
\end{equation}
\end{theo}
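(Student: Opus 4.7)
The plan is to verify commutativity of each square of the diagram separately. The right-hand square is exactly the content of Lemma \ref{containment}, which has already been proved, so the task reduces to the left-hand square involving $\Theta$ and the link-map closure. First I will identify the unlabelled bottom arrow $L \to \widetilde{K}$ as the composition of the canonical projection $L = K \rtimes \Aut(MF(3)) \twoheadrightarrow K$ onto the $K$-factor with the orbit quotient $K \twoheadrightarrow \widetilde{K}$. This is the natural candidate: $\widetilde{K}$ is obtained from $K$ by quotienting out the $\mathbb{Z}^6$-action recording changes in basing paths, while the $\Aut(MF(3))$-factor of $L$ records information invisible to the closure, which has no distinguished top and bottom boundaries.

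Now fix $X \in \SL_{2,2,2}^4$ and consider its link-map closure $f_X$. It is obtained by doubling $B^3 \times I$ along $S^2 \times I$ to form $S^3 \times I$, capping the ends with copies of $D^4$, and filling each $D^4$ with a system of slice discs link-homotopic to the standard disjoint discs bounded by the unlink. I would choose the capping discs to be smoothly embedded and pairwise disjoint, which is possible since they are link-homotopic to the standard system. With this choice, the double points of $f_X$ coincide, with signs and component information, with those of $X$. Using the basing system of $X$ at $t=0$ extended radially through the bottom cap to a chosen basepoint of $S^4$, I obtain meridians and basing paths for $f_X$, and I claim that with respect to these choices
\[
\sigma^3(f_X) = \bigl(\Theta_x(X),\, \Theta_y(X),\, \Theta_z(X)\bigr) \in K.
\]
The key input is that for each pair $\{j,k\}\subset\{x,y,z\}$ the inclusion $(B^3 \times I) \setminus (X_j \cup X_k) \hookrightarrow S^4 \setminus ((f_X)_j \cup (f_X)_k)$ induces an isomorphism on Milnor groups of fundamental groups. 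This follows from a Seifert--van Kampen computation analogous to the proof of Lemma \ref{stringlinkmil}, using that the caps are embedded; the induced isomorphism sends meridians to meridians and $\Gamma_i$ to $\Gamma_i$ canonically. Hence the group element read off a double point in $B^3 \times I$ and the one read off the same double point in $S^4$ agree in $\Gamma_i$, proving the displayed equation. Passing to $\widetilde{K}$ gives commutativity of the left square.

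The principal obstacle I expect is showing that the closure map $\SL_{2,2,2}^4 \to \LM_{2,2,2}^4$ is well-defined on link-homotopy classes, i.e.\ that the link-homotopy class of $f_X$ is independent of the choice of capping slice discs and of a link-homotopy representative of $X$. Independence of the caps follows because any two slice-disc systems link-homotopic to the standard one are connected by a link homotopy supported in a collar of the $D^4$-boundaries, and this extends by the identity to a link homotopy of $f_X$. Independence of the class of $X$ is immediate from extending any link homotopy of $X$ relative to the boundary identically over the two caps. Once this is in place, Proposition \ref{sigma} ensures $\widetilde{\sigma}^3(f_X)$ is well-defined, and combining this with Lemma \ref{containment} yields commutativity of the full diagram.
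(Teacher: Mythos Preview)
Your proposal is correct and essentially fleshes out what the paper leaves implicit: the paper offers no proof at all beyond the sentence ``From the construction of $\Theta$ we have the following result,'' so your argument is a careful verification of exactly the identifications the author takes for granted. Your identification of the bottom arrow $L\to\widetilde{K}$ as projection onto $K$ followed by the orbit quotient, and your observation that the basing system at $t=0$ extends through the embedded standard caps to give a basing of $f_X$ under which $\sigma^3(f_X)=(\Theta_x(X),\Theta_y(X),\Theta_z(X))$, are precisely the content needed; the paper later uses this diagram in the proof of Theorem~\ref{main1} in just this way.
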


\begin{prop}
Let $X$ be an embedded three-component annular link map such that $\Theta(X)=(a, \phi)$ where $a\neq 0$. Then $X$ is not link homotopic to a topologically flat embedding.
\end{prop}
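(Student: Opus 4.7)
The plan is a short contradiction argument that combines the link homotopy invariance of $\Theta$ established in the preceding proposition with the observation that the $K$-coordinate of $\Theta$ vanishes on any topologically flat embedded annular link map.

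First I would verify that if $Y = Y_x \sqcup Y_y \sqcup Y_z$ is a topologically flat embedded three-component annular link map, then $\Theta(Y) = (0, \eta(Y))$, i.e.\ its $K$-factor is zero. Each component $Y_i$ is a locally flat embedding of $S^1 \times I$ into the interior of $B^3 \times I$ with boundary on the standard unlink $O_3 \times \{0,1\}$. Such an embedding has a tubular neighbourhood and hence a well-defined oriented normal $2$-disc bundle, classified by its Euler class in $H^2(S^1 \times I;\mathbb{Z}) = 0$. The bundle is therefore trivial, so a small normal push-off $Y_i^+$ can be chosen disjoint from $Y_i$. Being embedded, $Y_i$ has no double points and is also disjoint from the other components, so $\lambda(Y_i, Y_i) = 0$ in the appropriate group ring; consequently $\Theta_x(Y) = \Theta_y(Y) = \Theta_z(Y) = 0$.

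With this in hand, the contradiction step is immediate. Suppose for contradiction that $X$ is link homotopic to some flat embedded annular link map $Y$. Since $\Theta$ is a link homotopy invariant,
\[
(a, \phi) \; = \; \Theta(X) \; = \; \Theta(Y) \; = \; (0, \eta(Y)),
\]
which forces $a = 0$ and contradicts the hypothesis $a \neq 0$.

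The main technical point is ensuring that a locally flat codimension-$2$ embedding of $S^1 \times I$ into a $4$-manifold admits an honest $D^2$-bundle neighbourhood, so that the Euler-class argument above is available in the topological category; this is standard for locally flat codimension-$2$ embeddings. With that granted, the rest of the argument is purely formal and uses only link homotopy invariance of $\Theta$ together with the cohomological triviality of rank-$2$ oriented bundles over $S^1 \times I$.
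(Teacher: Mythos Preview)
Your argument is correct; the paper in fact states this proposition without proof, treating it as an immediate consequence of the link homotopy invariance of $\Theta$ together with the vanishing of the self-intersection data $\Theta_i$ on embeddings. Your write-up spells out precisely this implicit argument, including the one point that deserves mention (existence and triviality of the normal $D^2$-bundle for a locally flat annulus in a $4$-manifold, so that a disjoint push-off exists and $\lambda(Y_i,Y_i)=0$).
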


\subsection{Describing link maps}
We now wish to construct link maps by giving a description of a annular link maps and then calculating $\Theta$ to determine the values of $\sigma_3$ and $\tilde{\sigma}_3$ after taking the closure.  

\begin{defn}
We call an embedded annular link a \emph{ribbon braid} if it is the trace of an isotopy in $B^3$, where for each time slice we have an unlink with each component lying in a $B^3\cap \mathbb{R}^2\times\{q\}$ for some $q\in\mathbb{R}^2$.
\end{defn}
\begin{rem}
This is different from how ribbon braids are defined in \cite{A}, where the components lying in a plane parallel to each other condition is loosened.
\end{rem}

Let $p_1,\ldots, p_n$ be pairwise distinct elements in $\mathring{I}$. Let $f:\coprod_{i=1}^nI_i\hookrightarrow I\times I$ with $I_i=\left[0,1\right]$ and $f(j)=\{p_i\}\times\{j\}$ with $j\in \partial I_i$ be a topologically flat, transverse immersion such that $f(i)=(x, i)$ for some $x\in I$. For all double points $p$, place a partial ordering on each elements in the pre-image. If we can say that one element in the pre-image is less than or greater than the other - using the partial ordering - then we delete a small neighbourhood of the lower point, if we cannot this is called a \emph{welded crossing} and we place a circle around this crossing. We call this a \emph{virtual braid diagram}. 

\begin{defn}
We call the set of virtual braid diagrams, up to standard Reidmeister moves; virtual Reidemeister moves described in Figures \ref{fig:VR1}, \ref{fig:VR2}, \ref{fig:VR3}, \ref{fig:VR4}; and the over crossing move shown in Figure \ref{fig:overcommute}, \emph{welded braids}. 
\end{defn}

\begin{figure}
    \centering
    \includegraphics{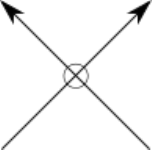}
    \caption{A welded crossing}
    \label{fig:weldedcross}
\end{figure}
\begin{figure}
    \centering
    \includegraphics{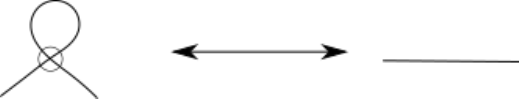}
    \caption{The first virtual Reidemeister move.}
    \label{fig:VR1}
\end{figure}
\begin{figure}
    \centering
    \includegraphics{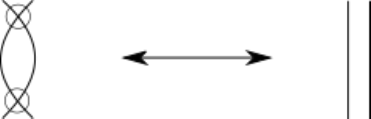}
    \caption{The second virtual Reidemeister move.}
    \label{fig:VR2}
\end{figure}

\begin{figure}
    \centering
    \includegraphics{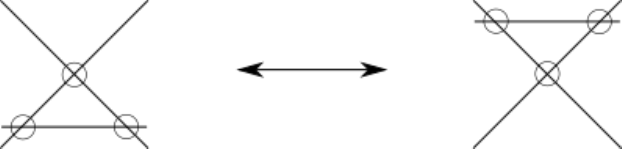}
    \caption{The third virtual Reidemeister move.}
    \label{fig:VR3}
\end{figure}

\begin{figure}
    \centering
    \includegraphics{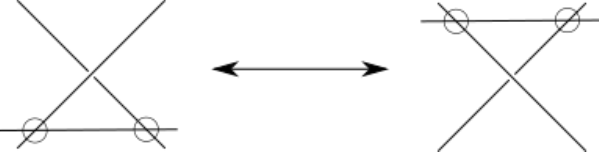}
    \caption{The fourth virtual Reidemeister move.}
    \label{fig:VR4}
\end{figure}
\begin{figure}
    \centering
    \includegraphics{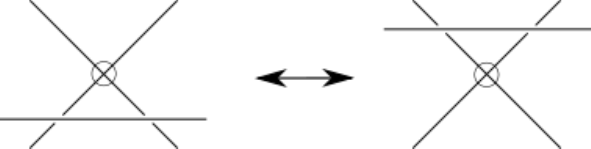}
    \caption{Overcrossing on welded braids.}
    \label{fig:overcommute}
\end{figure}

We can stack welded braids together to produce new welded braids. This gives a group structure on the set of welded braids. Denote this group $\mathfrak{W}$. There is a well defined map from $\mathfrak{W}$ to ribbon braids called the tube map which is discussed in \cite{A} and due to the results of Brendle and Hatcher \cite{HB} this is an isomorphism. Hence we can represent ribbon braids by a welded braid diagram.

\begin{defn}
Let $L$ be a welded braid, where each strand is oriented from $I\times \{0\}$ to $I\times\{1\}$, an \emph{overstrand} is a piece of arc with each endpoint being either the boundary or the underpass of a classical crossing, with no classical crossing in the interior of this piece of the arc. Let $O(L)$ be the set of overstrands of $L$ and let $C(L)$ be the set of classical crossings of $L$. For $c\in C(L)$ we define the following:
\begin{itemize}
    \item $\epsilon_c$ is the sign of the crossing at $c$
    \item $s^0_c$ is the overstrand containing the highest pre-image at the crossing,
    \item $s_c^-$ is the overstrand whose boundary element is the lowest preimage of $c$ and the orientation of the strand point into the crossing,
    \item $s_c^+$ is the overstrand whose boundary element is the lowest preimage of $c$ and the orientation of the strand point out of the crossing.
\end{itemize}
The fundamental group of $L$ is defined to be
\[
\pi_1\left(L\right):=\left<O\left(L\right)\mid s^+_c=\left(s^-_c\right)^{\left(s^0_c\right)^{\epsilon_c}} \forall c\in C(L)\right>\text{.}
\]
\end{defn}

 Brendle and Hatcher, in \cite{HB}, showed that the tube map provides the following isomorphism
\[
\pi_1\left(L\right)\cong\pi_1\left(B^3\times I\setminus \text{Tube}(L)\right).
\]

In fact, there is a one-to-one correspondence between the generators and relations, coming from classical crossing of the welded braid and Wirtinger presentation given by a broken surface diagram for the image of $L$ under the tube map \footnote{A comprehensive account of broken surface diagrams can be found in \cite{A}.}. Thus, we can use welded braids to compute longitudes of elements and compute the isomorphisms of $\Aut(MF(3)$ induced by a ribbon braid, which makes constructing examples easier. It follows that we can use these diagrams to specify any automorphisms of $MF(3)$ using results from \cite{A}.

\begin{exam}
Consider the welded braid as in Figure \ref{fig: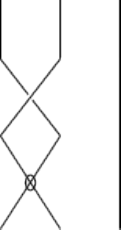}. Then the longitude of the $x$-component is overstrand of the $y$ component. Hence the element $\psi\in~\Aut(MF(3))$ described by this welded braid is given by
\[
\psi(x)=y^{-1}xy,
\]
where $\psi(y)=y$ and $\psi(z)=z$.
\end{exam}

\begin{defn}
Let $f\in \LM_{2,2}^4$ and $g\in F/F_3$ then we define  $\sigma_i^g(f)$ to be the Laurent polynomial $\sigma_i(f)$ evaluated on $g$.
\end{defn}

\begin{defn}
Let $X$ be a three-component annular link map and $i\in\{1,2,3\}$. Then $X^i$ is a two-component annular link map which is $X$ with the $i$th component removed.
\end{defn}

\begin{figure}
 \includegraphics[scale=0.75]{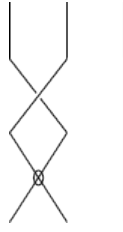}

\caption[A welded braid]{A welded braid description of the ribbon braid described in \ref{fig:PB}, where the $x$-component is starts and ends on the left, the $y$-component starts and ends in the centre, and the $z$-component is on the right.}
\label{fig:easycon.pdf}
\end{figure}

 Let $X:=X_1J_1X_2\ldots J_{n-1}X_n$ be a stacking of JK constructions. Furthermore, we define:
\begin{itemize}
    \item Let $k_{i}$ and $k_{i}^\prime$ be the difference of the number of positive and negative classical crossing of a longitude of the $x$, with the $y$ and $z$ components respectively, for the Welded Braid which is mapped to the  substack $J_1\ldots J_{i-1}$ under the tube map, when $i>1$. We set $k_1=k^\prime_1=0$.
    \item Let $l_{i}$ and $l_{i}^\prime$ be the difference of the number of positive and negative classical crossing of a longitude of the $y$ component, with the $z$ and $x$ components respectively, for the welded braid which is mapped to the substack $J_1\ldots J_{i-1}$ under the Tube map, when $i>1$. We set $l_1=l^\prime_1=0$.
    \item Let $m_{i}$ and $m_{i}^\prime$ be the difference of the number of positive and negative classical crossing of a longitude of the $z$ component, with the $x$ and $y$ components respectively, for the welded braid which is mapped to the substack $J_1\ldots J_{i-1}$ under the tube map, when $i>1$. We set $m_1=m^\prime_1=0$.
\end{itemize}
\begin{theo}\label{main1}
Let $f_X$ be a link map arising from the stacking $X$. Then, for some choice of basing path, we have
\begin{align*}
\sigma_x\left(f_X\right)&=\sum_{i=1}^nz^{k^\prime_i-l_i}\sigma_1^y\left(f_{X^{3}_i}\right)z^{-k^\prime_i+l_i}+y^{k_i-m^\prime_i}\sigma_1^z\left(f_{X^{2}_i}\right)y^{-k_i+m^\prime_i}\\ \sigma_y(f_X)&=\sum_{i=1}^nz^{l_i-k^\prime_i}\sigma_2^x\left(f_{X_i^3}\right)z^{-l_i+k^\prime_i}+x^{l^\prime_i-m_i}\sigma_1^z\left(f_{X_i^1}\right)x^{-l^\prime_i+m_i}\\
\sigma_z\left(f_X\right)&=\sum_{i=1}^ny^{m^\prime_i-k_i}\sigma_2^x\left(f_{X^{2}_i}\right)y^{-m^\prime_i+k_i}+x^{m_i-l_i^\prime}\sigma_2^y\left(f_{X^{1}_i}\right)x^{-m_i+l_i^\prime}.
\end{align*}

\begin{proof}
In this proof, we will use $\mu$ for our calculation instead of $\lambda$. Hence, we will assume that the sum of the signed intersections are zero.

 Recall that $\eta(X_i)=\Id$ as $X_i$ is a three-component JK construction and for each $J_i$ we have $\theta_p(J_i)=0$ for $p\in\{x,y,z\}$. We first calculate $\Theta$ and then using commutative diagram \eqref{invcom} we will map to the image of the link map closure under $\sigma^3$.  As $\Theta$ is a homomorphism 
\begin{multline*}
\Theta(X)=\Big(\left(\theta_x(X_1),\theta_y(X_1),\theta_z(X_1)\right)\\+\sum_{i=2}^n\Phi\left(\eta\left(\Pi_{j=1}^{i-1}J_j\right)\right)\left(\theta_x(X_i),\theta_y(X_i),\theta_z(X_i)\right),
\eta(J_1\ldots J_{n-1})\Big)\text{.}
\end{multline*}
 
 We now calculate $\Theta_x(X_i)$, the calculations for $\theta_y(X_i)$ and $\theta_z(X_i)$ are similar. As $X_i$ is a three-component JK construction there are three cases depending on which component is embedded. In the case where the $x$ component is embedded we know that $\theta_x(X_i)=0$ and thus $\sigma_1\Big(f_{X_i^3}\Big)=0$ and $\sigma_1\Big(f_{X_i^2}\Big)=0$. Hence, the claim is true in this first case. The next case to consider is to suppose $x$ has self-intersections and $y$ is the embedded component, split from the other two components. For each intersection on the $x$ component we associate an element of $\Gamma_x$. Since the $y$ component is split from the rest of the components, the group elements of the intersections of the $x$ components are powers of the $z$ meridian coming from the basing system at $B^3\times \{\frac{1}{2^{1+2n-2i}}\}$. Taking the sum over all the intersections of the $x$ component and considering their values inside $\Gamma_x/A_y$ and we have $\sigma_1(f_{X_i^2})$. Thus $\theta_x(X_i)=\sigma^z_1(f_{X_i^2})$. Since $\sigma_1^y(f_{X_i^3})=0$ we have
 \[
 \Theta_x(X_i)=\sigma^z_1(f_{X_i^2})+\sigma_1^y(f_{X_i^3})\text{.}
 \]
 A similar argument shows that this equation holds in the remaining case. We now show that 
\begin{multline*}
 \Phi(\eta(J_1\ldots J_{i-1}))\left(\left(\theta_x(X_i),\theta_y\left(X_i\right), \theta_z(X_i)\right)\right)\\
 =\Big(z^{k^\prime_i-l_i}\sigma_1^y\left(f_{X^{3}_i}\right)z^{-k^\prime_i+l_i}+y^{k_i-m^\prime_i}\sigma_1^z\left(f_{X^{2}_i}\right)y^{-k_i+m^\prime_i},\\ z^{l_i-k^\prime_i}\sigma_2^x\left(f_{X_i^3}\right)z^{-l_i+k^\prime_i}+x^{l^\prime_i-m_i}\sigma_1^z\left(f_{X_i^1}\right)x^{-l^\prime_i+m_i}, \\y^{m^\prime_i-k_i}\sigma_2^x\left(f_{X^{2}_i}\right)y^{-m^\prime_i+k_i}+x^{m_i-l_i^\prime}\sigma_2^y\left(f_{X^{1}_i}\right)x^{-m_i+l_i^\prime}\Big)\text{.}
\end{multline*}
Since $\Phi\left(\eta(J_1\ldots J_{i-1})\right)$ can be determined by $\tau_x^{i}\in\Gamma_x$, $\tau_y^i\in\Gamma_y$, $\tau_z^i\in\Gamma_z$, the longitudes of the $x$,$y$ and $z$ longitudes of $J_1\ldots J_{i-1}$. We know that as the longitude of a component welded braid which represents the $J_1\ldots J_{i-1}$ is equivalent to the longitude of the corresponding component of $J_1\ldots J_{i-1}$. Given $\tau_x=y^{k_i}z^{k_i^\prime}s^{p^x_i}$, $\tau_y=z^{l_i}x^{l_i^\prime}t^{p_i^y}$ and $\tau_z=x^{m_i}y^{m_i^\prime}u^{p_i^z}$, where $p_i^x, p_i^y, p_i^z\in\mathbb{Z}$. Thus evaluating $\Phi\left(\eta(J_1\ldots J_n)\right)$ on $\theta_x(X_i)$ gives
\begin{align*}
\tau_x\sigma_1^{z^{-l_i}yz^{l_i}}(f_{X_i^3})\tau^{-1}_x+\tau_x\sigma_1^{z^{-m_i\prime}yz^{m_i^\prime}} \left(f_{X_i^2}\right)\tau_x^{-1}
&=z^{k^\prime_i}\sigma_1^{z^{-l_i}yz^{l_i}}(f_{X_i^3})z^{-k^\prime_i}+y^{k_i}\sigma_1^{z^{-m_i\prime}yz^{m_i^\prime}} \left(f_{X_i^2}\right)y^{-k_i}\\
&=z^{k^\prime_i-l_i}\sigma_1^{y}\left(f_{X_i^3}\right)z^{-k_i^\prime+l_i}+y^{k_i-m_i^\prime}\sigma_1^{y}\left(f_{X_i^3}\right)z^{-k_i+m_i^\prime}
\end{align*}
A similar result shows the required effect on $\theta_y(X_i)$ and $\theta_z(X_i)$. Using commutative diagram \eqref{invcom}, we have the result.
\end{proof}
\end{theo}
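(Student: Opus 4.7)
The plan is to use the group homomorphism property of $\Theta:\SL_{2,2,2}^4\to L$ together with the commutative diagram \eqref{invcom}, which sends the first coordinate of $\Theta(X)$ to $\widetilde{\sigma}^3(f_X)$. First I would decompose $\Theta(X)=\Theta(X_1)\Theta(J_1)\cdots\Theta(J_{n-1})\Theta(X_n)$. Each ribbon braid $J_j$ is embedded, so $\Theta(J_j)=(0,\eta(J_j))$; each JK piece $X_i$ has $\eta(X_i)=\mathrm{Id}$ by the convention that the longitudes of a three-component JK construction are trivial in each $\Gamma_i$, so $\Theta(X_i)=(\theta_i,\mathrm{Id})$ with $\theta_i=(\Theta_x(X_i),\Theta_y(X_i),\Theta_z(X_i))$. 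Unfolding the semidirect product multiplication yields
\[
\Theta(X)=\bigg(\sum_{i=1}^n \Phi\big(\eta(J_1\cdots J_{i-1})\big)(\theta_i),\ \eta(J_1\cdots J_{n-1})\bigg).
\]

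Next I would identify each $\theta_i$ with Kirk invariants of two-component sublinks of $f_{X_i}$. Because a three-component JK construction has one component embedded and split from the other two, the self-intersection loops on a non-embedded component can be read in a quotient killing the embedded meridian. This yields $\Theta_x(X_i)=\sigma_1^z(f_{X_i^2})+\sigma_1^y(f_{X_i^3})$, where exactly one summand is nonzero (depending on which component is embedded), and the analogous expressions for $\Theta_y(X_i)$ and $\Theta_z(X_i)$ follow by symmetry.

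The central computation is the action of $\Phi(\eta(J_1\cdots J_{i-1}))$ on $\theta_i$. Via the Brendle--Hatcher tube map one reads off the longitudes directly from the welded braid diagram: their images in $\Gamma_x,\Gamma_y,\Gamma_z$ take the form $[\tau_x]=y^{k_i}z^{k_i'}s^{p_i^x}$, $[\tau_y]=z^{l_i}x^{l_i'}t^{p_i^y}$, and $[\tau_z]=x^{m_i}y^{m_i'}u^{p_i^z}$, the non-commutator exponents being signed counts of classical crossings of the relevant overstrand with the other strands. Since each summand of $\Theta_x(X_i)$ is a Laurent polynomial in a single non-commutator meridian (e.g.\ $y$ for $\sigma_1^y(f_{X_i^3})$), the composite of the automorphism $\eta_x$ (which conjugates $y$ by $[\tau_y]=z^{l_i}$ in $\Gamma_x$, as $x$ and $t$ die there) with conjugation by $[\tau_x]$ simplifies: the central $s^{p_i^x}$ drops out, and the $y^{k_i}$ conjugating a $y$-polynomial is absorbed, with the $s$-commutator corrections coming from $[y,z]=s$ cancelling between the left and right sides. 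This leaves exactly the pure meridian conjugation $z^{k_i'-l_i}\sigma_1^y(f_{X_i^3})z^{l_i-k_i'}$, and the mirror computation for the $z$-polynomial summand produces $y^{k_i-m_i'}\sigma_1^z(f_{X_i^2})y^{m_i'-k_i}$.

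Summing over $i$ and projecting via \eqref{invcom} yields the stated formulas for $\sigma_x(f_X)$; the formulas for $\sigma_y(f_X)$ and $\sigma_z(f_X)$ are obtained by cycling the roles of the three components. The main obstacle is the third step: the commutator-cancellation when conjugators $[\tau_x]$ and $[\tau_x]^{-1}$, involving both non-central generators $y$ and $z$ of the non-abelian $\Gamma_x\cong F/F_3$, sandwich a polynomial in only one of those generators. The cancellation succeeds precisely because each summand depends on a single meridian, so that the $s^{ab}$ factors produced when moving $y^a$ past $z^b$ on the left are matched by the opposite $s^{-ab}$ factors on the right. A secondary subtlety is verifying that the longitude conventions are consistent across the stacking, which is ensured by insisting on vanishing linking numbers for all chosen longitudes.
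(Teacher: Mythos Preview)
Your proposal is correct and follows essentially the same route as the paper: decompose $\Theta(X)$ via the homomorphism property using $\Theta(J_j)=(0,\eta(J_j))$ and $\Theta(X_i)=(\theta_i,\mathrm{Id})$, identify each $\Theta_x(X_i)$ with the relevant Kirk invariants of two-component sublinks, read off the longitudes $[\tau_x]=y^{k_i}z^{k_i'}s^{p_i^x}$ etc.\ from the welded braid diagram, and then carry out the conjugation in $\Gamma_x$ before invoking diagram~\eqref{invcom}. Your discussion of the commutator cancellation in the conjugation step is in fact more explicit than the paper's own treatment, which performs that simplification tersely.
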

 \begin{figure}
\begin{tikzpicture}
\node [
    above right,
    inner sep=0] (image) at (0,0) {\includegraphics[scale=0.75]{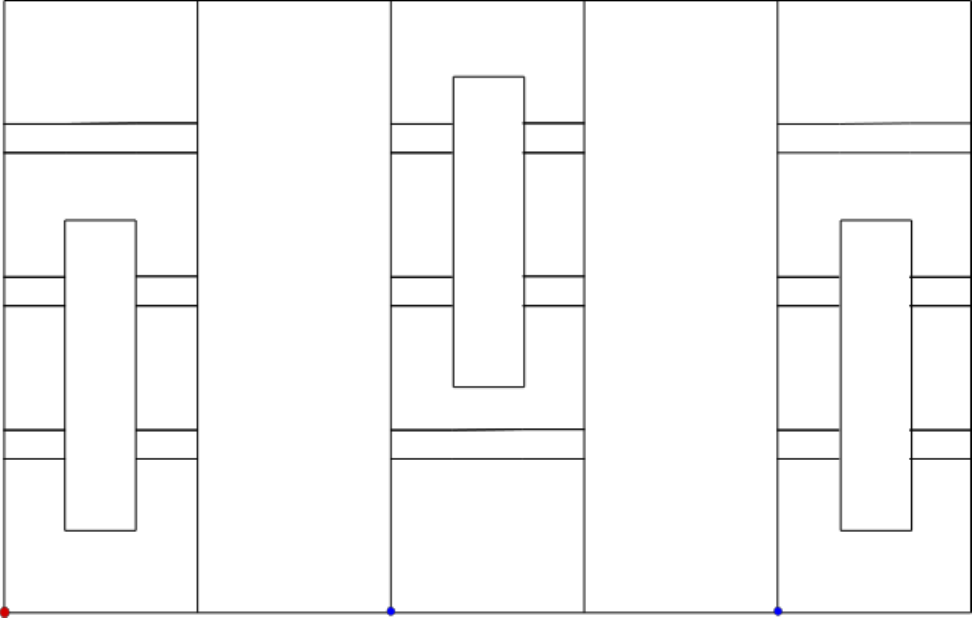}};
     \node at (1.3,3.1){\large $K_1$};
     \node at (3.6,3.8){\huge $J_1$};
     \node at (6.2,5){\large $K_2$};
     \node at (8.6,3.8){\huge $J_2$};
     \node at (11.1, 3.1){\large $K_3$};
\end{tikzpicture}
    \caption[Schematic diagram of the proof of Theorem \ref{main1}]{A schematic for the proof of Theorem \ref{main1} where the red point represents the basepoint we want to make our calculations from and the blue represent the basepoints we use to make our initial calculations of the intersections of $K_i$ before we then rebase and use the isomorphism induced by the stack of singular pure braids to the left of the basepoint.}
    \label{fig:schemtatic}
\end{figure}

 This result gives the image of $\widetilde{\sigma}^3$ for three-component link maps which are the result of connect sum of link maps which have two components which non-trivially link and another component split from those two which is trivial. One may wonder if all values of $k_i, k_i^\prime, l_i, l_i^\prime, m_i, m_i^\prime$ are possible. All values can be achieved because the welded crossing introduce no relations and you can always move strands into position using welded crossings, which do not add any extra information into the longitude in $\Gamma_i$.

 It follows from Theorem \ref{main1} that connect sum does not give a well defined group structure like in the two-component case for link maps.
\begin{cor}\label{tube}
There exists different choices of tubings for connect sum giving different resulting link homotopy classes in $\LM_{2,2,2}^4$.

\begin{proof}
Let $J$ be the braid described by Figure \ref{fig:PB} and let $X$ be the three-component $JK$ construction which keeps the $y$ component trivial and let $\overline{X}$ be the reversed mirror image of this $JK$ construction. Applying $\sigma^3$ to  the link maps $f_{XJ\overline{X}}$ and $f_{XJJ\overline{X}}$. From  Theorem \ref{main1} we have
\begin{align*}
\sigma^3\left(f_{XJ\overline{X}}\right)&=\left(z\left(s-1\right)+\overline{z}\left(s-1\right),0,x\left(1-u\right)+\overline{x}\left(1-\overline{u}\right)\right)\\
\sigma^3\left(f_{XJJ\overline{X}}\right)&=\left(z\left(s^2-1\right)+\overline{z}\left(\overline{s}^2-1\right),0,x\left(1-u^2\right)+\overline{x}\left(1-\overline{u}^2\right)\right).
\end{align*}
 Using Example \ref{examp}, we can see that these link maps are distinct from one another.
\end{proof}
\end{cor}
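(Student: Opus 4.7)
My plan is to exhibit two concrete link maps arising as connect sums with different tubings and show that $\widetilde\sigma^3$ separates them. The two auxiliary annular link maps will be the three-component JK construction $X$ based on the Whitehead link with the $y$-component kept trivial (i.e.\ a product arc split off from the Whitehead pair) and $\overline X$ its reverse mirror; together with the ribbon braid $J$ of Figure~\ref{fig:PB}, whose welded braid description has the $x$-strand passing once around the $y$-strand with a single classical crossing. The closures $f_{XJ\overline X}$ and $f_{XJJ\overline X}$ are both connect sums of the Fenn--Rolfsen link map $FR$ with a trivially embedded $y$-sphere, but with connecting tubes that loop around $y$ a different number of times; this is exactly the choice of tubing in the connect sum that we wish to distinguish.

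Next, I apply Theorem~\ref{main1} to each stacking. For $XJ\overline X$, the only nontrivial crossings in the welded braid for $J_1=J$ contribute $k_2^\prime = 1$ on the $x$ longitude and $m_2^\prime = 1$ on the $z$ longitude, while all other bracketing parameters vanish (since the trivial $y$-component participates in no classical crossings). Substituting into the formulas of Theorem~\ref{main1}, and recalling that for each three-component JK construction the relevant two-component sublink Kirk invariant equals the Fenn--Rolfsen value $z(t-1)+z^{-1}(t^{-1}-1)$ (and its analogue on the $z$-component), yields
\[
\sigma^3(f_{XJ\overline X}) = \bigl(z(s-1)+\overline z(\overline s-1),\; 0,\; x(1-u)+\overline x(1-\overline u)\bigr).
\]
For $XJJ\overline X$, the two copies of $J$ double the relevant crossing counts (so $k_2^\prime$ and $m_2^\prime$ become $2$), and the same substitution yields
\[
\sigma^3(f_{XJJ\overline X}) = \bigl(z(s^2-1)+\overline z(\overline s^2-1),\; 0,\; x(1-u^2)+\overline x(1-\overline u^2)\bigr).
\]

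Finally, I must argue that these two triples lie in distinct orbits under the $\mathbb Z^6$-action of Definition~\ref{action}. By the action, any change of basing paths transforms a term $y^i s^j z^k$ by shifting only the exponent of $s$ by an integer linear combination of $i$ and $k$. In the first component of either triple, the only nontrivial terms have $(i,k)=(0,\pm 1)$, so the admissible shift of the $s$-exponent is independent of $j$ and sends $\{1, s, s^{-1}, 1\}$ rigidly by a single integer $n$, giving sets of $s$-powers $\{s^n, s^{n+1}, s^{-1-n}, s^{-n}\}$. Requiring this to match the $s$-power multiset $\{s^{n'}, s^{n'+2}, s^{-2-n'}, s^{-n'}\}$ coming from the second triple forces conditions on $n,n'$ that have no integer solution, so the two orbits in $\widetilde K$ are distinct and consequently $f_{XJ\overline X}$ and $f_{XJJ\overline X}$ are not link homotopic. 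Equivalently, one can appeal directly to Example~\ref{examp}, where $\overline\mu$ is shown to distinguish precisely these two classes.

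The main obstacle I anticipate is not the algebraic bookkeeping in Theorem~\ref{main1}, which is mechanical, but justifying that $f_{XJ\overline X}$ and $f_{XJJ\overline X}$ really represent two honest connect sums of $FR$ with an unknotted sphere that differ only in the choice of tubing: one has to see that isotoping the JK-pair of $X\overline X$ together while the $y$-sphere sits between collapses the construction to a connect sum, and that the intervening copy or copies of $J$ correspond to winding the connecting tube around $y$. Once this geometric identification is in hand, the computation of $\widetilde\sigma^3$ and the distinctness argument are straightforward.
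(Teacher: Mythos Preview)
Your approach is essentially identical to the paper's: you use the same stacking constructions $XJ\overline X$ and $XJJ\overline X$, compute $\sigma^3$ via Theorem~\ref{main1}, and distinguish the results via Example~\ref{examp} (you also sketch an equivalent direct orbit argument). One small slip in the bookkeeping: for the braid $J$ of Figure~\ref{fig:PB} the induced automorphism is $\psi(x)=y^{-1}xy$, $\psi(y)=y$, $\psi(z)=z$, so the nontrivial longitude parameter is $k_2$ (the $x$-longitude links the $y$-component), not $k_2'$ and $m_2'$ as you wrote; correcting this feeds into the $y^{k_i-m_i'}$-conjugation term in Theorem~\ref{main1} and produces the same final values you state.
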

 It is worth noting that Theorem \ref{main1} and the proof of Proposition \ref{commute} show that the group of embedded annular link map up to link homotopy is not a normal subgroup of the group of annular link maps. 
 This is because if we take $J\in \ESL_{2,2,2}^4$ such that 
 \[
 \Theta(J)=\left((0,0,0), \eta\right)
 \]
 with $eta\neq\Id$. Let $X\in SL_{2,2,2}^4$ such that 
 \[
 \Theta(X)=\left((\theta_x(X), \theta_y(X), \theta_z(X)), \Id\right)
 \]
 with $(\theta_x(X), \theta_y(X), \theta_z(X))\neq(0,0,0)$. Then we can arrange that 
 \[
 \Theta(XJ\overline{X})=\left(a, \eta\right)\in L
 \]
 such that $a\neq 0$. However the $\ESL_{2,2,2}^4$ is contained in the following normal subgroup.
 \begin{prop}
Let $N$ be the set of annular link maps such that their link map closures are Brunnian. Then $N$ is a normal subgroup of the group of singular link concordances.
\begin{proof}
We first show $N$ is a subgroup and then prove that it is normal. 
Let $Y_1$ and $Y_2$ be annular link maps whose link map closures are Brunnian. We will show that $f_{Y_1Y_2}$ is a Brunnian link map. As $\Theta$ is a homomorphism
\[
\Theta\left(Y_1 Y_2\right)=\Big(\left(\theta_x(Y_1), \theta_y(Y_1), \theta_z(Y_1)\right)+\Phi(\eta(Y_1))\left(\left(\theta_x(Y_2), \theta_y(Y_2), \theta_z(Y_2)\right)\right), \eta(Y_1)\circ\eta(Y_2)\Big)\text{.}
\]
Taking the link map closure we know that for some choice of basing paths we have
\[
\widetilde{\sigma}^3\left(f_{Y_1Y_2}\right)=\left(\theta_x(Y_1), \theta_y(Y_1), \theta_z(Y_1)\right)+\Phi(\eta(Y_1))\left(\left(\theta_x(Y_2), \theta_y(Y_2), \theta_z(Y_2)\right)\right)\text{}
\]
where we have added using the basings provided by the annular link map.
Recall the map $p_i:\widetilde{K}\rightarrow \left(\mathbb{Z}\left[\mathbb{Z}\right]\right)^2$ where we project onto the factors which aren't $i$ and in each component set the $i$th meridian equal to $1$. We will do the case for $i=3$ as the other cases are similar
We have that
\[
p_3\left(\widetilde{\sigma}^3\left(f_{Y_1Y_2}\right)\right)=\left(\sigma_1(f_{Y_1^3})+\sigma_1(f_{Y_2^3}), \sigma_2(f_{Y_1^3})+\sigma_2(f_{Y_2^3})\right)
\]
Since the link map closures of each annular link map is trivial the terms in each sum is zero and by the injectivity of the Kirk invariant $Y_1Y_2$ is in $N$.

Suppose that $Y\in N$, we will show that $\overline{Y}\in N$. Note that
\[
\widetilde{\sigma}^3\left(f_{\overline{Y}}\right)=\Phi\left(\eta(Y)^{-1}\right)\left(-\theta_x(Y),-\theta_y(Y), -\theta_z(Y)\right)
\]
Applying the map $p_i$ for each $i$ and using the injectivity of the Kirk invariant we have $\overline{Y}\in N$.

As the trivial annular link map is clearly in $N$ we have shown that $N$ is a subgroup.

We now show that $N$ is normal. Let $X\in \SL_{2,2,2}^4$ and $Y\in N$. We will just focus on the intersection information of $XY\overline{X}$ since the automorphism data is not relevant when we map to $\LM_{2,2}^4$. The intersection information is given by
\[
\left(\theta_x(X),\theta_y(X), \theta_z(X)\right)+\Phi\left(\eta(X)\right)\left(\theta_x(Y),\theta_y(Y), \theta_z(Y)\right)+\Phi\left(\eta(XY)\right)\left(\theta_x(\overline{X}),\theta_y(\overline{X})), \theta_z(\overline{X})\right)\text{.}
\]
Since 
\[
\left(\theta_x(\overline{X}), \theta_y(\overline{X}), \theta_z(\overline{X})\right)=-\Phi(\eta(X)^{-1})\left(\left(\theta_x(X), \theta_y(X), \theta_z(X)\right)\right)
=-\Phi(\eta(\overline{X}))\left(\left(\theta_x(X), \theta_y(X), \theta_z(X)\right)\right)\text{,}
\]
the intersection information becomes
\[
\left(\theta_x(X),\theta_y(X), \theta_z(X)\right)+\Phi\left(\eta(X)\right)\left(\theta_x(Y),\theta_y(Y), \theta_z(Y)\right)-\Phi\left(\eta(XY\overline{X})\right)\left(\theta_x(X),\theta_y(X)), \theta_z(X)\right)
\]
We now write
\[
p_3\left(\widetilde{\sigma}^3\left(f_{XY\overline{X}}\right)\right)=\left(\sigma_1(X^3)-\sigma_1(X^3), \sigma_2(X^3)-\sigma_2(X^3)\right)=(0,0).
\]

By injectivity of the Kirk invariant removing the third component gives a trivial link map. A similar result can be shown for removing the first and second component and this proves the result.
\end{proof}
 \end{prop}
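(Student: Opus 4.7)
The plan is to leverage the commutative diagram (\ref{invcom}) to reduce Brunnian-ness to a vanishing condition on the projections $p_i\circ\widetilde{\sigma}^3$, and then verify closure under products, inverses, and conjugation using the homomorphism property of $\Theta$.

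The first step is a key lemma: for every $\psi\in\Aut(MF(3))$ and every $i\in\{1,2,3\}$, the equality $p_i\circ\Phi(\psi)=p_i$ holds as maps $K\to(\mathbb{Z}[\mathbb{Z}])^2$. The point is that $p_i$ factors through abelianising the two surviving factors $\Gamma_j$ and then setting the $i$th meridian to $1$. The action $\Phi(\psi)$ on each $\Gamma_j$ sends $a$ to $[\tau_j]\psi_j(a)[\tau_j]^{-1}$, where $\psi_j$ itself only conjugates the meridians of $\Gamma_j$ by elements of $\Gamma_j$. In an abelian quotient both outer and inner conjugations collapse, proving the lemma. Combined with the injectivity of the Kirk invariant (Theorem \ref{clasif}) and the outer rectangle of (\ref{invcom}), this reduces $Y\in N$ to the condition $p_i\circ\widetilde{\sigma}^3(f_Y)=0$ for each $i\in\{1,2,3\}$.

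For the subgroup axioms, I would expand $\Theta(Y_1Y_2)$ via the semidirect multiplication in $L$: its $K$-component is $\Theta(Y_1)_K+\Phi(\eta(Y_1))\Theta(Y_2)_K$. Applying $p_i$ and using the key lemma to trivialise $\Phi(\eta(Y_1))$ yields $p_i(\Theta(Y_1)_K)+p_i(\Theta(Y_2)_K)=0$ whenever $Y_1,Y_2\in N$. For inverses, the identity $\Theta(Y\overline{Y})=(0,\Id)$ forces $\Theta(\overline{Y})_K=-\Phi(\eta(Y))^{-1}\Theta(Y)_K$; the key lemma then gives $p_i\circ\widetilde{\sigma}^3(f_{\overline{Y}})=-p_i(\Theta(Y)_K)=0$.

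For normality, let $X\in\SL_{2,2,2}^4$ be arbitrary and $Y\in N$. Expanding $\Theta(XY\overline{X})$ I obtain
\[
\Theta(XY\overline{X})_K=\Theta(X)_K+\Phi(\eta(X))\Theta(Y)_K+\Phi(\eta(X))\Phi(\eta(Y))\Theta(\overline{X})_K.
\]
Substituting $\Theta(\overline{X})_K=-\Phi(\eta(X))^{-1}\Theta(X)_K$, applying $p_i$, and using the key lemma to collapse every $\Phi(\cdot)$ factor, the two $X$-contributions cancel and I am left with $p_i(\Theta(Y)_K)=0$, so $XY\overline{X}\in N$. The main obstacle I anticipate is the key lemma itself: one must trace carefully through the definition of $\psi_j$, including its action on the central commutator generator ($s$, $t$, or $u$) of $\Gamma_j$, to confirm that everything truly becomes trivial after abelianising and killing the $i$th meridian. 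Once this is dispatched the rest is pure bookkeeping inside the semidirect product.
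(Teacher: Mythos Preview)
Your proposal is correct and follows essentially the same route as the paper: expand via the homomorphism property of $\Theta$, pass to the closure, apply $p_i$, and invoke injectivity of the Kirk invariant. The one difference is that you isolate the identity $p_i\circ\Phi(\psi)=p_i$ as an explicit lemma, whereas the paper uses it tacitly when it jumps directly from the expanded $K$-component to expressions like $\sigma_1(f_{Y_1^3})+\sigma_1(f_{Y_2^3})$ and $\sigma_1(X^3)-\sigma_1(X^3)$; making this step explicit is a genuine improvement in clarity but not a different argument.
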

Hence, the normal closure of the set of three-component embedded link maps is contained within $N$.

\section{$n$-component link maps}\label{chap7}
We now define an $n$-component link map homotopy invariant. We will mostly sketch out the process and not prove things explicitly since the proofs will be analogous to the three-component case. We first need to prove the following proposition.
\begin{prop}
The fundamental group complement of  $k$ generically immersed spheres given by a link map is isomorphic to the free Milnor group on $k$ generators, where a meridian of each sphere is a generator.
\end{prop}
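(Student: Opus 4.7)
The plan is to adapt the proof strategy of Lemma \ref{stringlinkmil} (which handles annular link maps in $B^3 \times I$) to the setting of closed sphere link maps in $S^4$. Write $f = f_1 \sqcup \cdots \sqcup f_k$ for the generic link map. First I would invoke the analogue of Lemma \ref{fmr}: by applying finitely many self-finger moves on each component $f_i$, using arcs disjoint from the other components, one replaces $f$ by a link-homotopic map $\tilde f$ for which $\pi_1(S^4 \setminus \nu(\tilde f))$ already equals $M\pi_1(S^4 \setminus \nu(f))$. This works because Milnor groups are nilpotent and, being normally generated by the $k$ meridians via Lemma \ref{normgen}, finitely presented, so only finitely many finger moves are required to realise all Milnor relations.

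Next I would pick, by a general position argument, an equatorial $S^3 \subset S^4$ meeting each $\tilde f_i(S^2)$ transversely in a single circle, yielding a decomposition $S^4 = D^4_+ \cup_{S^3} D^4_-$ in which each sphere $\tilde f_i$ is cut into two singular discs $\Delta^\pm_i \subset D^4_\pm$ whose boundaries together form a $k$-component unlink $O_k \subset S^3$. Seifert-Van Kampen then expresses
\begin{equation*}
\pi_1(S^4\setminus\nu(\tilde f)) \cong \pi_1(D^4_+ \setminus \nu(\Delta^+)) *_{F(k)} \pi_1(D^4_- \setminus \nu(\Delta^-)),
\end{equation*}
amalgamated over $\pi_1(S^3 \setminus \nu(O_k)) \cong F(k)$, with both inclusion maps identifying the free generators of $F(k)$ with the meridians of $O_k$.

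Applying the result of Freedman and Teichner from \cite{DT} already cited in Lemma \ref{stringlinkmil}, the Milnor group of the complement in $D^4$ of any system of singular slice discs bounding $O_k$ is the free Milnor group $MF(k)$. Taking Milnor quotients of the above amalgamated free product, and using that the Milnor quotient is a left adjoint to the inclusion of Milnor groups into groups (and hence preserves pushouts), gives $M\pi_1(S^4 \setminus \nu(f)) \cong MF(k) *_{F(k)} MF(k)$, with both amalgamation maps equal to the canonical quotient $F(k) \twoheadrightarrow MF(k)$. Since each generator of $F(k)$ maps to the same meridian in both copies, identifying these generators and absorbing duplicated Milnor relations collapses the pushout to a single $MF(k)$, yielding the desired isomorphism with meridians mapping to the free generators. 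The main obstacle will be justifying that the Milnor quotient interacts with the pushout in this way: the Milnor relations depend on a choice of normal generators, and one must verify that the meridians of the $f_i$ survive the amalgamation as the correct set of normal generators for the combined group -- this is precisely what the Seifert-Van Kampen identification of meridians across $S^3$ guarantees, but writing it out carefully is the only delicate step.
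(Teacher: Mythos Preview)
Your overall strategy matches the paper's: do finger moves to realise the Milnor quotient, split $S^4$ along an equatorial $S^3$ into two $D^4$'s each containing singular slice discs, apply the Freedman--Teichner result to each half, and reassemble via Seifert--Van Kampen. However, there is a genuine gap in how you produce the equatorial splitting.

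General position does \emph{not} give you an $S^3$ meeting each $\tilde f_i(S^2)$ in a single circle, nor does it give you an unlink. Transversality only tells you that $S^3 \cap \tilde f_i(S^2)$ is a closed $1$-manifold; it could consist of several circles, and even if each component contributes exactly one circle, those circles could form a nontrivial link in $S^3$. Both conditions are essential: without a single circle per component you do not get discs on each side, and without the unlink condition you cannot identify $\pi_1(S^3 \setminus \nu(O_k))$ with $F(k)$, so the amalgamation is over the wrong group and Freedman--Teichner does not apply. The paper handles this by an explicit link homotopy: first isotope the entire link map into the southern hemisphere, then push one thin finger from each sphere up through the equator. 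This guarantees by construction that the equatorial slice is one unknotted circle per component, hence an unlink.

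A smaller point: your categorical claim that the Milnor quotient is a left adjoint preserving pushouts is shaky, as you note, because the Milnor quotient depends on a choice of normal generators. The paper sidesteps this entirely by doing further finger moves on the discs in each $D^4$ so that $\pi_1(D^4_\pm \setminus \nu(\Delta^\pm))$ is already the free Milnor group; then Seifert--Van Kampen directly gives $MF(k) *_{F(k)} MF(k)$, which collapses to $MF(k)$ because both amalgamating maps are the same surjection. Alternatively, your version can be rescued without the adjoint language: the maps $A \to MA \cong MF(k)$ and $B \to MB \cong MF(k)$ agree on $F(k)$, so they induce a retraction $G = A *_{F(k)} B \to MF(k)$ splitting the surjection $MF(k) \to MG$, which forces $MG \cong MF(k)$.
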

\begin{proof}
Let $f$ be an $k$-component link map and apply finger moves to arrange that the complement is a Milnor group. Using compactness we can apply a link homotopy to $f$ which moves it it below into the southern hemisphere of $S^4$. For each component of $f$, do a link  homotopy which takes a finger from each sphere and brings up to the northern hemisphere and the intersection with the equatorial $S^3$ is a single connected component, resulting in $n$-component unlink at the equator.

We then cut along the equator and this decomposes $S^4$ into two $D^4$ both with the same $n$-component unlink in their boundary $S^3$, where each component bounds an immersed disc in the four-ball and does not intersect any other component. On both discs we do finger moves to make the fundamental group of the complement a Milnor group. Using a result from \cite{DT} we know that the fundamental group of each of these complements is Milnor free group on $k$ generators. Using Seifert-Van Kampen this proves the result.
\end{proof} 
Let $f=f_1\sqcup \ldots f_n:S^2_1\coprod\ldots\coprod S^2_n\rightarrow S^4$ be a based link map. For each basing path we associate a meridian of the component , similar to the three-component case. Call these meridians $x_1,\ldots x_n$. Let
\[
\tau=\left(n, n-1,\ldots, 2,1\right)\in \mathcal{S}_n,
\]
recall $\mathcal{S}_n$ is the symmetric group on $n$ variables. Then define
\[
\Gamma_i:=\left<x_{\tau^{i-1}\left(1\right)}, x_{\tau^{i-1}\left(2\right)},\ldots, x_{\tau^{i-1}\left(n\right)}\mid x_{\tau^{i-1}\left(1\right)},\, r_1,\, r_2,\, r_3,\ldots,\, r_s\right>
\]
where $r_1,\ldots, r_s$ are the minimal relations for the free Milnor group on $n$ generators. Clearly each $\Gamma_i$ is the Milnor group on $n-1$ generators

\begin{defn}
Let $f=f_1\sqcup\ldots\sqcup f_n:S^2_1\coprod\ldots\coprod S^2_n\rightarrow S^4$ be a based link map then we define
\[
\sigma^n(f)=\left(M\left(\lambda\left(f_1, f_1\right)\right),\ldots, M\left(\lambda\left(f_i, f_i\right)\right),\ldots, M\left(\lambda\left(f_n f_n\right)\right)\right)\in \prod_{i=1}^n\mathbb{Z}\Gamma_i\text{.}
\] 
\end{defn}

\begin{prop}
Let $f$ and $g$ be link homotopic based link maps. Then
\[
\sigma^n(f)=\sigma^n(g)\text{.}
\]

\begin{proof}
The proof of this result is analogous to the proof of Proposition \ref{sigma}.
\end{proof}
\end{prop}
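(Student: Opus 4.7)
The plan is to mimic the proof of Proposition \ref{sigma} componentwise. By Proposition \ref{FQ}, any generic link homotopy $F$ from $f$ to $g$ decomposes as a concatenation of isotopies, finger moves, Whitney moves, and cusp homotopies. Since $\sigma^n$ is assembled from the $n$ entries $M(\lambda(f_i,f_i)) \in \mathbb{Z}\Gamma_i$, it suffices to verify that each of these entries is invariant under every move in the list.

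First I would dispose of cusps and isotopies: isotopies trivially preserve $\lambda$, and cusp homotopies change only the Euler number of the normal bundle without affecting $\lambda(f_i,f_i)$ or the isomorphism type of the complement, exactly as in the three-component case. By the generalised version of Proposition \ref{milncomp} established above (the fundamental group of the complement of the other $n-1$ spheres Milnor-reduces to $MF(n-1) \cong \Gamma_i$), the target rings $\mathbb{Z}\Gamma_i$ themselves are unchanged throughout a generic link homotopy, so it makes sense to compare intersection elements across the homotopy.

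Next, I would handle finger moves. A finger move performed on component $i$ does not change $\lambda(f_i,f_i)$ because $\lambda$ is a homotopy invariant, and the target $\Gamma_i$ is insensitive to self-finger moves on the $i$th sphere. A finger move performed on a component $j \neq i$ alters $\pi_1(S^4 \smallsetminus (\bigsqcup_{k\ne i} \nu(f_k)))$ by imposing a relation of the form $[x_j, g x_j g^{-1}] = 1$ for some $g$, i.e.\ precisely a relation that already holds in the Milnor quotient $\Gamma_i$. The commutative square
\[
\begin{tikzcd}
\pi_1(S^4 \smallsetminus \bigsqcup_{k\neq i}\nu(f_k))\arrow[r]\arrow[d] & M\pi_1(S^4 \smallsetminus \bigsqcup_{k\neq i}\nu(f_k)) \arrow[d,"\Id"]\\
\pi_1(S^4 \smallsetminus \bigsqcup_{k\neq i}\nu(f_k))/\langle\!\langle [x_j, gx_jg^{-1}]\rangle\!\rangle \arrow[r] & M\bigl(\pi_1(S^4 \smallsetminus \bigsqcup_{k\neq i}\nu(f_k))/\langle\!\langle [x_j, gx_jg^{-1}]\rangle\!\rangle\bigr)
\end{tikzcd}
\]
shows that the group elements read off from the intersections of $f_i$ with itself map to the same elements of $\Gamma_i$ before and after the move. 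Hence each entry $M(\lambda(f_i,f_i))$ is invariant under any finger move on any component.

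Finally, I would follow the same trick as in Proposition \ref{sigma} to handle Whitney moves: since cusps may be introduced freely without changing $\sigma^n$, I may assume $f$ and $g$ have the same normal Euler numbers, so $F$ may be taken regular; then reorder so that all finger moves occur in $t \in (0,\tfrac{1}{2})$ and all Whitney moves in $t \in (\tfrac{1}{2},1)$. By the previous paragraph, $\sigma^n(f) = \sigma^n(F(-,\tfrac{1}{2}))$. Running $F$ backwards from $g$ to $F(-,\tfrac{1}{2})$ expresses that intermediate map as the result of finger moves applied to $g$, so $\sigma^n(g) = \sigma^n(F(-,\tfrac{1}{2}))$ as well, and transitivity finishes the argument. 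The main obstacle is merely notational bookkeeping: one must be careful that the isomorphisms $M\pi_1(S^4 \smallsetminus \bigsqcup_{k\neq i} \nu(f_k)) \cong \Gamma_i$ used at each stage of the homotopy are compatible with the fixed basing and choice of meridians, but this follows from the generalised Proposition \ref{milncomp} together with the fact that basings are not altered by any of the four moves.
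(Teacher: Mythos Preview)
Your proposal is correct and follows exactly the approach the paper intends: it is the direct $n$-component analogue of the proof of Proposition~\ref{sigma}, checking invariance under isotopies, cusps, and finger moves via the same commutative square (with $\Gamma_i$ in place of $\Gamma_x$), and then handling Whitney moves by the reordering-and-reversal trick. The paper's own proof consists of a single sentence pointing to Proposition~\ref{sigma}, so you have simply written out what that sentence abbreviates.
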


We now find the correct quotient to consider which removes the dependence on basing and gives a unbased link map invariant.

Choose another set of basing paths for $f$, similarly to the three component case, this pair of basing curves specifies elements of an $n$-tuple of $\left(g_1,g_2,\ldots,g_n\right)\in\prod_{i=1}^n\Gamma_i$. Let $\tilde{g}_i\in \Gamma_j$ where $i\neq j$ where $\tilde{g}_i$ is defined to be the image of $g_i$ under the sequence of maps
\[
\Gamma_i\twoheadrightarrow \Gamma_i/\left<\left<x_j\right>\right>\hookrightarrow \Gamma_j\text{,}
\]
where the left most arrow is the quotient map and the right most map is the natural inclusion map. The notation omits which generator we have omitted, however it will be clear from the context. We define the following map $\psi_{g_i}:\mathbb{Z}\Gamma_i\rightarrow \mathbb{Z}\Gamma_i$ where $\psi_{g_i}(r)=g_irg_i^{-1}$. Furthermore, let $\phi^i_{g_j}:\mathbb{Z}\Gamma_i\rightarrow \mathbb{Z}\Gamma_i$ with
\[
\phi^i_{g_j}(x_k)=\begin{cases}
\tilde{g}_j^{-1}x_k\tilde{g}_j & k=i\\
x_k &\text{ otherwise }
\end{cases}
\]
extended linearly. Define $w_i:\mathbb{Z}\Gamma_i\rightarrow \mathbb{Z}\Gamma_i$ where 
\[
w_i:=\psi_{g_{\tau^{i-1}\left(1\right)}}\circ\phi^i_{g_{\tau^{i-1}\left(2\right)}}\circ\phi^i_{g_{\tau^{i-1}\left(3\right)}}\circ\ldots\circ \phi^i_{g_{\tau^{i-1}\left(n\right)}}\text{.}
\]
Define an action by 
\[
\left(g_1,\ldots g_n\right)\cdot \left(r_1,\ldots,r_n\right)=\left(w_1\left(r_1\right), w_2\left(r_2\right),\ldots,w_n\left(r_n\right)\right)\text{.}
\]
This action corresponds to the changing basing paths similarly to the action defined in Lemma \ref{act} in the three-component case. We define our $n$-component invariant.
\begin{defn}
Let $f$ be an $n$-component link map. Choose a collection of basing paths for each sphere. Define
\[
\sigma_i\left(f\right):=M\left(\lambda\left(f_i, f_i\right)\right)\text{.}
\]
Define
\[
\widetilde{\sigma}^n\left(f\right)=\left(\sigma_1\left(f\right),\ldots\sigma_n\left(f\right)\right)\in\prod_{i=1}^n\mathbb{Z}\Gamma^i/\sim\text{.}
\]
\end{defn}
Let $1\leq i_1<\cdots <i_k\leq n$ let us define the map 
\[
\left(i_1,\ldots, i_k\right):\LM_{\underbrace{2,\ldots, 2}_{n}}^4\rightarrow \LM_{\underbrace{2,\ldots, 2}_{n-k}}^4
\]
where $\left(i_1,\ldots, i_k\right)(f)$ is the link map $f$ where we forget about each $i_j$ sphere. Define the $p_{\left(i_1,\ldots, i_k\right)}:\prod_{i=1}^n\mathbb{Z}\Gamma^i/\sim\rightarrow \prod_{i=1}^{n-k}\mathbb{Z}\Gamma^i/\sim$ to be the projection onto the factors which are not one of the $i_j$ and set all $x_{i_j}=1$ and relabel. 

\begin{prop}
The following diagram commutes
\[
\begin{tikzcd}
\LM_{2,\ldots, 2}^4\arrow[d, "\widetilde{\sigma}^n"]\arrow[r, "\underline{i}"]&\LM_{2,\ldots, 2}^4\arrow[d, "\widetilde{\sigma}^{n-k}"]\\
\prod_{i=1}^n\mathbb{Z}\Gamma^i/\sim\arrow[r]&\prod_{i=1}^{n-k}\mathbb{Z}\Gamma^i/\sim\text{,}
\end{tikzcd}
\]
where $\underline{i}:=\left(i_1,\ldots, i_k\right)$\text{.}
\end{prop}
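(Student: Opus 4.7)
The plan is to reduce the statement to a geometric/group-theoretic observation about inclusions of complements, essentially generalising Lemma \ref{containment} from the three-component case.

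First I would unpack the bottom horizontal arrow. For each index $i \notin \{i_1,\ldots,i_k\}$, removing the spheres indexed by $i_1,\ldots,i_k$ from the link map enlarges the ambient complement: there is an inclusion
\[
S^4\setminus\nu(f)\hookrightarrow S^4\setminus\nu(\underline{i}(f)).
\]
Each meridian $x_{i_j}$ of a removed component is a small loop around $f_{i_j}$, which becomes null-homotopic in the larger complement (it bounds a meridional disc once $f_{i_j}$ is no longer excluded). The meridians of the retained components map to the corresponding meridians. So the induced map on fundamental groups is a surjection sending $x_{i_j}\mapsto 1$ for $j=1,\ldots,k$. Passing to Milnor quotients (which is functorial) and then to group rings, this is exactly the map $p_{(i_1,\ldots,i_k)}$ restricted to each $\Gamma^i$ with $i\notin\{i_1,\ldots,i_k\}$, after identifying the resulting group with the free Milnor group on the remaining generators and relabelling.

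Next I would verify commutativity at the level of the \emph{based} invariant $\sigma^n$, before descending to the equivalence classes. Choose basing paths for $f$, which by restriction also serve as basing paths for $\underline{i}(f)$ and determine the same meridians for the retained components. For $i\notin\{i_1,\ldots,i_k\}$, the geometric intersection $\lambda_{\text{geo}}(f_i,f_i)$ is computed from exactly the same self-intersections of $f_i$, with exactly the same loops $g_p$, just read in the larger complement. Hence $\lambda(f_i,f_i)$ for $\underline{i}(f)$ is the image of $\lambda(f_i,f_i)$ for $f$ under the above inclusion-induced map on $\mathbb{Z}\pi$, and the same relation persists after taking Milnor quotients. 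This is precisely $p_{(i_1,\ldots,i_k)}\circ\sigma^n(f)=\sigma^{n-k}(\underline{i}(f))$ in the $i$-th coordinate, for each retained $i$.

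Finally, I would descend to the unbased invariant. The action of changing basing paths, from Definition \ref{action} (suitably generalised to $n$ components), transforms each coordinate by conjugation and by the $\phi^i_{g_j}$ operations encoding linking with the other components. When $p_{(i_1,\ldots,i_k)}$ is applied, the contributions coming from the removed components are wiped out (their meridians become $1$), and the remaining action is precisely the corresponding action on $\prod_{i\notin\{i_1,\ldots,i_k\}}\mathbb{Z}\Gamma^i$ used to define the $(n-k)$-component quotient. Consequently $p_{(i_1,\ldots,i_k)}$ descends to a well-defined map between the orbit spaces, and the equality $p_{(i_1,\ldots,i_k)}\circ\widetilde{\sigma}^n=\widetilde{\sigma}^{n-k}\circ\underline{i}$ follows from the based version. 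The main obstacle, and the only point requiring real care, is tracking that the action on basings in the quotient defining $\prod_{i=1}^n\mathbb{Z}\Gamma^i/\!\!\sim$ is genuinely compatible with setting the removed meridians to $1$; but this is immediate from the definition of $\phi^i_{g_j}$, since $\phi^i_{g_{i_j}}$ becomes the identity after we kill $x_{i_j}$ in every factor.
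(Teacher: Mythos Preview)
The paper omits the proof of this proposition entirely (it reads ``The proof is omitted.''), and similarly gives no proof of the three-component analogue, Lemma~\ref{containment}. Your argument is correct and is exactly the one the paper is implicitly appealing to when it says throughout Section~\ref{chap7} that proofs are analogous to the three-component case: inclusion of complements kills the meridians of the removed spheres, the based invariant is computed from the same self-intersection loops read in the larger group, and the basing action descends because the operations $\phi^i_{g_{i_j}}$ become trivial once $x_{i_j}=1$. One small remark: in the paper's displayed definition of $\phi^i_{g_j}$ the condition ``$k=i$'' appears to be a typo for ``$k=j$'' (compare the effect of rebasing in Lemma~\ref{onebas}, where changing the $x$-basing alters the meridian $x$ as seen in $\Gamma_y$ and $\Gamma_z$); your reasoning is consistent with the intended meaning.
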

The proof is omitted.

\begin{bibdiv}
\begin{biblist}

\bib{A}{article}{
      author={Audoux, Benjamin},
      author={Bellingeri, Paolo},
      author={Meilhan, Jean-Baptiste},
      author={Wagner, Emmanuel},
       title={Homotopy classification of ribbon tubes and welded string links},
        date={2014},
     journal={Annali Della Scuola Normale Superiore Di Pisa-classe Di Scienze},
      volume={17},
       pages={713\ndash 761},
}

\bib{BT}{article}{
      author={Bartels, Arthur~C},
      author={Teichner, Peter},
       title={All two dimensional links are null homotopic},
        date={1999},
     journal={Geom. Topol.},
      volume={3},
      number={1},
       pages={235\ndash 252},
         url={https://doi.org/10.2140/gt.1999.3.235},
}

\bib{HB}{article}{
      author={Brendle, Tara},
      author={Hatcher, Allen},
       title={Configuration spaces of rings and wickets},
        date={200805},
     journal={Commentarii Mathematici Helvetici},
      volume={88},
}

\bib{Cas}{incollection}{
      author={Casson, Andrew~J.},
       title={Three lectures on new-infinite constructions in {$4$}-dimensional
  manifolds},
        date={1986},
   booktitle={\`a la recherche de la topologie perdue},
      series={Progr. Math.},
      volume={62},
   publisher={Birkh\"{a}user Boston, Boston, MA},
       pages={201\ndash 244},
        note={With an appendix by L. Siebenmann},
      review={\MR{900253}},
}

\bib{MP}{inproceedings}{
      author={Davis, Christopher},
      author={Nagel, Matthias},
      author={Orson, Patrick},
      author={Powell, Mark},
       title={Surface systems and triple linking numbers},
        date={2020},
}

\bib{FR}{article}{
      author={Fenn, Roger},
      author={Rolfsen, Dale},
       title={Spheres may link homotopically in {$4$}-space},
        date={1986},
        ISSN={0024-6107},
     journal={J. London Math. Soc. (2)},
      volume={34},
      number={1},
       pages={177\ndash 184},
         url={https://doi.org/10.1112/jlms/s2-34.1.177},
      review={\MR{859159}},
}

\bib{DT}{article}{
      author={Freedman, M.},
      author={Teichner, P.},
       title={4-manifold topology ii: Dwyer's filtration and surgery kernels},
        date={1995},
     journal={Inventiones mathematicae},
      volume={122},
       pages={531\ndash 557},
}

\bib{FQ}{book}{
      author={Freedman, Michael~H.},
      author={Quinn, Frank},
       title={Topology of 4-manifolds (pms-39)},
   publisher={Princeton University Press},
        date={1990},
         url={http://www.jstor.org/stable/j.ctt7ztz2d},
}

\bib{SG}{book}{
      author={Gompf, Robert~E.},
      author={Stipsicz, Andr\'{a}s~I.},
       title={{$4$}-manifolds and {K}irby calculus},
      series={Graduate Studies in Mathematics},
   publisher={American Mathematical Society, Providence, RI},
        date={1999},
      volume={20},
        ISBN={0-8218-0994-6},
         url={https://doi.org/10.1090/gsm/020},
      review={\MR{1707327}},
}

\bib{HL90}{article}{
      author={Habegger, Nathan},
      author={Lin, Xiaoxia},
       title={The classification of links up to link-homotopy},
        date={1990},
     journal={Journal of the American Mathematical Society},
      volume={3},
       pages={389\ndash 419},
}

\bib{PK}{inproceedings}{
      author={Kirk, Paul~A.},
       title={Link maps in the four sphere},
        date={1988},
   booktitle={Differential topology},
      editor={Koschorke, Ulrich},
   publisher={Springer Berlin Heidelberg},
     address={Berlin, Heidelberg},
       pages={31\ndash 43},
}

\bib{K}{article}{
      author={Koschorke, Ulrich},
       title={Link maps and the geometry of their invariants.},
        date={1988},
     journal={Manuscripta mathematica},
      volume={61},
      number={4},
       pages={383\ndash 416},
         url={http://eudml.org/doc/155322},
}

\bib{CGT}{book}{
      author={Magnus, Wilhelm},
      author={Karrass, Abraham},
      author={Solitar, Donald},
       title={Combinatorial group theory},
     edition={second},
   publisher={Dover Publications Inc., Mineola, NY},
        date={2004},
        ISBN={0-486-43830-9},
        note={Presentations of groups in terms of generators and relations},
      review={\MR{2109550}},
}

\bib{M21}{misc}{
      author={Meilhan, Jean-Baptiste},
      author={Yasuhara, Akira},
       title={Link concordances as surfaces in 4-space and the 4-dimensional
  milnor invariants},
        date={2021},
}

\bib{LGM}{article}{
      author={Milnor, John},
       title={Link groups},
        date={1954},
        ISSN={0003-486X},
     journal={Ann. of Math. (2)},
      volume={59},
       pages={177\ndash 195},
         url={https://doi.org/10.2307/1969685},
      review={\MR{71020}},
}

\bib{supsup}{book}{
      author={Ranicki, Andrew},
       title={Algebraic and geometric surgery},
      series={Oxford Mathematical Monographs},
   publisher={The Clarendon Press, Oxford University Press, Oxford},
        date={2002},
        ISBN={0-19-850924-3},
         url={https://doi.org/10.1093/acprof:oso/9780198509240.001.0001},
        note={Oxford Science Publications},
      review={\MR{2061749}},
}

\bib{AR}{book}{
      author={Ranicki, Andrew},
       title={Algebraic and geometric surgery},
      series={Oxford Mathematical Monographs},
   publisher={The Clarendon Press, Oxford University Press, Oxford},
        date={2002},
        ISBN={0-19-850924-3},
         url={https://doi.org/10.1093/acprof:oso/9780198509240.001.0001},
        note={Oxford Science Publications},
      review={\MR{2061749}},
}

\bib{ST}{article}{
      author={Schneiderman, Rob},
      author={Teichner, Peter},
       title={The group of disjoint 2-spheres in 4-space},
        date={201708},
     journal={Annals of Mathematics},
      volume={190},
}

\bib{Mythesis}{thesis}{
      author={Stirling, Scott~Speirs},
       title={Applications of noncommutative intersection forms to linking},
        type={Ph.D. Thesis},
        date={2022},
}

\end{biblist}
\end{bibdiv}

\end{document}